\documentclass[11pt]{article}
\usepackage[numbers,sort&compress]{natbib}
\usepackage{enumerate}
\usepackage{amscd}
\usepackage{amsmath}
\usepackage{latexsym}
\usepackage{amsfonts}
\usepackage{setspace}
\usepackage{amssymb}
\usepackage{amsthm}
\usepackage{verbatim}
\usepackage{mathrsfs}
\usepackage{enumerate}
\usepackage[hypertexnames=false]{hyperref}
\usepackage[utf8]{inputenc}
\usepackage{amsmath, amssymb, pgfplots}
\pgfplotsset{compat = newest, width = 12cm, height =12cm}
\usepackage{tikz}
\usepackage{caption}

\theoremstyle{plain}
\theoremstyle{definition}\newtheorem{theorem}{Theorem}[section]
\theoremstyle{definition}\newtheorem{lemma}[theorem]{Lemma}
\theoremstyle{plain}\newtheorem{coro}[theorem]{Corollary}
\theoremstyle{plain}\newtheorem{prop}[theorem]{Proposition}
\theoremstyle{definition}\newtheorem{remark}{Remark}[section]
\usepackage{xcolor}
\usepackage{mdframed}

\newcommand{\diver}{\text{div}\,}

\newcommand{\define}{\stackrel{\mathrm{def}}{=}}

\allowdisplaybreaks

\numberwithin{equation}{section}
\begin{document}
	\title{Norm Inflation for Inviscid and Fully Dissipative Boussinesq Systems in Supercritical Spaces}
	\author{Qionglei Chen\footnote{Institute of Applied Physics and Computational Mathematics, Beijing, 100088, China. Email: chen\_qionglei@iapcm.ac.cn}~~~\,\,\,\,Yaowei Xie\footnote{Institute of Applied Physics and Computational Mathematics, Beijing, 100088, China. Email: mathxyw@163.com}}
	\date{}
	\maketitle
	\begin{abstract}
We prove norm inflation, in the sense of strong ill-posedness, for the two-dimensional
Boussinesq system in supercritical Besov spaces. For the inviscid system, norm
inflation holds in
\(\dot B^\beta_{p,q}(\mathbb R^2)\times
\dot B^\beta_{p,r}(\mathbb R^2)\)
for \(\beta\neq0\), \(1<p\leq\infty\), \(1\leq q,r\leq\infty\), and
\(-2<\beta-\frac{2}{p}<1\). For the fully dissipative system, the same conclusion
holds in the range \(-2<\beta-\frac{2}{p}<-1\). In both cases, the results cover
almost all supercritical Besov spaces satisfying the local integrability
condition.
Norm inflation occurs in the density component \(\rho\), while the velocity
component \(u\) remains bounded. In the fully dissipative case, the inflation space is
supercritical for \(u\), but subcritical for \(\rho\) with respect to its own scaling.
This is not a contradiction: the density is transported by a velocity field in a supercritical regime, and this transport mechanism is precisely what produces norm inflation in \(\rho\).

	\end{abstract}
	\noindent {\bf MSC(2020):}\quad 35Q35, 35R25, 35A01.
	\vskip 0.02cm
	\noindent {\bf Keywords:} Boussinesq system; Norm inflation; Inviscid; Fully dissipative.
%
	
	\section{Introduction}
This paper is concerned with the following two-dimensional (2D) Boussinesq system
\begin{align}\label{boussinesq-orginal-0}
		\begin{cases}
		\partial_t u-\mu\Delta u+u \cdot \nabla u+\nabla p=\rho e_2,\\[1mm]
		\partial_t \rho-\nu\Delta \rho+u\cdot \nabla \rho=0,  \\[1mm]
		\text{div}\, u=0, \\[1mm]
		u(x, 0)=u_0(x),\,\, \rho(x, 0)=\rho_0(x),
	\end{cases}
\end{align}
where $u(x,t)\colon \mathbb{R}^2\times \mathbb{R}_+ \to \mathbb{R}^2$, 
$\rho(x,t)\colon \mathbb{R}^2\times \mathbb{R}_+ \to \mathbb{R}$, 
and $p(x,t)\colon \mathbb{R}^2\times \mathbb{R}_+ \to \mathbb{R}$ 
denote the velocity field, the density (or temperature), and the pressure, respectively. 
Here $\mu\geq 0$ and $\nu\geq 0$ are the viscosity and diffusion coefficients, respectively, and $e_2=(0,1)$ denotes the unit vector in the vertical direction. The Boussinesq equations form a classical model in fluid dynamics, arising in atmospheric fronts, oceanic circulation, Rayleigh--Bénard convection, and stratified flows, where the scalar quantity may represent either temperature or density variations; see, e.g., \cite{m-phys-boussinesq-Atmosphere and Ocean,v-phys-boussinesq-2017-large-scale}. 

In this paper, we focus on two choices of the parameters in \eqref{boussinesq-orginal-0}: 
$\mu=\nu=0$, corresponding to the inviscid Boussinesq system, and $\mu,\nu>0$, corresponding to the fully dissipative Boussinesq system. 
For the fully dissipative case, we restrict attention to the choice $\mu=\nu=1$. Both systems have been extensively studied over the past few decades. We next review the relevant literature and state our main results for each case.

\textbf{The inviscid case.}
If \(\rho\equiv0\), the inviscid Boussinesq system reduces to the classical
two-dimensional incompressible Euler equations. For the Euler equations, the local
well-posedness theory is closely related to the Lipschitz regularity of the velocity
field; see, for example,
\cite{k-KP-commutator-Euler,c-chae-Euler-local-besov}. The corresponding local
theory for the inviscid Boussinesq system was established in
\cite{c-CN-local-inviscid}. In particular, the inviscid Boussinesq system is locally
well posed in $B^{1+\frac dp}_{p,1}(\mathbb R^d)
\times
B^{1+\frac dp}_{p,1}(\mathbb R^d)$ with $1<p<\infty.$

The question of finite-time singularity formation for smooth finite-energy solutions
of the inviscid Boussinesq system on \(\mathbb R^2\) or \(\mathbb T^2\) remains widely
open. Existing finite-time blowup results rely mainly on boundaries, special geometric
structures, or suitable forcing mechanisms; see, for instance, the works of Elgindi
and Jeong \cite{e-EJ-boussinesq-finitetime-jiao}, Chen and Hou
\cite{c-CH-2021-boussinesq-half-plane,c-CH-2022-boussinesq-arxiv-self-similar},
and C{\'o}rdoba, La{\'{\i}}n-Sanclemente, and Mart{\'{\i}}nez-Zoroa
\cite{c-CLM-boussinesq-finitetime-singular-2d-Boussinesq}.

We next turn to ill-posedness phenomena. In the critical regularity regime,
Bourgain and Li \cite{b-BL-euler-2-illpose-bourgain-2015-IA} established strong
ill-posedness for the two-dimensional Euler equations in the critical Sobolev space
\(H^2(\mathbb R^2)\). Subsequent results on strong ill-posedness in critical spaces
can be found in
\cite{e-EM-elgindi2020infty,
	b-BL-euler-4-illpose-bourgain-2021-IMRN,
	m-MY-euler-illposed,
	e-EJ-euler-simper,
	k-KJ-Euler-simple illposed}.

For the inviscid Boussinesq system, the ill-posedness theory is much less developed.
Elgindi and Masmoudi \cite{e-EM-elgindi2020infty} proved mild ill-posedness near
the stably stratified steady state \(\rho_{\mathrm{eq}}=-y\) in the critical
\(L^\infty\) regime, namely for
\(\omega=\nabla\times u, \nabla\rho \in L^\infty\),
which corresponds to the Lipschitz regularity of \(u\) and \(\rho\). Their instability
mechanism is reflected in the growth of the vorticity \(\omega\). More recently,
Bianchini, Hientzsch, and Iandoli
\cite{b-BHL-strong-2d-infty-boussinesq-3dEuler}
proved strong ill-posedness in the same critical \(L^\infty\) framework, establishing
norm inflation for \(\partial_x\rho\) while keeping \(\omega\) and \(\partial_y\rho\)
bounded.

Recent years have also seen important progress on strong ill-posedness in
supercritical spaces. Luo
\cite{l-L-euler-10-illpose-lxyt-2024-arxiv}
proved strong ill-posedness for the three-dimensional Euler equations in
supercritical Sobolev spaces \(H^s(\mathbb R^3)\), \(0<s<\frac52\). In a different
direction, C{\'o}rdoba, Mart{\'{\i}}nez-Zoroa, and Oza{\'n}ski
\cite{c-CMO-euler-2d-Instantaneous gap loss of Sobolev regularity-24}
constructed solutions of the two-dimensional Euler equations with instantaneous
loss of Sobolev regularity in the supercritical range \(H^s(\mathbb R^2)\),
\(1<s<2\). Their construction also yields norm inflation as a byproduct.

These developments naturally lead to the study of strong ill-posedness for the
inviscid Boussinesq system in supercritical spaces. Compared with the Euler theory,
the inflation mechanism here is carried by the density component through the coupling
between velocity and density, rather than by the velocity field itself. The construction
also covers both positive and negative Besov regularities in the locally integrable
supercritical range.

More precisely, our first result establishes strong ill-posedness for the inviscid Boussinesq system in
\[
\dot B^\beta_{p,q}(\mathbb R^2)
\times
\dot B^\beta_{p,r}(\mathbb R^2),
\qquad
\beta\neq0,\quad
1<p\leq\infty,\quad 1\leq q,r\leq\infty,\quad
-2<\beta-\frac{2}{p}<1.
\]
This is made precise in the following theorem.

\begin{theorem}[Norm inflation for the inviscid Boussinesq system]\label{thm}
		Fix $1<p\leq\infty$ and $\beta\neq0$ such that
		$-2<\beta-\frac{2}{p}<1$. For any $\varepsilon>0$, there exists a solution
		$(u,\rho)$ to \eqref{boussinesq-orginal-0} in the inviscid case $\mu=\nu=0$, with initial data
		$(u_0,\rho_0)\in C_c^\infty(\mathbb R^2)$ such that
		\begin{align*}
			\|u_0\|_{\dot{B}^{\beta}_{p,1}}+\|\rho_0\|_{\dot{B}^{\beta}_{p,1}}< \varepsilon.
	\end{align*}
		Moreover, there exists a time $0 < t^* < \varepsilon$ such that
	\begin{align*}
		\|\rho(t^*)\|_{\dot{B}^{\beta}_{p,\infty}}>\frac{1}{\varepsilon}.
	\end{align*}
\end{theorem}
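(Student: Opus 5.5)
The plan is to exploit the fact that, in the inviscid system, $\rho$ is simply transported by $u$. A velocity field that is small in the supercritical norm $\dot B^\beta_{p,1}$ can still have a large Lipschitz norm. It can therefore stretch a small, highly oscillating density profile and move its frequency support, and because $\beta\neq0$ the $\dot B^\beta_{p,\infty}$ norm of $\rho$ changes by a factor $\lambda^{\beta}$ for every factor $\lambda$ of frequency shift. We also need $u$ to stay essentially stationary over the short time $t^*$, and we will arrange this.

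\textbf{Construction of $u_0$.} Take $u_0=\nabla^\perp\psi_0$ with $\psi_0$ compactly supported and equal to $A\,xy$ on a ball $B(0,R)$, where the cutoff is placed at scale $R$. Locally this is the hyperbolic flow $u_0=(-Ax,Ay)$, which is a steady solution of 2D Euler because the pressure $-\tfrac{A^2}{2}|x|^2$ compensates. Its Besov norm scales like $A R^{2-\beta+2/p}\cdot R^{-1}$, that is, $\|u_0\|_{\dot B^\beta_{p,1}}\lesssim A R^{1-\beta+2/p}$. Since $\beta-2/p<1$, we can choose $R$ small (equivalently, use a many-scale superposition) so that this is below $\varepsilon$ while $A$ is large, say $A t^*\sim\log N$. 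Then $\rho$ is evolved by flow lines that compress $x$ and expand $y$ by $e^{\pm At}$.

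\textbf{Construction of $\rho_0$ and the transport step.} Take $\rho_0=\delta\,\phi(x/r)\cos(Ny)$ supported well inside $B(0,R)$, with frequency $N$ and amplitude $\delta$ chosen so that $\|\rho_0\|_{\dot B^\beta_{p,1}}\sim\delta N^{\beta}r^{2/p}<\varepsilon$. For short times, and ignoring the feedback of $\rho$ on $u$, the solution is $\rho(t)=\rho_0\circ X_t^{-1}$. Depending on the sign of $\beta$, orient the oscillation so that the flow either increases the frequency (for $\beta>0$, oscillate in $x$, which is compressed) or decreases it (for $\beta<0$, oscillate in $y$, which is expanded). The Jacobian is $1$, so the $L^p$ mass is preserved. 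At $t^*$ the frequency becomes $Ne^{\pm At^*}$, and
\[
\|\rho(t^*)\|_{\dot B^\beta_{p,\infty}}\gtrsim e^{|\beta| At^*}\|\rho_0\|_{\dot B^\beta_{p,\infty}}.
\]
Taking $At^*=\tfrac{C}{|\beta|}\log(1/\varepsilon)$ gives inflation beyond $1/\varepsilon$. The lower bound $\beta-2/p>-2$ is needed so that $\rho_0$ and $u_0$ are locally integrable and compactly supported data have finite homogeneous norm; in particular the low-frequency part of the cutoffs is controlled.

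\textbf{Main obstacle: justifying the perturbation.} The true solution has $u=u_{\mathrm{hyp}}+w$, where $w$ is driven by the forcing $\rho e_2$ and by the cutoff error. I would estimate $w$ in Lipschitz (e.g. $B^{1}_{\infty,1}$) norm over $[0,t^*]$ by energy/Besov estimates for the Euler–Boussinesq system linearized around the hyperbolic flow. This gives $\|w\|_{\mathrm{Lip}}\lesssim e^{CAt^*}\bigl(t^*\|\rho\|_{B^1_{\infty,1}}+\text{cutoff error}\bigr)$. The term involving $\rho$ is made negligible by taking $\delta$ extremely small relative to $N$ and $A$. The cutoff error does not matter because the support of $\rho$ stays, up to time $t^*$, in the region where $u$ is exactly hyperbolic, by finite speed of the flow map. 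The delicate part is choosing the parameter order $\varepsilon\to A,t^*\to R\to N\to\delta$ consistently with $t^*<\varepsilon$ and with the $\dot B^\beta_{p,1}$ smallness of both data in the full range $-2<\beta-2/p<1$. After that, a stability estimate comparing $\rho$ with $\rho_0\circ X_t^{-1}$ in $\dot B^{\beta}_{p,\infty}$, using a frequency-localized lower bound $\|\Delta_j\rho(t^*)\|_{L^p}$ at $2^j\sim Ne^{\pm At^*}$, gives the conclusion.
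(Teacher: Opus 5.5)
\textbf{The gap is the background flow.} The field $u_0=\nabla^\perp\bigl(Axy\,\chi(|x|/R)\bigr)$ is not a steady Euler solution.

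\begin{itemize}
\item Its vorticity $\Delta\psi_0$ is an $O(A)$ quadrupole in the cutoff annulus.
\item It is not constant along streamlines: the closed-up hyperbolas pass through both the ball, where it vanishes, and the annulus.
\item Hence it is transported and deformed on the time scale $1/A$.
\end{itemize}

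Your claim that the cutoff error is irrelevant ``by finite speed of the flow map'' controls the support of $\rho$, not the velocity. The Biot--Savart law is nonlocal, so the velocity in the vorticity-free region near the origin changes as soon as the annular vorticity moves.

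Odd--odd symmetry does keep $u\approx(-s(t)x_1,s(t)x_2)$ near $0$. But then the amplification is $\exp\bigl(|\beta|\int_0^{t^*}s(t)\,dt\bigr)$. Since the final/initial ratio must exceed $\varepsilon^{-2}$, you need $\int_0^{t^*}s\gtrsim|\beta|^{-1}\log(1/\varepsilon)$. That means controlling the fully nonlinear 2D Euler evolution of this quadrupole over many turnover times, and nothing in the proposal gives a lower bound on $s(t)$ there.

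In your linearized scheme the same problem appears as a non-gradient forcing $P(u_0\cdot\nabla u_0)$ of size $A^2R$. After integrating over $t^*$ and applying Gronwall, it produces a Lipschitz perturbation of order $A\,(At^*)e^{CAt^*}$. That is at least as large as the strain $A$ itself, so it cannot be treated as an error. The central amplification step is therefore unproven.

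\textbf{How the paper avoids this.} It uses a radial vorticity $\bar w=\lambda^{\frac2p+1-\beta}L^{-\frac{|\beta|}{2+|\beta|}}f(\lambda r)$, with $L=\log\log\lambda$. This is an exact stationary Euler solution, and its velocity is compactly supported because $\int_0^\infty rf\,dr=0$. So the background is exact for arbitrarily many turnover times.

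The price is that differential rotation gives only linear-in-time radial frequency growth, through $\partial_r\bigl(\theta-t\Omega_\lambda(r)\bigr)$ with $|h'|\geq c_0$ on $\operatorname{supp}g$. Consequently $t^*\|\nabla\bar u\|_{L^\infty}\sim L^{\frac{2}{2+|\beta|}}\to\infty$. The double-logarithmic calibration keeps the Gronwall factor at $\exp(CL^2)=\lambda^{o(1)}$. This factor is absorbed by the polynomial smallness $\lambda^{\beta-\frac2p-1}$ of the errors, which is where $\beta-\frac2p<1$ enters.

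\textbf{What survives.} Your scaling bookkeeping for the Boussinesq coupling would go through: shrinking $R$ makes the forcing $\rho e_2$ negligible in the supercritical range. So would your orientation trick for $\beta<0$, which is the analogue of the paper's premixing and unmixing. Both work once the cutoff strain is replaced by such a stationary radial vortex, and exponential stretching by shear.
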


Since $\dot B^\beta_{p,1}\hookrightarrow\dot B^\beta_{p,q}$ and
$\|f\|_{\dot B^\beta_{p,r}}\geq\|f\|_{\dot B^\beta_{p,\infty}}$, Theorem
\ref{thm} gives the stated norm inflation for every $1\leq q,r\leq\infty$.

\textbf{The fully dissipative case.}
If \(\rho\equiv0\), the fully dissipative Boussinesq system reduces to the
incompressible Navier--Stokes equations. In this setting, the critical regularity is
determined by the parabolic scaling
\[
u_\lambda(t,x)=\lambda u(\lambda^2t,\lambda x),
\qquad
p_\lambda(t,x)=\lambda^2p(\lambda^2t,\lambda x),
\]
which leads to the critical Besov spaces $\dot B^{-1+\frac dp}_{p,r}(\mathbb R^d), 1\le p,r\le\infty.$ The local well-posedness theory for the Navier--Stokes equations in critical spaces
is classical; we refer to
\cite{b-BCD-b-bahouri-chemin-fourier-book-2011}
and the references therein.

For the fully dissipative Boussinesq system, the natural scaling is
\[
u_\lambda(t,x)=\lambda u(\lambda^2t,\lambda x),
\qquad
\rho_\lambda(t,x)=\lambda^3\rho(\lambda^2t,\lambda x),
\qquad
p_\lambda(t,x)=\lambda^2p(\lambda^2t,\lambda x).
\]
Accordingly, the scale-invariant regularities are formally given by
\begin{align}\label{critical-fully-diss}
	u_0\in \dot B^{-1+\frac{2}{p}}_{p,q}(\mathbb R^2),
	\qquad
	\rho_0\in \dot B^{-3+\frac{2}{p}}_{p,r}(\mathbb R^2),
\end{align}
where \(1\le p,q,r\le\infty\). The local theory for the fully dissipative
Boussinesq system closely parallels that of Navier--Stokes; see
\cite{m-MB01-Vorticity and Incompressible Flow}. In particular, one has local
	well-posedness in critical Besov product spaces, for instance $\dot B^{-1+\frac dp}_{p,q}(\mathbb R^d)
\times
\dot B^{-1+\frac dp}_{p,q}(\mathbb R^d)$ with $1\leq p<\infty,$
where both components are measured at the velocity critical regularity.

We next recall some ill-posedness results in critical spaces. For the
Navier--Stokes equations, Bourgain and Pavlovi\'c
\cite{b-BP08-NS-B-1infty-illposed}
established strong ill-posedness in
\(\dot B^{-1}_{\infty,\infty}\), and this mechanism was further developed in
\cite{w-Wang15-ill-NS-B-1,
	y-Yoneda10-ill-NS-BMO-1}.
For the fully dissipative Boussinesq system, Li and Wang
\cite{l-LW-norm-boussinesq-B-1-inftyinfty}
proved strong ill-posedness on \(\mathbb T^3\) with small initial data in $\dot B^{-1}_{\infty,\infty}
\times
\dot B^{-1}_{\infty,\infty},$
following the Bourgain--Pavlovi\'c norm inflation strategy.

Recent years have also seen important progress on strong ill-posedness in
supercritical spaces. Luo
\cite{l-L-euler-10-illpose-lxyt-2024-arxiv}
proved strong ill-posedness for the three-dimensional Navier--Stokes equations in
supercritical Sobolev spaces \(H^s(\mathbb R^3)\), \(0<s<\frac12\). This was later
extended in
\cite{l-L-euler-101-illpose-lxyt-2025-arxiv}
to supercritical Besov spaces
\(\dot B^s_{p,q}(\mathbb R^3)\) with
\(s\neq0\), \(1\le p,q\le\infty\), and
\(-3<s-\frac{3}{p}<-1\), including negative regularity regimes near the scaling
critical threshold.

These results suggest the corresponding question for the fully dissipative
Boussinesq system. In contrast to the Navier--Stokes case, the inflation mechanism
here is generated through the density component in a coupled velocity and density
system. Moreover, the admissible range is dictated by the parabolic scaling of the
velocity equation, which shifts the upper threshold by two derivatives compared with
the inviscid case.

More precisely, we prove strong ill-posedness for the fully dissipative Boussinesq system in
\[
\dot B^\beta_{p,q}(\mathbb R^2)
\times
\dot B^\beta_{p,r}(\mathbb R^2),
\qquad
\beta\neq0,\quad
1<p\leq\infty,\quad 1\leq q,r\leq\infty,\quad
-2<\beta-\frac{2}{p}<-1.
\]
In view of the scaling \eqref{critical-fully-diss}, this range is supercritical for the
velocity component \(u\), but subcritical for the density component \(\rho\) with
respect to its own scaling. Nevertheless, norm inflation occurs in \(\rho\), because
the density is transported by a velocity field in a supercritical regime. This is made
precise in the following theorem.

\begin{theorem}[Norm inflation for the fully dissipative Boussinesq system]\label{thm-1}
		Fix $1<p\leq\infty$ and $\beta\neq0$ such that
		$-2<\beta-\frac{2}{p}<-1$. For any $\varepsilon>0$, there exists a solution
		$(u,\rho)$ to \eqref{boussinesq-orginal-0} in the fully dissipative case $\mu=\nu=1$, with initial data
		$(u_0,\rho_0)\in C_c^\infty(\mathbb R^2)$ such that
		\begin{align*}
			\|u_0\|_{\dot{B}^{\beta}_{p,1}}+\|\rho_0\|_{\dot{B}^{\beta}_{p,1}}< \varepsilon.
	\end{align*}
		Moreover, there exists a time $0 < t^* < \varepsilon$ such that
	\begin{align*}
		\|\rho(t^*)\|_{\dot{B}^{\beta}_{p,\infty}}>\frac{1}{\varepsilon}.
	\end{align*}
\end{theorem}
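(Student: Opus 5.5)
We follow the Luo-type strategy, with the velocity acting as a large shear/stretching field and the density as a high-frequency passive scalar. Take initial data of the form $u_0 = u_{0,L}$ and $\rho_0 = \rho_{0,H}$. Here $u_{0,L}$ is a smooth, compactly supported, divergence-free field at a low (but still rescaled) frequency whose velocity gradient is large: near the origin it is approximately a hyperbolic flow $A\,(x_1,-x_2)$ with rate $A\gg1$. The density $\rho_{0,H}(x)=\delta\,\phi(x)\cos(N x_2)$ is oscillating at frequency $N\gg1$ along the contracting direction of the strain. The first step is to construct $u_{0,L}$ as $\lambda^{a}\,U(\lambda x)$, with $U$ a fixed cutoff of $\nabla^\perp(x_1x_2)$. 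We pick $a$ and $\lambda$ so that $\|u_{0,L}\|_{\dot B^\beta_{p,1}}\lesssim\lambda^{a+\beta-2/p}\ll\varepsilon$ while $\|\nabla u_{0,L}\|_{L^\infty}\sim\lambda^{a+1}=A$ is large. This is possible exactly when $\beta-\frac2p<1$, i.e.\ the space is supercritical for the velocity. The second step chooses $\delta N^{\beta-2/p}\ll\varepsilon$, which makes $\|\rho_0\|_{\dot B^\beta_{p,1}}$ small.

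Next we analyze the dynamics on the short time interval $[0,t^*]$ with $t^*=T/A$, where $T\sim\log(\cdot)$ is a large constant chosen at the end. Since $\rho$ is small, we treat the buoyancy $\rho e_2$ as a perturbation. We compare $u$ with the Euler flow $u_E$ generated by $u_{0,L}$, and bound $u-u_E$ by energy/Lipschitz estimates: the forcing satisfies $\int_0^{t^*}\|\rho\|\,dt\lesssim t^*\delta\|\phi\|$, which is negligible. On this time scale the Euler solution stays close to the initial strain field, because the relevant time is $\ll 1/\|\omega_0\|$ relative to its own nonlinear scale, after rescaling by $\lambda$. Hence the flow map $X_t$ near the origin satisfies $DX_t\approx \mathrm{diag}(e^{At},e^{-At})$. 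The density is then $\rho(t)=\rho_0\circ X_t^{-1}$, whose $x_2$-frequency is amplified to $N e^{At^*}=Ne^{T}$ while its amplitude stays $\delta$ and its support is area-preserved. We then compute a lower bound $\|\rho(t^*)\|_{\dot B^\beta_{p,\infty}}\gtrsim \delta (Ne^{T})^{\beta}\,|\mathrm{supp}|^{1/p}$ with the appropriate anisotropic geometry. The decisive point is that frequency growth by $e^{T}$ is accompanied by compression of the support in $x_2$ and stretching in $x_1$. For $\beta>0$ this gives growth by $e^{\beta T}$. For $\beta<0$ we instead orient the oscillation along the expanding direction, so that the frequency decreases to $Ne^{-T}$ and the norm grows by $e^{|\beta|T}$, with the frequency still kept $\gg\lambda$ so that the profile remains separated from the velocity. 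The requirement $\beta-\frac2p>-2$ enters through local integrability: the cutoff $\phi$ must keep its low-frequency contributions controlled in $\dot B^\beta_{p,1}$, and the localized Littlewood--Paley estimate for $\phi(x)\cos(Nx_2)$ must hold.

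For Theorem~\ref{thm-1}, the same construction applies with two corrections, heat smoothing and viscous decay of the strain. With $\mu=\nu=1$ we need $t^*\ll \lambda^{-2}$ for the velocity, so that the strain persists. This requires $A\lambda^{-2}=\lambda^{a-1}\gg1$, i.e.\ $a>1$ together with $a+\beta-\frac2p<0$, which is exactly the condition $\beta-\frac2p<-1$. We also need $t^*(Ne^{T})^2\lesssim1$ so that diffusion does not kill $\rho$, which we arrange in the $\beta<0$ case, where the frequency decreases. For $\beta>0$ this constraint bounds $N$, and we compensate by taking $A$ larger. The diffusive term is handled via the Lagrangian/Duhamel representation along the stochastic or the perturbed flow.

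We expect the main obstacle to be the rigorous lower bound of the Besov norm of the transported, compressed profile. It requires showing that the flow map is uniformly close to the linear hyperbolic map on $\mathrm{supp}\,\rho_0$ up to errors that are small in $C^1$ after the $e^{T}$ rescaling, so that $\rho(t^*)$ remains essentially a single wave packet. We would first try to localize $\rho_0$ in a ball of radius $\ll\lambda^{-1}$ where $u_{0,L}$ is exactly linear, and to propagate a commutator estimate for $u-u_{\rm lin}$.
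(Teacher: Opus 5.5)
\textbf{Where the proposal breaks.} The step that fails is the persistence of the hyperbolic strain on $[0,t^*]$, and the whole growth mechanism rests on it.

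By scaling, the Euler solution with data $\lambda^aU(\lambda x)$ is $u_E(t,x)=\lambda^aV(\lambda^{a+1}t,\lambda x)$, where $V$ is the Euler solution with the fixed data $U$. So $t^*=T/A$ is rescaled time $T\gg1$, i.e.\ many eddy-turnover times, not a short time.

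Your justification $t^*\ll 1/\|\omega_0\|_{L^\infty}$ is also false. For a cutoff of $\nabla^\perp(x_1x_2)$, the strain at the origin is produced by the vorticity $\Delta(\chi\,x_1x_2)$ in the cutoff region. That vorticity has size $\|\omega_0\|_{L^\infty}\sim A$, so $t^*\|\omega_0\|_{L^\infty}\sim T$.

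More fundamentally, this vorticity is advected by the very strain it generates. That happens on the time scale $A^{-1}$ whatever its amplitude, so $V$ changes by $O(1)$ after rescaled time $O(1)$. Nothing keeps $\nabla u(t,0)\approx\mathrm{diag}(A,-A)$ up to $t=T/A$; typically the vorticity is swept along the unstable axis and the strain at the origin is depleted.

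Consequently you have not established $DX_{t^*}\approx\mathrm{diag}(e^{T},e^{-T})$, nor the amplification $e^{|\beta|T}$, which is the only source of growth in your argument. The commutator estimate for $u-u_{\rm lin}$ you propose cannot close, because $u-u_{\rm lin}$ is not small on this window. Persistent hyperbolic stretching would require a genuinely steady configuration with a hyperbolic stagnation point, together with a stability argument over $T$ turnover times. Neither is in the proposal.

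\textbf{How the paper avoids this.} What is missing is a velocity field that is an exact stationary solution on the whole window.

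The paper takes the radial vorticity $\bar w=\lambda^{\frac2p+1-\beta}L^{-\frac{|\beta|}{2+|\beta|}}f(\lambda r)$, with $L=\log\log\lambda$. This is steady for 2D Euler, and the paper uses differential rotation rather than hyperbolic stretching.
\begin{itemize}
\item Since $|h'|\ge c_0$ on $\operatorname{supp}g$, the radial frequency of $\bar\rho$ grows only linearly in time.
\item This is why $t^*$ corresponds to $L^{\frac{2}{2+|\beta|}}$ turnover times, and why the gain is only a power of $L$.
\item For $\beta<0$ the paper uses premixing followed by unmixing.
\end{itemize}

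Because $\bar u$ is exactly steady, the only errors are $-\bar\rho e_2$, $-\Delta\bar u$ and $-\Delta\bar\rho$. The stability argument is then a Gronwall estimate with factor $\exp(CL^2)=\lambda^{o(1)}$. It is closed using $t^*\lambda^2\exp(C_0L^2)\to0$, which is exactly your heuristic $t^*\ll\lambda^{-2}$ and the place where $\beta-\frac2p<-1$ enters.

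If you replace the cutoff strain by such a steady vortex, your scaling bookkeeping carries over, including the choice $1<a<\frac2p-\beta$ and the diffusion constraint on the density frequency.
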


The same Besov embeddings show that Theorem \ref{thm-1} gives the stated conclusion
for every $1\leq q,r\leq\infty$.

\begin{remark}
	The upper thresholds in our results are consistent with the corresponding local well-posedness theory. 
		For the inviscid Boussinesq system, local well-posedness holds in
	$B^{1+\frac{2}{p}}_{p,1}(\mathbb R^2)\times B^{1+\frac{2}{p}}_{p,1}(\mathbb R^2)$ with $1<p<\infty$; see \cite{c-CN-local-inviscid}. 
	Thus the condition $\beta-\frac{2}{p}<1$ is sharp with respect to this threshold. 
		For the fully dissipative system, local well-posedness holds in $
	\dot B^{-1+\frac dp}_{p,q}(\mathbb R^d)\times
	\dot B^{-1+\frac dp}_{p,q}(\mathbb R^d)$ with $1\leq p<\infty,1\leq q<\infty$;
	see \cite{m-MB01-Vorticity and Incompressible Flow,b-BCD-b-bahouri-chemin-fourier-book-2011}. 
Thus the upper threshold
\(\beta-\frac{2}{p}<-1\) is sharp in the velocity critical scale.
\end{remark}

\begin{remark}
The restriction \(\beta-\frac{2}{p}>-2\) ensures that the Besov spaces considered here lie in the locally integrable range. Consequently, the initial data produced by the construction are genuine locally integrable functions, rather than merely distributions.
\end{remark}

\begin{remark}
	For the fractionally dissipative Boussinesq system
	\[
		\begin{cases}
			\partial_t u+\Lambda^\alpha u+u\cdot\nabla u+\nabla p=\rho e_2,\\
			\partial_t \rho+\Lambda^\gamma\rho+u\cdot\nabla\rho=0,\\
		\diver u=0,
	\end{cases}
	\]
where $\alpha,\gamma\geq 0$, $\Lambda=(-\Delta)^{\frac{1}{2}}$, the case \(\alpha+\gamma=1\) is usually called the critical dissipation regime. Existing
		global regularity results in the critical dissipation regime are
	typically proved in high regularity spaces, such as \(H^s(\mathbb R^2)\), \(s>2\);
	see
	\cite{j-JMWZ-boussinesq-critical dissipation-14siam,
		s-SW-critical dissipation-2019-JAM,
		s-SWXY-fractional-bouss-2025-math ann}.
	The argument of this paper also applies to this fractionally dissipative system and
gives strong ill-posedness in \(H^s(\mathbb R^2)\) for
\[
-1<s<2-\max\{\alpha,\gamma\},\qquad s\neq0.
\]
Thus there remains a natural gap between this range and the known high
regularity global theory above \(2\).
\end{remark}

\textbf{Idea of the proof.}
We briefly explain the main idea of the construction. Our construction is motivated by shear mechanisms developed for Euler and SQG
equations
\cite{c-CMO-euler-2d-Instantaneous gap loss of Sobolev regularity-24,
	c-CMZ-SQG-Ck-Sobolev-nonexistence-22,
	c-CMZ-SQG-cmp-generalized-24,g-GL-IMRN-SQG-critical,c-CMZ-SQG-annpde-fractional diffusion},
and by the backward energy cascade used to handle negative regularity in the
Navier--Stokes construction of
\cite{l-L-euler-101-illpose-lxyt-2025-arxiv}. The point here is to implement these
ideas in the Boussinesq system, where the velocity and the density are coupled.

The approximate solution consists of a stationary radial vorticity and a density
transported by the velocity induced by this vorticity. We work in polar coordinates
\begin{align*}
	x=(r\cos\theta,r\sin\theta),
	\qquad r\geq 0,\quad \theta\in[0,2\pi).
\end{align*}
Let
\begin{align*}
	\bar w(x,t)
	=
	\lambda^{\frac{2}{p}+1-\beta}
	(\log\log\lambda)^{-\frac{|\beta|}{2+|\beta|}}
	f(\lambda r),
	\qquad
	\bar u=u[\bar w],
\end{align*}
where
\begin{align*}
	u[f](x)
	=
	\frac{1}{2\pi}
	\int_{\mathbb R^2}
	\frac{(x-y)^\perp}{|x-y|^2}f(y)\,dy
\end{align*}
is the Biot--Savart law. Since \(\bar w\) is radial and independent of time,
\(\bar u\) is stationary and purely angular $
	\bar u(r,\theta)=\bar u_\theta(r)e_\theta.$ We denote its angular velocity by
\begin{align*}
	\Omega_\lambda(r)
	:=
	\frac{\bar u_\theta(r)}{r}
	=
	\lambda^{\frac{2}{p}-\beta}
	(\log\log\lambda)^{-\frac{|\beta|}{2+|\beta|}}
	\frac{u_\theta[f](\lambda r)}{r}.
\end{align*}
Thus, the corresponding flow preserves \(r\) and maps
\begin{align*}
	(r,\theta)
	\longmapsto
	\bigl(r,\theta+t\Omega_\lambda(r)\bigr).
\end{align*}

Set
\begin{align*}
	t^*
	=
	\lambda^{-\frac{2}{p}-1+\beta}\log\log\lambda,
\end{align*}
which is the time at which norm inflation will occur. For \(\beta<0\), we
additionally set $
	k=-\lfloor\beta\rfloor+2,$
where \(\lfloor\beta\rfloor\) denotes the greatest integer not exceeding
\(\beta\). We define the initial density profile by
\begin{align*}
	\bar\rho_0(r,\theta)
	=
	\begin{cases}
		\displaystyle
		\lambda^{\frac{2}{p}-\beta}
		(\log\log\lambda)^{-\frac{\beta}{2+\beta}}
		g(\lambda r)\cos\theta,
		& \beta>0,\\[3mm]
		\displaystyle
		\lambda^{\frac{2}{p}-\beta-k}
		(\log\log\lambda)^{-\frac{\beta+2k}{2-\beta}}
		\left.
		\partial_r^k
		\left[
		g(\lambda r)
		\cos\left(
		\vartheta+t^*\Omega_\lambda(r)
		\right)
		\right]
		\right|_{\vartheta=\theta},
		& \beta<0,
	\end{cases}
\end{align*}
where, in the second case, the radial derivatives are taken with
\(\vartheta\) fixed. The approximate density is then defined uniformly by
\begin{align*}
	\bar\rho(t,r,\theta)
	=
	\bar\rho_0\bigl(r,\theta-t\Omega_\lambda(r)\bigr).
\end{align*}
 By construction, the approximate pair satisfies
\begin{align*}
	\partial_t\bar w+\bar u\cdot\nabla\bar w=0,~~
	\partial_t\bar\rho+\bar u\cdot\nabla\bar\rho=0.
\end{align*}

The radial profiles \(f\) and \(g\) are chosen as in Lemma
\ref{lem-choice-fg-inviscid}. Their supports are separated, \(f\) generates
a nondegenerate angular shear on the support of \(g\), and
\begin{align*}
	f^{(-2)}\in C_c^\infty((0,\infty)).
\end{align*}
The shear produces rapid oscillations in \(\bar\rho(t^*)\). When \(\beta>0\), the shear creates rapid radial oscillations at $t=t^*$, and differentiating the phase yields the desired lower bound. When $\beta<0$, the initial density is premixed and the transport unwinds the radial oscillation at $t=t^*$. In this case, duality is used to prove the smallness of the initial density, whereas the final lower bound follows from an $L^p$
lower bound and interpolation with a $\dot{B}^1_{p,\infty}$ upper bound.

To transfer this growth to an exact inviscid solution \((u,\rho)\) with the
same initial data, set
\begin{align*}
	\zeta=\bar u-u,
	\qquad
	\xi=\bar\rho-\rho.
\end{align*}
The resulting perturbation system is forced only by the Boussinesq coupling
omitted from the transport ansatz. Energy estimates show that
\((\zeta,\xi)\) remains small up to \(t^*\), and hence the lower bound for
\(\bar\rho(t^*)\) transfers to \(\rho(t^*)\).

Finally, we pass to the fully dissipative system using the same approximate solution.
The additional terms are \(-\Delta\bar u\) and \(-\Delta\bar\rho\), which are treated
as perturbative errors in the same stability argument. Since they contain two more
derivatives than the inviscid errors, the admissible range changes from
\[
\beta-\frac{2}{p}<1
\qquad\text{to}\qquad
\beta-\frac{2}{p}<-1.
\]
This matches the shift from the inviscid Lipschitz threshold to the critical threshold
determined by the parabolic scaling of the velocity equation.

The paper is organized as follows. In Section \ref{sec:2}, we collect the preliminaries used throughout the paper. More precisely, Subsection \ref{sec:2.1} recalls
the auxiliary analytic tools, while Subsection \ref{sec:2.2} constructs the initial data
and the radial profiles used in the norm inflation argument. Section \ref{sec:3} is
devoted to the inviscid case. In Subsection \ref{sec:3.1}, we introduce the approximate
solution and prove its basic estimates, including the norm inflation of the approximate
density. Subsection \ref{sec:3.2} establishes the stability of the approximation, and
Subsection \ref{sec:3.3} completes the proof of Theorem \ref{thm}. Finally, Section \ref{sec:4} is devoted to the fully dissipative case. Subsection
\ref{sec:4.1} develops the approximation and stability argument in the dissipative
setting, and Subsection \ref{sec:4.2} proves Theorem \ref{thm-1}.

\section{Preliminaries}\label{sec:2}
\subsection{Auxiliary Analysis Tools}\label{sec:2.1}
This subsection collects several standard facts on Sobolev and Besov spaces that will
be used throughout the paper. We begin by fixing our convention for Sobolev spaces.
For an integer \(k\geq0\) and \(1\leq p\leq\infty\), we set
\[
\|f\|_{W^{k,p}}
:=
\sum_{0\leq |\alpha|\leq k}\|\partial^\alpha f\|_{L^p},
\qquad
\|f\|_{\dot W^{k,p}}
:=
\sum_{|\alpha|=k}\|\partial^\alpha f\|_{L^p}.
\]
For fractional or negative regularity indices, we use the Sobolev notation only in the
range \(1<p<\infty\). More precisely, for \(s\in\mathbb R\) and \(1<p<\infty\), we define
\[
\|f\|_{W^{s,p}}
:=
\|(I-\Delta)^{\frac{s}{2}}f\|_{L^p},
\qquad
\|f\|_{\dot W^{s,p}}
:=
\|(-\Delta)^{\frac{s}{2}}f\|_{L^p},
\]
where the operators are defined through the Fourier multipliers
\[
\widehat{(I-\Delta)^{\frac{s}{2}}f}(\xi)
=
(1+|\xi|^2)^{\frac{s}{2}}\widehat f(\xi),
\qquad
\widehat{(-\Delta)^{\frac{s}{2}}f}(\xi)
=
|\xi|^s\widehat f(\xi).
\]
When \(p=2\), we use the standard notation
\[
H^s=W^{s,2},
\qquad
\dot H^s=\dot W^{s,2}.
\]
The endpoint cases \(p=1\) and \(p=\infty\) for fractional Sobolev spaces will not be
used below; at these endpoints we work instead in the Besov scale.

We next recall the homogeneous Besov spaces. Let
\((\dot\Delta_j)_{j\in\mathbb Z}\) be the homogeneous dyadic blocks associated with a
smooth Littlewood--Paley decomposition away from the origin. For \(s\in\mathbb R\) and
\(1\leq p,r\leq\infty\), we define
\[
\|f\|_{\dot B^s_{p,r}}
:=
\left\|
\left(2^{js}\|\dot\Delta_j f\|_{L^p}\right)_{j\in\mathbb Z}
\right\|_{\ell^r}.
\]
The homogeneous Besov space \(\dot B^s_{p,r}(\mathbb R^n)\) consists of tempered
distributions modulo polynomials for which the above norm is finite. The
nonhomogeneous Besov spaces \(B^s_{p,r}\) are defined similarly by using the
nonhomogeneous dyadic blocks. We refer to \cite{b-BCD-b-bahouri-chemin-fourier-book-2011} for the precise
definitions of Besov spaces, the details of the Littlewood--Paley theory, and the
standard properties recalled below, including embeddings, interpolation, duality,
Bernstein inequalities, and Fourier multiplier estimates.

The following embeddings will be used repeatedly.

\begin{lemma}[Basic embeddings]\label{lem-basic-besov-embeddings}
	Let \(s\in\mathbb R\). The following embeddings hold.
	\begin{enumerate}
		\item If \(1\leq p\leq \widetilde p\leq\infty\) and
		\(1\leq r\leq \widetilde r\leq\infty\), then
		\[
		\dot B^s_{p,r}(\mathbb R^n)
		\hookrightarrow
		\dot B^{s-n(\frac1p-\frac1{\widetilde p})}_{\widetilde p,\widetilde r}
		(\mathbb R^n).
		\]
		
		\item For every integer \(k\geq0\) and \(1\leq p\leq\infty\), one has
		\[
		\dot B^k_{p,1}(\mathbb R^n)
		\hookrightarrow
		\dot W^{k,p}(\mathbb R^n)
		\hookrightarrow
		\dot B^k_{p,\infty}(\mathbb R^n).
		\]
		More generally, for \(s\in\mathbb R\) and \(1<p<\infty\),
		\[
		\dot B^s_{p,1}(\mathbb R^n)
		\hookrightarrow
		\dot B^s_{p,\min\{p,2\}}(\mathbb R^n)
		\hookrightarrow
		\dot W^{s,p}(\mathbb R^n)
		\hookrightarrow
		\dot B^s_{p,\max\{p,2\}}(\mathbb R^n)
		\hookrightarrow
		\dot B^s_{p,\infty}(\mathbb R^n).
		\]
	\end{enumerate}
\end{lemma}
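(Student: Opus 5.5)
These embeddings are standard consequences of Littlewood--Paley theory, and I would prove them directly from the definition of the homogeneous Besov norm, using Bernstein's inequality and, in the range \(1<p<\infty\), the Littlewood--Paley square function characterization of \(L^p\).

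\emph{Part 1.} The first ingredient is Bernstein's inequality. For \(f\) with \(\widehat f\) supported in an annulus \(2^j\mathcal C\), one has \(\|f\|_{L^{\widetilde p}}\leq C2^{jn(\frac1p-\frac1{\widetilde p})}\|f\|_{L^p}\). To see this, write \(\dot\Delta_j f=\widetilde\varphi_j*\dot\Delta_j f\), where \(\widetilde\varphi_j=2^{jn}\widetilde\varphi(2^j\cdot)\) and \(\widehat{\widetilde\varphi}\equiv1\) on the annulus. Young's inequality with exponent \(1+\frac1{\widetilde p}=\frac1m+\frac1p\) then gives the bound, since \(\|\widetilde\varphi_j\|_{L^m}=C2^{jn(1-\frac1m)}\). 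Multiplying by \(2^{j(s-n(\frac1p-\frac1{\widetilde p}))}\) yields the pointwise-in-\(j\) inequality
\[2^{j(s-n(\frac1p-\frac1{\widetilde p}))}\|\dot\Delta_j f\|_{L^{\widetilde p}}\leq C2^{js}\|\dot\Delta_j f\|_{L^p}.\]
Taking the \(\ell^{\widetilde r}\) norm and using \(\ell^r\hookrightarrow\ell^{\widetilde r}\) finishes Part 1.

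\emph{Part 2, integer case.} For \(\dot B^k_{p,1}\hookrightarrow\dot W^{k,p}\), write \(\partial^\alpha f=\sum_j\partial^\alpha\dot\Delta_j f\) with \(|\alpha|=k\); this series converges in \(\mathcal S'\) modulo polynomials. Bernstein's inequality for derivatives, \(\|\partial^\alpha\dot\Delta_j f\|_{L^p}\leq C2^{jk}\|\dot\Delta_j f\|_{L^p}\), then gives \(\|\partial^\alpha f\|_{L^p}\leq C\|f\|_{\dot B^k_{p,1}}\) by the triangle inequality. For \(\dot W^{k,p}\hookrightarrow\dot B^k_{p,\infty}\), use the reverse Bernstein bound \(2^{jk}\|\dot\Delta_j f\|_{L^p}\leq C\sup_{|\alpha|=k}\|\partial^\alpha\dot\Delta_j f\|_{L^p}\). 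This follows by writing \(\dot\Delta_j f=\sum_{|\alpha|=k}2^{-jk}m_{\alpha,j}(D)\partial^\alpha\dot\Delta_j f\) with smooth multipliers \(m_{\alpha,j}\) supported on the annulus, obtained from \(|\xi|^{2k}=\sum c_\alpha\xi^{2\alpha}\). Then \(\|\dot\Delta_j\partial^\alpha f\|_{L^p}\leq C\|\partial^\alpha f\|_{L^p}\) because the kernels of \(\dot\Delta_j\) are uniformly bounded in \(L^1\).

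\emph{Part 2, fractional case.} The outer embeddings \(\dot B^s_{p,1}\hookrightarrow\dot B^s_{p,\min\{p,2\}}\) and \(\dot B^s_{p,\max\{p,2\}}\hookrightarrow\dot B^s_{p,\infty}\) are just \(\ell^r\) monotonicity. For the middle pair I would reduce to \(s=0\): since \((-\Delta)^{s/2}\) commutes with \(\dot\Delta_j\), Bernstein's inequality gives \(\|\dot\Delta_j(-\Delta)^{s/2}f\|_{L^p}\approx2^{js}\|\dot\Delta_j f\|_{L^p}\). It then suffices to prove
\[\dot B^0_{p,\min\{p,2\}}\hookrightarrow L^p\hookrightarrow\dot B^0_{p,\max\{p,2\}}.\]
The key tool is the square function equivalence \(\|f\|_{L^p}\approx\|(\sum_j|\dot\Delta_j f|^2)^{1/2}\|_{L^p}\) for \(1<p<\infty\), which comes from vector-valued Calderón--Zygmund theory with Rademacher averaging.

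\begin{itemize}
\item If \(p\geq2\), Minkowski's inequality in \(L^{p/2}\) gives \(\|(\sum_j|g_j|^2)^{1/2}\|_{L^p}\leq(\sum_j\|g_j\|_{L^p}^2)^{1/2}\), which is the left embedding. The inequality \(\sum_j\|g_j\|_{L^p}^p\leq\|(\sum_j|g_j|^2)^{1/2}\|_{L^p}^p\), from \(\ell^2\hookrightarrow\ell^p\) pointwise, is the right embedding.
\item If \(p\leq2\), the roles are reversed: use \(\ell^p\hookrightarrow\ell^2\) pointwise for the left embedding and the reverse Minkowski inequality (valid for \(p/2\leq1\)) for the right one.
\end{itemize}

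The main obstacle is the square function theorem together with the handling of "modulo polynomials", so that \(f=\sum_j\dot\Delta_j f\) is justified. Both are classical, and I would cite \cite{b-BCD-b-bahouri-chemin-fourier-book-2011} for them rather than reprove them.
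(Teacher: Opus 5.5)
Your proposal is correct. The paper gives no proof of this lemma and simply defers to the standard Littlewood--Paley theory in \cite{b-BCD-b-bahouri-chemin-fourier-book-2011}, and your argument is that standard derivation: Bernstein plus $\ell^r$ monotonicity for Part 1, direct and reverse Bernstein for the integer case (valid for all $1\le p\le\infty$), and the square-function sandwich for $1<p<\infty$ after reducing to $s=0$ via $(-\Delta)^{s/2}$.

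The one step worth making explicit is the inclusion $\dot B^0_{p,\min\{p,2\}}\hookrightarrow L^p$. Apply the square-function bound to the partial sums $\sum_{|j|\le N}\dot\Delta_j f$. Your own $\ell^{\min\{p,2\}}$ inequality, applied to the tails, shows these partial sums are Cauchy in $L^p$, because $\dot\Delta_{j'}$ of a tail only involves the blocks with $|j-j'|\le 1$. Their $L^p$ limit agrees with $f$ modulo polynomials, since $\sum_j\dot\Delta_j\phi=\phi$ in $\mathcal S$ for every $\phi\in\mathcal S_0$.
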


The next lemma records the interpolation estimate needed below.
\begin{lemma}[Besov interpolation]\label{lem-besov-interpolation}
	Let \(s_1<s_2\), \(0<\theta<1\), and
	\[
	s=\theta s_1+(1-\theta)s_2.
	\]
	Then, for \(1\leq p\leq\infty\),
	\[
	\|f\|_{\dot B^s_{p,1}}
	\lesssim
	\|f\|_{\dot B^{s_1}_{p,\infty}}^\theta
	\|f\|_{\dot B^{s_2}_{p,\infty}}^{1-\theta}.
	\]
\end{lemma}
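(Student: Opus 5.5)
The plan is to reduce the interpolation inequality to a dyadic splitting argument. Set
\[
A:=\|f\|_{\dot B^{s_1}_{p,\infty}}=\sup_j 2^{js_1}\|\dot\Delta_j f\|_{L^p},
\qquad
B:=\|f\|_{\dot B^{s_2}_{p,\infty}}=\sup_j 2^{js_2}\|\dot\Delta_j f\|_{L^p}.
\]
We may assume $0<A,B<\infty$; otherwise the inequality is trivial, since $A=0$ or $B=0$ forces all dyadic blocks to vanish and $f=0$ modulo polynomials. For each $j$ there are two pointwise bounds:
\[
2^{js}\|\dot\Delta_j f\|_{L^p}\le 2^{j(s-s_1)}A=2^{j(1-\theta)(s_2-s_1)}A,
\]
\[
2^{js}\|\dot\Delta_j f\|_{L^p}\le 2^{j(s-s_2)}B=2^{-j\theta(s_2-s_1)}B.
\]
Here we used $s-s_1=(1-\theta)(s_2-s_1)>0$ and $s-s_2=-\theta(s_2-s_1)<0$.

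Next, fix a threshold integer $N\in\mathbb Z$ to be optimized. Sum the first bound over $j\le N$ and the second over $j>N$. Both are geometric series with ratios strictly less than one, because $s_1<s_2$ and $0<\theta<1$. This gives
\[
\|f\|_{\dot B^s_{p,1}}\le C\bigl(2^{N(1-\theta)(s_2-s_1)}A+2^{-N\theta(s_2-s_1)}B\bigr),
\]
where $C$ depends only on $\theta$ and $s_2-s_1$.

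The only delicate point is the choice of $N$. Take $N$ to be the integer part of $\frac{1}{s_2-s_1}\log_2(B/A)$, so that $2^{N(s_2-s_1)}\approx B/A$ up to a factor $2^{s_2-s_1}$. Then both terms are comparable to
\[
A^{\theta}B^{1-\theta}\cdot 2^{O(s_2-s_1)}.
\]
Indeed, $2^{N(1-\theta)(s_2-s_1)}A\approx (B/A)^{1-\theta}A=A^\theta B^{1-\theta}$, and likewise for the other term. This yields
\[
\|f\|_{\dot B^s_{p,1}}\lesssim \|f\|_{\dot B^{s_1}_{p,\infty}}^\theta\|f\|_{\dot B^{s_2}_{p,\infty}}^{1-\theta},
\]
with an implicit constant depending only on $s_1,s_2,\theta$.

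The argument is uniform in $1\le p\le\infty$, since it only uses the $L^p$ norms of the blocks as scalars. There is no real obstacle beyond keeping track of the rounding of $N$ to an integer.
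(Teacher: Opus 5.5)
Your dyadic splitting argument is correct. The two blockwise bounds, the geometric sums on either side of the threshold $N$, and the choice $N=\lfloor \frac{1}{s_2-s_1}\log_2(B/A)\rfloor$ together give $\|f\|_{\dot B^s_{p,1}}\lesssim A^\theta B^{1-\theta}$, with a constant depending only on $\theta$ and $s_2-s_1$. The paper does not prove this lemma; it cites it as standard from Bahouri--Chemin--Danchin, whose proof is exactly this split-and-optimize argument.
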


We also need the following duality characterization of homogeneous Besov spaces, which will be used to handle negative regularity norms. Set
\[
\mathcal S_0(\mathbb R^n)
:=
\left\{
\phi\in\mathcal S(\mathbb R^n):
\int_{\mathbb R^n}x^\alpha\phi(x)\,dx=0
\text{ for every multi-index }\alpha
\right\}.
\]

\begin{prop}[Besov duality]\label{prop-besov-duality}
	Let \(s\in\mathbb R\) and \(1\leq p,r\leq\infty\). For
	\(f\in\mathcal S'(\mathbb R^n)/\mathcal P(\mathbb R^n)\), one has
	\[
	\|f\|_{\dot B^s_{p,r}}
	\sim
	\sup_{\phi\in Q^{-s}_{p',r'}}
	\left|
	\int_{\mathbb R^n} f(x)\phi(x)\,dx
	\right|,
	\]
	where \(p'\) and \(r'\) denote the H\"older conjugates of \(p\) and \(r\),
	respectively, and
	\[
	Q^{-s}_{p',r'}
	:=
	\left\{
		\phi\in \mathcal S_0(\mathbb R^n)\cap \dot B^{-s}_{p',r'}:
	\|\phi\|_{\dot B^{-s}_{p',r'}}\leq 1
	\right\}.
	\]
\end{prop}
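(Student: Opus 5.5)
The plan is to establish the two directions separately, with the dyadic blocks doing most of the work. Fix a Littlewood--Paley family $\dot\Delta_j$ and an enlarged family $\widetilde{\dot\Delta}_j=\dot\Delta_{j-1}+\dot\Delta_j+\dot\Delta_{j+1}$, so that $\widetilde{\dot\Delta}_j\dot\Delta_j=\dot\Delta_j$. The key fact is that the Fourier transform of any $\phi\in\mathcal S_0$ vanishes to infinite order at the origin. Consequently $\phi=\sum_j\dot\Delta_j\phi$ converges in $\mathcal S$. The pairing $\langle f,\phi\rangle$ is then well defined for $f\in\mathcal S'/\mathcal P$, because polynomials annihilate $\mathcal S_0$.

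For the upper bound, take $\phi\in Q^{-s}_{p',r'}$ and write
\[
\langle f,\phi\rangle=\sum_{j}\langle \dot\Delta_j f,\widetilde{\dot\Delta}_j\phi\rangle .
\]
Applying H\"older's inequality in $x$ and then in $\ell^r$, with weights $2^{js}$ and $2^{-js}$, gives
\[
|\langle f,\phi\rangle|\le\sum_j 2^{js}\|\dot\Delta_jf\|_{L^p}\,2^{-js}\|\widetilde{\dot\Delta}_j\phi\|_{L^{p'}}\lesssim \|f\|_{\dot B^s_{p,r}}\|\phi\|_{\dot B^{-s}_{p',r'}} .
\]
The last step uses $\|\widetilde{\dot\Delta}_j\phi\|_{L^{p'}}\le\sum_{|k-j|\le1}\|\dot\Delta_k\phi\|_{L^{p'}}$. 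Taking the supremum over $\phi$ in the unit ball yields $\sup\lesssim\|f\|$.

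For the lower bound, first normalize a finite truncation. By monotone convergence, it suffices to bound $\sum_{|j|\le N}2^{js}\|\dot\Delta_j f\|_{L^p}a_j$ for a finitely supported nonnegative sequence $(a_j)$ with $\|a\|_{\ell^{r'}}\le1$. This uses the $\ell^r$--$\ell^{r'}$ duality, where $r=\infty$ is handled with $\ell^1$ sequences. For each $j$, $L^p$--$L^{p'}$ duality produces $g_j\in\mathcal S$ with $\|g_j\|_{L^{p'}}\le1$ and
\[
\langle\dot\Delta_jf,g_j\rangle\ge\tfrac12\|\dot\Delta_jf\|_{L^p}.
\]
When $p=\infty$, $L^1$ is not the full dual, but because $\dot\Delta_jf$ is a bounded continuous function, Schwartz test functions still approximate the sup norm; when $p=1$, density of $\mathcal S$ in $L^\infty$-duality against the $L^1$ function suffices. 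Then set
\[
\phi=\sum_{|j|\le N}2^{js}a_j\,\dot\Delta_j g_j .
\]
Each term has compact Fourier support away from the origin, so $\phi\in\mathcal S_0$. Since $\dot\Delta_k\phi$ involves only $|k-j|\le1$ and Young's inequality gives $\|\dot\Delta_jg_j\|_{L^{p'}}\lesssim1$, we get $\|\phi\|_{\dot B^{-s}_{p',r'}}\lesssim\|a\|_{\ell^{r'}}\le1$.

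The main obstacle is that the pairing then produces $\langle f,\phi\rangle=\sum 2^{js}a_j\langle\dot\Delta_jf,g_j\rangle$, but only up to the symmetry of $\dot\Delta_j$, and the signs must not cancel. Symmetry of the kernel gives $\langle f,\dot\Delta_jg_j\rangle=\langle\dot\Delta_jf,g_j\rangle$, and each term is real and nonnegative by the choice of $g_j$, so no cancellation occurs. The resulting bound is
\[
\langle f,\phi\rangle\ge\tfrac12\sum 2^{js}a_j\|\dot\Delta_jf\|_{L^p}.
\]
After dividing by the constant to normalize $\phi$ into $Q^{-s}_{p',r'}$, this gives $\|f\|\lesssim\sup$. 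As a fallback, one can cite the standard duality results in \cite{b-BCD-b-bahouri-chemin-fourier-book-2011}.
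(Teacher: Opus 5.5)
Your proof is correct. It is essentially the standard Bahouri--Chemin--Danchin argument \cite{b-BCD-b-bahouri-chemin-fourier-book-2011}, which the paper cites without reproducing: H\"older in $x$ and in $\ell^r$ gives the upper bound, and near-extremal test functions $\phi=\sum_{|j|\le N}2^{js}a_j\dot\Delta_j g_j\in\mathcal S_0$ give the lower bound. Building these test functions in $\mathcal S_0$ correctly adapts the argument to the $\mathcal S'/\mathcal P$ formulation. One small case needs tidying: if $\|\dot\Delta_j f\|_{L^p}=\infty$ for some $j$, choose $g_j$ so that $\langle\dot\Delta_j f,g_j\rangle$ is arbitrarily large, rather than $\ge\frac12\|\dot\Delta_j f\|_{L^p}$.
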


The following Bernstein inequalities and Fourier multiplier estimates will be used throughout the paper.

\begin{lemma}[Bernstein inequalities]\label{lem-bernstein}
	Let \(1\leq p\leq q\leq\infty\), \(k\in\mathbb N_0\), and \(j\in\mathbb Z\).
	Assume that
	\[
	\operatorname{supp}\widehat f
	\subset
	\{\xi\in\mathbb R^n:|\xi|\leq C_0 2^j\}.
	\]
	Then
	\[
	\|\nabla^k f\|_{L^q}
	\lesssim
	2^{jk+nj(\frac1p-\frac1q)}\|f\|_{L^p}.
	\]
	If, in addition,
	\[
	\operatorname{supp}\widehat f
	\subset
	\{\xi\in\mathbb R^n:c_0 2^j\leq |\xi|\leq C_0 2^j\},
	\]
	then
	\[
	\|\nabla^k f\|_{L^p}
	\sim
	2^{jk}\|f\|_{L^p}.
	\]
	The implicit constants are independent of \(j\) and \(f\).
\end{lemma}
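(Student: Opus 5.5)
The statement just above is Lemma \ref{lem-bernstein}, the classical Bernstein inequalities. I will prove them by scaling and convolution with a fixed Schwartz kernel. Fix $\chi\in\mathcal S(\mathbb R^n)$ with $\widehat\chi\equiv1$ on $\{|\xi|\leq C_0\}$. Set $\chi_j(x)=2^{jn}\chi(2^jx)$, so that $\widehat{\chi_j}(\xi)=\widehat\chi(2^{-j}\xi)\equiv1$ on $\{|\xi|\le C_02^j\}$. Under the first support hypothesis we get $f=\chi_j*f$, and hence
\[
\partial^\alpha f=(\partial^\alpha\chi_j)*f,\qquad \partial^\alpha\chi_j(x)=2^{j|\alpha|}2^{jn}(\partial^\alpha\chi)(2^jx).
\]
Young's inequality with $1+\frac1q=\frac1m+\frac1p$ then gives
\[
\|\partial^\alpha f\|_{L^q}\le\|\partial^\alpha\chi_j\|_{L^m}\|f\|_{L^p}.
\]
Scaling shows that
\[
\|\partial^\alpha\chi_j\|_{L^m}=2^{j|\alpha|+jn(1-\frac1m)}\|\partial^\alpha\chi\|_{L^m}=2^{j|\alpha|+nj(\frac1p-\frac1q)}\|\partial^\alpha\chi\|_{L^m}.
\]
Summing over $|\alpha|=k$ yields the first inequality, with a constant depending only on $\chi,k,n$.

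For the annulus case, the upper bound $\|\nabla^kf\|_{L^p}\lesssim2^{jk}\|f\|_{L^p}$ is the case $q=p$ of the first part. The real content is the reverse bound $\|f\|_{L^p}\lesssim2^{-jk}\|\nabla^kf\|_{L^p}$. Choose $\phi\in C_c^\infty$ with $\phi\equiv1$ on $\{c_0\le|\xi|\le C_0\}$, supported in $\{c_0/2\le|\xi|\le2C_0\}$.

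Since $|\xi|^{2k}=\sum_{|\alpha|=k}\frac{k!}{\alpha!}\xi^{2\alpha}$, I will write on the support of $\widehat f$
\[
\widehat f(\xi)=\sum_{|\alpha|=k}\frac{k!}{\alpha!}\,\frac{(-i\xi)^\alpha\phi(2^{-j}\xi)}{|\xi|^{2k}}\,\widehat{\partial^\alpha f}(\xi),
\]
up to harmless signs and powers of $i$. Define $\widehat{g_\alpha}(\xi)=(-i\xi)^\alpha|\xi|^{-2k}\phi(\xi)$. Each $g_\alpha$ is Schwartz because $\phi$ vanishes near the origin. Its rescaled version
\[
g_{\alpha,j}(x)=2^{-jk}2^{jn}g_\alpha(2^jx)
\]
has Fourier transform $(-i\xi)^\alpha|\xi|^{-2k}\phi(2^{-j}\xi)$, which gives $f=\sum_\alpha c_\alpha\, g_{\alpha,j}*\partial^\alpha f$. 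Young's inequality in $L^1$ then gives $\|f\|_{L^p}\le C2^{-jk}\sum_{|\alpha|=k}\|\partial^\alpha f\|_{L^p}$, with $C$ independent of $j$ because $\|g_{\alpha,j}\|_{L^1}=2^{-jk}\|g_\alpha\|_{L^1}$. This is the required lower bound.

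There is no genuine obstacle; the only points needing care are these:
\begin{enumerate}
\item The identity $f=\chi_j*f$ must be justified for tempered distributions with compactly supported Fourier transform. Such $f$ are smooth functions of polynomial growth, so the convolution with a Schwartz function is legitimate.
\item In the reverse inequality, the multiplier must be smooth. This holds because the annulus stays away from $\xi=0$, so $|\xi|^{-2k}\phi(\xi)$ has no singularity.
\item All constants depend only on $n,k,c_0,C_0$ and never on $j$, which the scaling argument makes explicit.
\end{enumerate}
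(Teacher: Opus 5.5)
Your proposal is correct. It is the standard scaling-and-Young argument, with the reverse inequality obtained by inverting $|\xi|^{2k}$ on the annulus, as in Bahouri--Chemin--Danchin (Lemma 2.1). The paper gives no proof of its own and simply defers to that reference, so your argument matches the intended one.
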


The Calder\'on--Zygmund estimate for \(\nabla u\) will be used in \(L^p\) only in the
range \(1<p<\infty\). The endpoint cases \(p=1,\infty\) will be handled in the Besov
scale, where dyadic localization provides boundedness of zero-order Fourier
multipliers for all \(1\leq p,r\leq\infty\).

\begin{lemma}[Biot--Savart and Besov estimates]\label{lem-biot-savart-besov}
	Let \(d=2\), and let \(u\) be a divergence-free vector field with vorticity
	\[
	w=\partial_1u_2-\partial_2u_1.
	\]
	Then
	\[
	\nabla u=\mathcal R w,
	\]
	where \(\mathcal R\) denotes a matrix of zero-order Calder\'on--Zygmund operators.
	Consequently, for \(1<p<\infty\),
	\[
	\|\nabla u\|_{L^p}
	\lesssim
	\|w\|_{L^p}.
	\]
	Moreover, since zero-order Fourier multipliers are bounded on homogeneous Besov
	spaces, for any \(s\in\mathbb R\) and \(1\leq p,r\leq\infty\),
	\[
	\|\nabla u\|_{\dot B^s_{p,r}}
	\lesssim
	\|w\|_{\dot B^s_{p,r}}.
	\]
	Equivalently, since
	\[
	u=\nabla^\perp\Delta^{-1}w,
	\]
	the Biot--Savart operator is a Fourier multiplier of order \(-1\). Hence, for any
	\(s\in\mathbb R\) and \(1\leq p,r\leq\infty\),
	\[
	\|u\|_{\dot B^{s+1}_{p,r}}
	\lesssim
	\|w\|_{\dot B^s_{p,r}}.
	\]
\end{lemma}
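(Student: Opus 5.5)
The plan is to argue at the level of the Fourier multiplier identities, so that the estimates in Lemma \ref{lem-biot-savart-besov} reduce to the boundedness of zero-order multipliers on dyadic blocks.

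\textbf{Step 1: the representation $\nabla u=\mathcal R w$.} Since $\diver u=0$ in two dimensions, I would write $u=\nabla^\perp\psi$ with $\Delta\psi=w$. On the Fourier side,
\[
\widehat u(\xi)=\frac{i\xi^\perp}{-|\xi|^2}\,\widehat w(\xi)
\]
(up to sign convention), which agrees with the Biot--Savart kernel $\frac{1}{2\pi}\frac{x^\perp}{|x|^2}$ recalled in the introduction. It follows that $\widehat{\partial_k u_l}(\xi)=m_{kl}(\xi)\widehat w(\xi)$, where each $m_{kl}$ is a linear combination of $\xi_a\xi_b/|\xi|^2$. These are smooth away from the origin and homogeneous of degree zero, so $\partial_k u_l=c\,R_aR_b w$ with $R_a$ the Riesz transforms.

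\textbf{Step 2: the $L^p$ bound.} For $1<p<\infty$, the estimate $\|\nabla u\|_{L^p}\lesssim\|w\|_{L^p}$ follows from the $L^p$ boundedness of Riesz transforms (Calder\'on--Zygmund theory, or the Mikhlin multiplier theorem).

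\textbf{Step 3: the Besov bounds.} Let $\tilde\varphi$ be a smooth bump equal to $1$ on the annulus supporting the Littlewood--Paley symbol $\varphi$. Then
\[
\dot\Delta_j\partial_k u_l=m_{kl}(D)\tilde\varphi(2^{-j}D)\,\dot\Delta_j w .
\]
The operator $m_{kl}(D)\tilde\varphi(2^{-j}D)$ is convolution with $2^{2j}K(2^j\cdot)$, where $K=\mathcal F^{-1}(m_{kl}\tilde\varphi)$. Because $m_{kl}\tilde\varphi$ is a compactly supported smooth function and $m_{kl}$ is homogeneous of degree zero, this identity holds with the same $K$ for every $j$, and $K\in\mathcal S$. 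Young's inequality then gives
\[
\|\dot\Delta_j\nabla u\|_{L^p}\le\|K\|_{L^1}\|\dot\Delta_j w\|_{L^p}
\]
uniformly in $j$, for every $1\le p\le\infty$. Multiplying by $2^{js}$ and taking the $\ell^r$ norm gives $\|\nabla u\|_{\dot B^s_{p,r}}\lesssim\|w\|_{\dot B^s_{p,r}}$.

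For the last estimate, I apply the same argument to the symbol $\xi^\perp/|\xi|^2$, which is homogeneous of degree $-1$. Now $2^{j}\,\mathcal F^{-1}\bigl(\tfrac{\xi^\perp}{|\xi|^2}\tilde\varphi(2^{-j}\xi)\bigr)$ has $L^1$ norm independent of $j$, so $\|\dot\Delta_j u\|_{L^p}\lesssim 2^{-j}\|\dot\Delta_j w\|_{L^p}$. Summing with weight $2^{j(s+1)}$ yields $\|u\|_{\dot B^{s+1}_{p,r}}\lesssim\|w\|_{\dot B^s_{p,r}}$. Alternatively, one can combine the $\nabla u$ bound with Bernstein's inequality (Lemma \ref{lem-bernstein}) on each block.

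\textbf{Main obstacle.} The only subtle point is the meaning of $u$ in the homogeneous setting, which is defined only modulo polynomials. The symbol $\xi^\perp/|\xi|^2$ is singular at $0$, so I would interpret the identities on $\mathcal S'/\mathcal P$, equivalently at the level of the blocks $\dot\Delta_j$. This is consistent with the convention for $\dot B^s_{p,r}$ adopted above. For the compactly supported smooth data actually used, $u$ is the genuine Biot--Savart velocity, which decays at infinity, so no polynomial ambiguity arises.
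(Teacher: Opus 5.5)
Your proposal is correct and follows essentially the route the paper indicates. The paper gives no separate proof: it justifies the lemma by the multiplier identities $\nabla u=\mathcal R w$ and $u=\nabla^\perp\Delta^{-1}w$, Calder\'on--Zygmund theory for $1<p<\infty$, and the standard boundedness of homogeneous multipliers on dyadic blocks (citing Bahouri--Chemin--Danchin); your scaling-plus-Young argument writes out that standard fact, and your remark on interpreting $u$ modulo polynomials makes explicit a point the paper leaves implicit.
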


\subsection{Construction of Initial Data}\label{sec:2.2}

Taking the curl of the velocity equation in \eqref{boussinesq-orginal-0}, we obtain
the vorticity equation
\begin{align}\label{boussinesq}
	\partial_t w-\mu \Delta w+u\cdot\nabla w=\partial_{1}\rho .
\end{align}
In what follows, we work in polar coordinates \((r,\theta)\), where
\[
x=(r\cos\theta,r\sin\theta).
\]
We denote the associated orthonormal frame by
\[
\mathbf e_r=(\cos\theta,\sin\theta),
\qquad
\mathbf e_\theta=(-\sin\theta,\cos\theta).
\]
For a vector field \(u\), we write
\[
u=u_r\mathbf e_r+u_\theta\mathbf e_\theta,
\qquad
u_r=u\cdot\mathbf e_r,
\qquad
u_\theta=u\cdot\mathbf e_\theta .
\]

We now choose the radial profiles used in the construction. The profile \(f\) will be
used as a radial vorticity profile and generates a stationary angular velocity field,
while \(g\) will be placed in a region where this angular velocity has a nondegenerate
radial shear. The following lemma makes this choice precise; its proof is postponed to
Appendix \ref{app-choice-fg}.

\begin{lemma}\label{lem-choice-fg-inviscid}
There exist one-dimensional profiles \(f,g\in C_c^\infty((0,\infty))\), with
\(f\not\equiv0\) and \(g\not\equiv0\), such that the corresponding radial functions
\(f(r)\) and \(g(r)\) on \(\mathbb R^2\) satisfy
\[
\operatorname{supp}f\cap \operatorname{supp}g=\emptyset,
\qquad
\int_0^\infty f(r)\,dr
=
\int_0^\infty r f(r)\,dr
=0,
\qquad
f^{(-2)}\in C_c^\infty((0,\infty)).
\]
Moreover, if \(u[f]\) denotes the velocity generated by the radial vorticity profile
\(f(r)\), and
\[
h(r)\define \frac{u_\theta[f](r)}{r},
\]
then there exists \(c_0>0\) such that
\[
|h'(r)|\geq c_0
\qquad \text{on } \operatorname{supp}g .
\]
\end{lemma}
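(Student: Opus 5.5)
The plan is to build \(f\) from a compactly supported double primitive, so that the moment conditions hold automatically, and then read off \(h\) from the explicit radial Biot--Savart formula.

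\textbf{Choice of \(f\).} Fix a nonzero \(F\in C_c^\infty((1,2))\) and set \(f=F''\). Then \(f\in C_c^\infty((1,2))\), \(f\not\equiv0\), and \(f^{(-2)}=F\in C_c^\infty((0,\infty))\). Integrating by parts, with no boundary terms since \(F\) is compactly supported in \((0,\infty)\), gives
\[
\int_0^\infty f\,dr=\bigl[F'\bigr]_0^\infty=0,
\qquad
\int_0^\infty rf\,dr=\bigl[rF'-F\bigr]_0^\infty=0 .
\]

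\textbf{Formula for \(h\).} For a radial vorticity \(f(r)\), the Biot--Savart law gives a purely angular velocity with
\[
u_\theta[f](r)=\frac1r\int_0^r sf(s)\,ds .
\]
This follows from Stokes' theorem: the circulation \(2\pi r\,u_\theta\) equals the integral of the vorticity over the disc of radius \(r\). Hence
\[
h(r)=\frac{m(r)}{r^2},\qquad m(r)=\int_0^r sf(s)\,ds .
\]
For \(r\le1\) we have \(m=0\). For \(r\ge2\) we have \(m(r)=\int_0^\infty sf\,ds=0\) by the second moment condition. So \(h\) vanishes outside \([1,2]\), and \(h\in C_c^\infty((1,2))\).

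\textbf{Choosing \(g\).} Since \(h\) is smooth and compactly supported, it is not identically zero exactly when \(m\not\equiv0\) on \((1,2)\). Suppose instead \(m\equiv0\). Then \(m'=rf\equiv0\), so \(f\equiv0\), which contradicts \(F\ne0\) (a nonzero compactly supported \(F\) cannot have \(F''\equiv0\)). Thus \(h\not\equiv0\). A nonconstant smooth function has \(h'(r_0)\neq0\) at some point, and by continuity \(|h'|\ge c_0:=|h'(r_0)|/2>0\) on a small interval \(I\ni r_0\).

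It remains to make \(\operatorname{supp}g\) disjoint from \(\operatorname{supp}f\), and this is the only delicate point. A priori \(r_0\) may lie inside \(\operatorname{supp}f\). Outside \(\operatorname{supp}f\), however, \(m\) is locally constant, so \(h=c/r^2\) there and \(h'=-2c/r^3\). I therefore choose \(F\) so that \(\operatorname{supp}f\) has a gap, for example
\[
F=F_1+F_2,\qquad F_1\in C_c^\infty((1,1.4)),\quad F_2\in C_c^\infty((1.6,2)),
\]
with \(\int_1^{1.4}sF_1''\,ds=-F_1(1.4)+\dots\); more simply, compute \(m\) on the gap directly. On \([1.4,1.6]\),
\[
m(r)=\int_0^{1.4}sF_1''\,ds=\bigl[sF_1'-F_1\bigr]_0^{1.4}=0,
\]
so the gap gives nothing if \(F_1\) itself is compactly supported in \((1,1.4)\).

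So I instead drop compact support of the two pieces individually and keep only compact support of \(F\). Take \(F\in C_c^\infty((1,2))\) with \(F\equiv1\) on \([1.4,1.6]\). Then \(f=F''\) vanishes on \([1.4,1.6]\). On that interval,
\[
m(r)=\int_0^{r}sF''\,ds=\bigl[sF'-F\bigr]_0^{r}=-1,
\]
since \(F'(r)=0\) and \(F(r)=1\) there. Hence \(h=-1/r^2\) and \(h'=2/r^3\ge 2/1.6^3\) on \([1.4,1.6]\).

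Finally, choose a nonzero \(g\in C_c^\infty((1.45,1.55))\). Then \(\operatorname{supp}g\cap\operatorname{supp}f=\emptyset\), and \(|h'|\ge c_0>0\) on \(\operatorname{supp}g\). All the required properties are checked directly from these formulas. The main point, that \(h'\) is nondegenerate in a vorticity-free gap, is settled by the plateau choice \(F\equiv1\).
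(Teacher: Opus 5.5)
Your proof is correct. It uses the same mechanism as the paper: put $g$ in a vorticity-free annulus. There $u_\theta[f]=c/r$, with $2\pi c$ the enclosed circulation, so $h=c/r^2$ and $|h'|=2|c|/r^3$ is bounded below once $c\neq0$.

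You differ from the paper in how $f$ is built.

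\textbf{The paper's construction.} It takes $f=\psi_0+a_1\psi_1+a_2\psi_2$, with nonnegative bumps in $(1/2,1)$, $(4,5)$ and $(6,7)$. The coefficients $a_1,a_2$ solve the $2\times2$ moment system, which is invertible because the bumps have different barycentres. A Fubini argument then shows that the moment conditions force $f^{(-2)}$ to be compactly supported. Finally $g$ is placed in $(2,3)$, where the enclosed circulation $\int\rho\psi_0\,d\rho$ is positive by positivity of $\psi_0$.

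\textbf{Your construction.} You parametrize by $F=f^{(-2)}$ itself and set $f=F''$. Compact support of $f^{(-2)}$ is then built in, and both moments vanish by integration by parts, which is the converse of the paper's Fubini step. The identity $m(r)=rF'(r)-F(r)$ turns the plateau $F\equiv1$ directly into $m=-1$ there, with no linear algebra.

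\textbf{Comparison.} The two constructions differ only in parametrization. The paper's $f^{(-2)}$ is affine on the gap, namely $A_0r-B_0$ on $(1,4)$, which gives $m=B_0$ there; yours is constant on the gap. Your route is shorter. It also makes visible that the hypotheses on $f$ are equivalent to $f=F''$ with $F\in C_c^\infty((0,\infty))$. The paper's route makes the physical picture explicit: a positive inner vortex ring shielded by counter-rotating outer rings.

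In a write-up, delete the abandoned $F=F_1+F_2$ attempt and the preliminary argument through a point $r_0$ with $h'(r_0)\neq0$. The plateau construction alone is a complete proof.
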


With the profiles \(f\) and \(g\) fixed as in Lemma \ref{lem-choice-fg-inviscid}, we
now prescribe the initial data. Let \(\lambda\gg1\) be a large parameter. The initial
vorticity is defined by
\begin{align}\label{initial-vorticity-inviscid}
	w(0,r)
	=
	\lambda^{\frac{2}{p}+1-\beta} (\log\log \lambda)^{-\frac{|\beta|}{2+|\beta|}}f(\lambda r).
\end{align}
The initial density is chosen according to the sign of \(\beta\). If \(\beta>0\), we set
\begin{align}\label{initial-rho-exact=app}
	\rho(0,r,\theta)
	=
	\lambda^{\frac{2}{p}-\beta}
	(\log\log\lambda)^{-\frac{\beta}{2+\beta}}
	g(\lambda r)\cos\theta .
\end{align}
If \(\beta<0\), we set
\begin{align}\label{initial-rho-negative-inviscid}
	\rho(0,r,\theta)
	=
	\lambda^{\frac{2}{p}-\beta-k}
	(\log\log\lambda)^{-\frac{\beta+2k}{2-\beta}}
	\partial_r^k
	\left[
	g(\lambda r)
	\cos\left(
	\theta
	+
	t^*\lambda^{\frac{2}{p}-\beta}(\log\log \lambda)^{\frac{\beta}{2-\beta}}
	\frac{u_\theta[f](\lambda r)}{r}
	\right)
	\right],
\end{align}
where
\begin{align}\label{definition-tstar-k-inviscid}
	t^*
	=
	\lambda^{-\frac{2}{p}-1+\beta}\log\log\lambda,
	\qquad
	k=-\lfloor \beta\rfloor+2.
\end{align}
Since
\[
\frac{u_\theta[f](\lambda r)}{r}
=
\lambda\frac{u_\theta[f](\lambda r)}{\lambda r}
=
\lambda h(\lambda r),
\]
the oscillatory phase in \eqref{initial-rho-negative-inviscid} can equivalently be
written as
\[
\theta+t^*\lambda^{\frac{2}{p}-\beta+1}(\log\log \lambda)^{-\frac{|\beta|}{2+|\beta|}}h(\lambda r).
\]
The nondegeneracy condition in Lemma \ref{lem-choice-fg-inviscid} will be used
later to obtain the lower bound for the density norm at the inflation time.

\section{The Inviscid Case}\label{sec:3}

Throughout this section, we consider the inviscid Boussinesq system, namely
\eqref{boussinesq-orginal-0} with \(\mu=\nu=0\), and prove Theorem
\ref{thm}.

\subsection{The Approximate Solution}\label{sec:3.1}

In this subsection, we construct a family of approximate solutions and derive the
basic estimates needed in the proof of Theorem \ref{thm}.

We begin with the vorticity component. Let \(\lambda\gg1\) be a large parameter. We
define the approximate vorticity by
\begin{align}\label{app-w-expression}
	\bar w(t)=w(0)
	=
	\lambda^{\frac{2}{p}+1-\beta}
	(\log\log \lambda)^{-\frac{|\beta|}{2+|\beta|}}
	f(\lambda r).
\end{align}
Since \(\bar w\) is radial, the velocity \(u[\bar w]\) generated by the Biot--Savart
law,
\begin{align*}
		u[\omega](x)
		=
		\frac{1}{2\pi}
		\int_{\mathbb R^2}
		\frac{(x-y)^\perp \omega(y)}{|x-y|^2}\,dy,
\end{align*}
has no radial component. Hence \(u_r[\bar w]=0\), and therefore
\[
u[\bar w]\cdot\nabla \bar w
=
\frac{1}{r}u_\theta[\bar w]\partial_\theta \bar w
=0.
\]
It follows that \(\bar w\) satisfies
\begin{align*}
	\partial_t\bar w+u[\bar w]\cdot\nabla\bar w=0.
\end{align*}

We next define the approximate density. For \(r>0\), define
\begin{align}\label{definition-angular-frequency}
	\Omega_\lambda(r)
	:=
	\lambda^{\frac{2}{p}-\beta}
	(\log\log\lambda)^{-\frac{|\beta|}{2+|\beta|}}
	\frac{u_\theta[f](\lambda r)}{r}.
\end{align}
Since \(f\) is supported away from the origin, the radial Biot--Savart
formula shows that \(u_\theta[f](r)=0\) for all sufficiently small \(r\).
Therefore, \(\Omega_\lambda\) extends smoothly to \(r=0\), and we set
\[
\Omega_\lambda(0):=0.
\]
Since the velocity generated by \(\bar w\)
has no radial component, the corresponding transport equation can be solved
explicitly:
\begin{align*}
	\begin{cases}
		\partial_t\bar\rho+u[\bar w]\cdot\nabla\bar\rho=0,\\[1mm]
		\bar\rho(0)=\rho(0).
	\end{cases}
\end{align*}
Indeed,
\[
u[\bar w]\cdot\nabla\bar\rho
=
\frac{1}{r}u_\theta[\bar w]\partial_\theta\bar\rho .
\]
Thus the density is transported along angular characteristics.

For \(\beta>0\), we set
\begin{align}\label{app-rho-expression}
	\bar\rho(t)
	=
	\lambda^{\frac{2}{p}-\beta}
	(\log\log \lambda)^{-\frac{\beta}{2+\beta}}
		g(\lambda r)
		\cos \left(
		\theta
		-
		t\Omega_\lambda(r)
		\right).
\end{align}
For \(\beta<0\), we instead use a premixed density:
\begin{align}\label{app-rho-expre-<0}
		\bar\rho_0(r,\vartheta)
		&=
	\lambda^{\frac{2}{p}-\beta-k}
	(\log\log \lambda)^{-\frac{\beta+2k}{2-\beta}}
	\partial_r^k
	\left[
		g(\lambda r)
		\cos \left(
		\vartheta
		+
		t^*\Omega_\lambda(r)
		\right)
		\right],\\
		\bar\rho(t,r,\theta)
		&=
		\bar\rho_0\bigl(r,\theta-t\Omega_\lambda(r)\bigr).
\end{align}
Here $\partial_r^k$ in the definition of $\bar\rho_0$ acts with $\vartheta$ fixed,
\(k=-\lfloor\beta\rfloor+2\), and \(t^*\) is the time at which norm inflation
will occur. 
The different forms of \(\bar\rho\) for positive and negative \(\beta\)
reflect the different mechanisms responsible for the growth: the creation of rapid
radial oscillations in the positive regularity case, and premixing followed by
unmixing in the negative regularity case.

\subsubsection{Estimates for the Approximate Solution}
\label{sec:3.1.1}

Since \(\bar w\) is stationary and its expression is independent of the sign of
\(\beta\), its estimates are more direct than those for \(\bar\rho\). We first treat
the vorticity component.

\begin{lemma}\label{app-w-bound-s geq 0}
	For any \(s>-1\) and \(1\leq q\leq\infty\), there holds
	\begin{align}\label{w-bound-app}
		\|\bar w(t)\|_{\dot B^s_{q,1}}
		\lesssim
		\lambda^{s+1+\frac{2}{p}-\frac{2}{q}-\beta}
		(\log\log \lambda)^{-\frac{|\beta|}{2+|\beta|}},
	\end{align}
	for all \(0\leq t\leq t^*\).
\end{lemma}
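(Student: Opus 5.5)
Since $\bar w(t)=w(0)$ does not depend on $t$, it suffices to estimate $w(0)$. We reduce everything to a single fixed profile by scaling. Write $\bar w=A_\lambda f(\lambda\cdot)$ with
\[
A_\lambda=\lambda^{\frac{2}{p}+1-\beta}(\log\log\lambda)^{-\frac{|\beta|}{2+|\beta|}} .
\]
The homogeneous Besov norm scales exactly under dilations:
\[
\|f(\lambda\cdot)\|_{\dot B^s_{q,1}(\mathbb R^2)}\sim\lambda^{s-\frac{2}{q}}\|f\|_{\dot B^s_{q,1}},
\]
with equality for $\lambda$ a power of $2$; for general $\lambda$ the dyadic blocks shift by a bounded amount, which costs only a constant. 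This gives
\[
\|\bar w(t)\|_{\dot B^s_{q,1}}\lesssim A_\lambda\lambda^{s-\frac2q}\|f\|_{\dot B^s_{q,1}},
\]
which is exactly the right-hand side of \eqref{w-bound-app}. It remains to show that $\|f\|_{\dot B^s_{q,1}}<\infty$ for every $s>-1$ and $1\le q\le\infty$, where $f$ is the fixed radial function from Lemma \ref{lem-choice-fg-inviscid}.

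\textbf{High frequencies.} Here $f\in C_c^\infty(\mathbb R^2)$ and its support avoids the origin. For $j\ge0$ and any integer $N$, the Bernstein inequality (Lemma \ref{lem-bernstein}) gives
\[
\|\dot\Delta_j f\|_{L^q}\lesssim 2^{-jN}\|\nabla^N f\|_{L^q}.
\]
Taking $N>s$ makes $\sum_{j\ge0}2^{js}\|\dot\Delta_j f\|_{L^q}$ finite.

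\textbf{Low frequencies.} For $j<0$ we use the vanishing moments of $f$. In two dimensions the radial conditions translate as follows: $\int_{\mathbb R^2}f\,dx=2\pi\int_0^\infty rf(r)\,dr=0$, and every first moment $\int x_if\,dx$ vanishes by radial symmetry. The hypothesis $f^{(-2)}\in C_c^\infty$ presumably encodes even more cancellation. The simplest way to use it is to write $f=\Delta F$ with $F$ radial and compactly supported. This needs $\int_0^\infty rf\,dr=0$, so that $rF'(r)=\int_0^r\sigma f(\sigma)\,d\sigma$ vanishes for large $r$, together with a further condition to make $F$ itself compactly supported.

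\textbf{Main obstacle.} The delicate step is identifying exactly what cancellation is available and whether it suffices for the full range $s>-1$. If only $\hat f(0)=0$ and $\nabla\hat f(0)=0$ hold, then $\hat f(\xi)=O(|\xi|^2)$ and
\[
\|\dot\Delta_j f\|_{L^q}\lesssim 2^{2j}\,2^{2j(1-\frac1q)}=2^{j(4-\frac2q)}
\]
(from Bernstein, $L^1\to L^q$). The low-frequency sum then converges once $s+4-\frac2q>0$, which holds for $s>-1$ even when $q=1$, since $s+2>1>0$. If instead I can write $f=\Delta F$ with $F\in C_c^\infty$, then $\|\dot\Delta_jf\|_{L^q}\lesssim 2^{2j}\|F\|_{L^q}$, and the sum converges as soon as $s>-2$. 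I would therefore first try to extract the representation $f=\Delta F$ (or $f=\partial^2 G$) with compactly supported $F$ or $G$ from the stated properties. If that fails, I would fall back on the moment expansion, which already yields $\|f\|_{\dot B^s_{q,1}}<\infty$ for $s>-1$. Together with the high-frequency bound this completes the proof.
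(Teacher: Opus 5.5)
Your proof is correct, but it follows a different route from the paper's.

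\textbf{The paper's route.} The paper never isolates the profile. For $s>0$ it interpolates between the explicit $L^q$ and $\dot W^{m,q}$ bounds on $\bar w$. For $-1<s<0$ it pairs $\bar w$ with $\phi\in\mathcal S_0$ and integrates by parts twice in $r$. This uses that $f^{(-1)}$ and $f^{(-2)}$ are compactly supported, so both conditions $\int_0^\infty f\,dr=\int_0^\infty rf\,dr=0$ enter. Duality then gives a $\dot B^{-1}_{q,\infty}$ bound, which is interpolated with $L^q$. The endpoint $s=0$ is handled by interpolating between $\dot B^{-1}_{q,\infty}$ and $\dot B^{1}_{q,\infty}$.

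\textbf{Your route.} You use the exact dilation behaviour of homogeneous Besov norms to reduce everything to $\|f\|_{\dot B^s_{q,1}}<\infty$ for the fixed profile. You then check this by a direct Littlewood--Paley count. This is shorter and gives more. It needs only $\int_{\mathbb R^2}f\,dx=2\pi\int_0^\infty rf\,dr=0$ and radial symmetry, and it yields the bound for every $s>-2$. The paper's duality and integration-by-parts machinery earns its keep elsewhere. It is reused for the premixed density when $\beta<0$, whose profile depends on $\lambda$ through the phase $(\log\log\lambda)^{\frac{2}{2-\beta}}h(\lambda r)$. That profile therefore cannot be reduced to a fixed function by scaling.

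\textbf{Two small corrections.}
\begin{enumerate}
\item Your representation $f=\Delta F$ needs no further condition. The function $rF'(r)=\int_0^r\sigma f(\sigma)\,d\sigma$ vanishes near $0$, and, since $\int_0^\infty rf\,dr=0$, also for large $r$. Hence $F$ is constant near the origin, so it is smooth there, and it can be normalized to vanish outside a ball. Thus $F\in C_c^\infty(\mathbb R^2)$. An extra condition such as $\int_0^\infty r\log r\,f(r)\,dr=0$ would only be needed to make $F$ vanish near the origin, which plays no role here.
\item The pointwise bound $\hat f(\xi)=O(|\xi|^2)$ alone gives $L^\infty$ and $L^2$ bounds on $\dot\Delta_j f$. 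It does not give the $L^1$ bound $\|\dot\Delta_jf\|_{L^1}\lesssim 2^{2j}$ that you feed into Bernstein. That bound should come from $f=\Delta F$, or from Taylor expanding the kernel of $\dot\Delta_j$ against the vanishing moments.
\end{enumerate}
With $f=\Delta F$ you get $\|\dot\Delta_jf\|_{L^q}\lesssim 2^{2j}\|F\|_{L^q}$ directly. The low-frequency sum then converges for $s>-2$, so the fallback is unnecessary.
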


\begin{proof}
	We distinguish the sign of the regularity index \(s\).
	
	\medskip
	\noindent{\bf Step 1. The case \(s>0\).}
	From the explicit expression of \(\bar w\), we have
	\begin{align*}
		\|\bar w(t)\|_{L^q}
		\lesssim
		\lambda^{1+\frac{2}{p}-\frac{2}{q}-\beta}
		(\log\log \lambda)^{-\frac{|\beta|}{2+|\beta|}}.
	\end{align*}
	Moreover, for every integer \(k\geq1\),
	\begin{align*}
		\|\bar w(t)\|_{\dot W^{k,q}}
		\lesssim
		\lambda^{k+1+\frac{2}{p}-\frac{2}{q}-\beta}
		(\log\log \lambda)^{-\frac{|\beta|}{2+|\beta|}}.
	\end{align*}
	Let \(m=\lfloor s\rfloor+1\). Then \(m>s\). By Besov interpolation and the
	embeddings
	\[
	L^q\hookrightarrow \dot B^0_{q,\infty},
	\qquad
	\dot W^{m,q}\hookrightarrow \dot B^m_{q,\infty},
	\]
	we obtain
	\begin{align*}
		\|\bar w(t)\|_{\dot B^s_{q,1}}
		&\lesssim
		\|\bar w(t)\|_{\dot B^0_{q,\infty}}^{1-\frac{s}{m}}
		\|\bar w(t)\|_{\dot B^m_{q,\infty}}^{\frac{s}{m}}\\
		&\lesssim
		\|\bar w(t)\|_{L^q}^{1-\frac{s}{m}}
		\|\bar w(t)\|_{\dot W^{m,q}}^{\frac{s}{m}}\\
		&\lesssim
		\lambda^{s+1+\frac{2}{p}-\frac{2}{q}-\beta}
		(\log\log \lambda)^{-\frac{|\beta|}{2+|\beta|}}.
	\end{align*}
	This proves \eqref{w-bound-app} for \(s>0\).
	
	\medskip
	\noindent{\bf Step 2. The case \(-1<s<0\).}
		Let \(\phi\in\mathcal S_0(\mathbb R^2)\). We estimate the duality pairing
	\[
	\int_{\mathbb R^2}\bar w(t,x)\phi(x)\,dx.
	\]
	Using polar coordinates and integrating by parts in \(r\), we get
	\begin{align}\nonumber
		&\int_{\mathbb R^2}\bar w(t,x)\phi(x)\,dx \\\nonumber
		&=
		\lambda^{\frac{2}{p}+1-\beta}
		(\log\log \lambda)^{-\frac{|\beta|}{2+|\beta|}}
		\int_0^\infty\int_{-\pi}^{\pi}
		f(\lambda r)\phi(r,\theta)\,r\,dr\,d\theta \\\nonumber
		&=
		-\lambda^{\frac{2}{p}-\beta}
		(\log\log \lambda)^{-\frac{|\beta|}{2+|\beta|}}
		\int_0^\infty\int_{-\pi}^{\pi}
		f^{(-1)}(\lambda r)
		\partial_r\big(\phi(r,\theta)r\big)\,dr\,d\theta \\\nonumber
		&=
		-\lambda^{\frac{2}{p}-\beta}
		(\log\log \lambda)^{-\frac{|\beta|}{2+|\beta|}}
		\int_0^\infty\int_{-\pi}^{\pi}
		f^{(-1)}(\lambda r)
		\big(r\partial_r\phi(r,\theta)+\phi(r,\theta)\big)\,dr\,d\theta \\\label{J1J2}
		&\define J_1+J_2 .
	\end{align}
	For \(J_1\), we estimate directly:
	\begin{align}\nonumber
		|J_1|
		&\lesssim
		\lambda^{\frac{2}{p}-\beta}
		(\log\log \lambda)^{-\frac{|\beta|}{2+|\beta|}}
		\int_0^\infty\int_{-\pi}^{\pi}
		|f^{(-1)}(\lambda r)|\,r\,|\partial_r\phi|\,dr\,d\theta \\\label{J1}
		&\lesssim
		\lambda^{\frac{2}{p}-\frac{2}{q}-\beta}
		(\log\log \lambda)^{-\frac{|\beta|}{2+|\beta|}}
		\|\phi\|_{\dot B^1_{q',1}}.
	\end{align}
	For \(J_2\), we integrate by parts once more and use the compact support of
	\(f^{(-2)}\):
	\begin{align}\nonumber
		J_2
		&=
		-\lambda^{\frac{2}{p}-\beta}
		(\log\log \lambda)^{-\frac{|\beta|}{2+|\beta|}}
		\int_0^\infty\int_{-\pi}^{\pi}
		f^{(-1)}(\lambda r)\phi(r,\theta)\,dr\,d\theta \\\nonumber
		&=
		\lambda^{\frac{2}{p}-1-\beta}
		(\log\log \lambda)^{-\frac{|\beta|}{2+|\beta|}}
		\int_0^\infty\int_{-\pi}^{\pi}
		\frac1r f^{(-2)}(\lambda r)\partial_r\phi(r,\theta)\,r\,dr\,d\theta .
	\end{align}
	Since \(f^{(-2)}(\lambda r)\) is supported where \(r\sim\lambda^{-1}\), the factor
	\(\frac{1}{r}\) contributes one power of \(\lambda\). Hence
	\begin{align}\label{J2}
		|J_2|
		\lesssim
		\lambda^{\frac{2}{p}-\frac{2}{q}-\beta}
		(\log\log \lambda)^{-\frac{|\beta|}{2+|\beta|}}
		\|\phi\|_{\dot B^1_{q',1}}.
	\end{align}
	Combining \eqref{J1}, \eqref{J2}, and \eqref{J1J2}, and using the Besov duality
	in Proposition \ref{prop-besov-duality}, we obtain
	\begin{align}\label{bar-w-B-1}
		\|\bar w(t)\|_{\dot B^{-1}_{q,\infty}}
		\lesssim
		\lambda^{\frac{2}{p}-\frac{2}{q}-\beta}
		(\log\log \lambda)^{-\frac{|\beta|}{2+|\beta|}}.
	\end{align}
	Since \(L^q\hookrightarrow \dot B^0_{q,\infty}\), interpolation between
	\(\dot B^{-1}_{q,\infty}\) and \(\dot B^0_{q,\infty}\) gives
	\begin{align*}
		\|\bar w(t)\|_{\dot B^s_{q,1}}
		&\lesssim
		\|\bar w(t)\|_{\dot B^{-1}_{q,\infty}}^{-s}
		\|\bar w(t)\|_{\dot B^0_{q,\infty}}^{1+s}\\
		&\lesssim
		\|\bar w(t)\|_{\dot B^{-1}_{q,\infty}}^{-s}
		\|\bar w(t)\|_{L^q}^{1+s}\\
		&\lesssim
		\lambda^{s+1+\frac{2}{p}-\frac{2}{q}-\beta}
		(\log\log \lambda)^{-\frac{|\beta|}{2+|\beta|}}.
	\end{align*}
	This proves \eqref{w-bound-app} for \(-1<s<0\).
	
	\medskip
	\noindent{\bf Step 3. The endpoint \(s=0\).}
	The endpoint is treated separately in order to avoid using noninteger order
	estimates at \(q=1,\infty\). From \eqref{bar-w-B-1},
	\[
	\|\bar w(t)\|_{\dot B^{-1}_{q,\infty}}
	\lesssim
	\lambda^{\frac{2}{p}-\frac{2}{q}-\beta}
	(\log\log \lambda)^{-\frac{|\beta|}{2+|\beta|}}.
	\]
	On the other hand, the integer order estimate from Step 1 with \(k=1\) gives
	\[
	\|\bar w(t)\|_{\dot W^{1,q}}
	\lesssim
	\lambda^{2+\frac{2}{p}-\frac{2}{q}-\beta}
	(\log\log \lambda)^{-\frac{|\beta|}{2+|\beta|}}.
	\]
	Hence, by \(\dot W^{1,q}\hookrightarrow \dot B^1_{q,\infty}\),
	\[
	\|\bar w(t)\|_{\dot B^1_{q,\infty}}
	\lesssim
	\lambda^{2+\frac{2}{p}-\frac{2}{q}-\beta}
	(\log\log \lambda)^{-\frac{|\beta|}{2+|\beta|}}.
	\]
	Interpolating between \(\dot B^{-1}_{q,\infty}\) and \(\dot B^1_{q,\infty}\), we get
	\begin{align*}
		\|\bar w(t)\|_{\dot B^0_{q,1}}
		&\lesssim
		\|\bar w(t)\|_{\dot B^{-1}_{q,\infty}}^{\frac12}
		\|\bar w(t)\|_{\dot B^1_{q,\infty}}^{\frac12}\\
		&\lesssim
		\lambda^{1+\frac{2}{p}-\frac{2}{q}-\beta}
		(\log\log \lambda)^{-\frac{|\beta|}{2+|\beta|}}.
	\end{align*}
	This is \eqref{w-bound-app} when \(s=0\). The proof is complete.
\end{proof}

As an immediate consequence of Lemma \ref{app-w-bound-s geq 0} and the Biot--Savart
estimate in Lemma \ref{lem-biot-savart-besov},
\[
\|u[\bar w](t)\|_{\dot B^s_{q,1}}
\lesssim
\|\bar w(t)\|_{\dot B^{s-1}_{q,1}},
\]
we obtain the following estimate for the approximate velocity.

\begin{coro}\label{app-u-bound-s geq 0}
	For \(s>0\) and \(1\leq q\leq\infty\), there holds
	\begin{align}\label{estimate-bar-u}
		\|u[\bar w](t)\|_{\dot B^s_{q,1}}
		\lesssim
		\lambda^{s+\frac{2}{p}-\frac{2}{q}-\beta}
		(\log\log \lambda)^{-\frac{|\beta|}{2+|\beta|}},
	\end{align}
	for all \(0\leq t\leq t^*\).
\end{coro}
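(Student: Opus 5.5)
The plan is to deduce the estimate directly from Lemma \ref{app-w-bound-s geq 0} by means of the order $-1$ Biot--Savart bound in Lemma \ref{lem-biot-savart-besov}. Fix $s>0$ and $1\leq q\leq\infty$. Since $u[\bar w]=\nabla^\perp\Delta^{-1}\bar w$ is a Fourier multiplier of order $-1$, Lemma \ref{lem-biot-savart-besov} with regularity index $s-1$ and summability index $r=1$ gives
\[
\|u[\bar w](t)\|_{\dot B^{s}_{q,1}}\lesssim\|\bar w(t)\|_{\dot B^{s-1}_{q,1}}.
\]
This holds for every $1\leq q\leq\infty$, including the endpoints, because the dyadic blocks commute with the multiplier and the multiplier is bounded on each dyadic annulus. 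The only requirement is that $\bar w(t)$ defines an element of $\dot B^{s-1}_{q,1}$, so that the homogeneous realization of $u[\bar w]$ agrees with the Biot--Savart integral.

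Next I would apply Lemma \ref{app-w-bound-s geq 0} with index $s-1$. Since $s>0$, we have $s-1>-1$, which is exactly the admissible range of that lemma. This yields
\[
\|\bar w(t)\|_{\dot B^{s-1}_{q,1}}\lesssim\lambda^{(s-1)+1+\frac2p-\frac2q-\beta}(\log\log\lambda)^{-\frac{|\beta|}{2+|\beta|}}
=\lambda^{s+\frac2p-\frac2q-\beta}(\log\log\lambda)^{-\frac{|\beta|}{2+|\beta|}},
\]
uniformly in $0\leq t\leq t^*$, because $\bar w$ is stationary. Combining the two displays gives \eqref{estimate-bar-u}.

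The only subtle point is the identification issue just mentioned. For $s-1<0$ the lemma itself shows that $\bar w\in\dot B^{s-1}_{q,1}$. Moreover, $\bar w$ is compactly supported with $\int_0^\infty rf\,dr=0$, so $\bar w$ has zero mean, and the Biot--Savart velocity decays like $|x|^{-2}$. Indeed, since $f$ has zero total mass, $u_\theta[\bar w]$ vanishes outside the support, so the velocity is even compactly supported. Hence no polynomial ambiguity arises, and the homogeneous Besov bound applies to the genuine velocity field.
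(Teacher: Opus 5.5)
Your proposal is correct and is exactly the paper's argument. The paper derives the corollary in one line from the order \(-1\) Biot--Savart bound \(\|u[\bar w](t)\|_{\dot B^s_{q,1}}\lesssim\|\bar w(t)\|_{\dot B^{s-1}_{q,1}}\) of Lemma \ref{lem-biot-savart-besov}, combined with Lemma \ref{app-w-bound-s geq 0} at the admissible index \(s-1>-1\).

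Your closing remark on identification is harmless but not needed, since the dyadic blocks only see \(\widehat{u[\bar w]}\) away from the origin, where it equals the multiplier times \(\widehat{\bar w}\). Note also a small imprecision: the velocity vanishes only for \(\lambda r\) beyond the outer edge of \(\operatorname{supp} f\) (and inside its inner edge), not everywhere outside \(\operatorname{supp}\bar w\). In the gap between the annuli, on \(\operatorname{supp} g(\lambda\cdot)\), the velocity is nonzero, and this is precisely the shear the construction exploits.
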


We next estimate the approximate density. Unlike \(\bar w\), the expression of
\(\bar\rho\) depends on the sign of \(\beta\), and the two cases are treated
separately.

\begin{lemma}\label{app-rho-bound-s geq 0}
	For any \(s>0\) and \(1\leq q\leq\infty\), there holds
	\begin{align}\label{estimate-bar-rho}
		\|\bar\rho(t)\|_{\dot B^{s}_{q,1}}
		\lesssim
		\lambda^{s+\frac{2}{p}-\frac{2}{q}-\beta}
		(\log\log\lambda)^{\frac{2s-\beta}{2+|\beta|}},
	\end{align}
	for all \(0\leq t\leq t^*\).
\end{lemma}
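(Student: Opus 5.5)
We will argue exactly as in Step 1 of the proof of Lemma \ref{app-w-bound-s geq 0}. First we bound the \(L^q\) norm and the integer-order norms \(\|\bar\rho(t)\|_{\dot W^{m,q}}\), \(m\geq1\), from the explicit formulas. Then, with \(m=\lfloor s\rfloor+1\), the interpolation of Lemma \ref{lem-besov-interpolation} together with \(L^q\hookrightarrow\dot B^0_{q,\infty}\) and \(\dot W^{m,q}\hookrightarrow\dot B^m_{q,\infty}\) gives the \(\dot B^s_{q,1}\) bound. It suffices to prove
\begin{align*}
\|\bar\rho(t)\|_{\dot W^{m,q}}\lesssim \lambda^{m+\frac2p-\frac2q-\beta}(\log\log\lambda)^{\frac{2m-\beta}{2+|\beta|}},\qquad m\geq0,\ 0\le t\le t^*,
\end{align*}
since interpolating the cases \(0\) and \(m\) with weights \(1-\frac sm,\frac sm\) yields exactly the exponents \(s+\frac2p-\frac2q-\beta\) and \(\frac{2s-\beta}{2+|\beta|}\).

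The key quantity is the radial frequency produced by the shear. We have \(\Omega_\lambda(r)=\lambda^{\frac2p-\beta+1}(\log\log\lambda)^{-\frac{|\beta|}{2+|\beta|}}h(\lambda r)\), with \(h\) smooth. Hence on the annulus \(r\sim\lambda^{-1}\) containing \(\operatorname{supp} g(\lambda\cdot)\) we get \(|\partial_r^j\Omega_\lambda|\lesssim\lambda^{\frac2p-\beta+1+j}(\log\log\lambda)^{-\frac{|\beta|}{2+|\beta|}}\). Using the definition of \(t^*\), for \(0\le \tau\le t^*\) this gives
\[
\tau\,|\partial_r^j\Omega_\lambda|\lesssim \lambda^{j}(\log\log\lambda)^{1-\frac{|\beta|}{2+|\beta|}}=\lambda^j(\log\log\lambda)^{\frac{2}{2+|\beta|}}.
\]
So every radial derivative falling on a phase of the form \(\theta\pm\tau\Omega_\lambda(r)\) costs at most \(\lambda(\log\log\lambda)^{\frac{2}{2+|\beta|}}\). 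Derivatives falling on \(g(\lambda r)\), \(h(\lambda r)\), or the coefficients coming from converting \(\nabla\) to polar derivatives (\(\frac1r\partial_\theta\) with \(r\sim\lambda^{-1}\)) cost only \(\lambda\). Since \(\operatorname{supp} g\) avoids the origin, polar coordinates are smooth there and the Leibniz/Faà di Bruno expansion is legitimate. Each term in \(\partial^\alpha\bar\rho\), \(|\alpha|=m\), is therefore bounded pointwise by the amplitude times \(\lambda^m(\log\log\lambda)^{\frac{2m}{2+|\beta|}}\) on a set of measure \(\sim\lambda^{-2}\).

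For \(\beta>0\) the amplitude is \(\lambda^{\frac2p-\beta}(\log\log\lambda)^{-\frac{\beta}{2+\beta}}\), with phase \(\theta-t\Omega_\lambda\). The \(L^q\) factor \(\lambda^{-\frac2q}\) then gives the claimed bound directly. For \(\beta<0\) we first expand \(\bar\rho_0=\lambda^{\frac2p-\beta-k}(\log\log\lambda)^{-\frac{\beta+2k}{2-\beta}}\partial_r^k[\cdots]\) as a sum of terms \(a_j(r)\cos^{(j)}(\vartheta+t^*\Omega_\lambda(r))\). By the frequency count above (with \(\tau=t^*\)), the coefficients satisfy \(|\partial_r^i a_j|\lesssim\lambda^{k+i}(\log\log\lambda)^{\frac{2k}{2-\beta}}\), and \(\frac{2k}{2-\beta}=\frac{2k}{2+|\beta|}\). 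This makes the effective amplitude \(\lambda^{\frac2p-\beta}(\log\log\lambda)^{-\frac{\beta}{2-\beta}}=\lambda^{\frac2p-\beta}(\log\log\lambda)^{-\frac{\beta}{2+|\beta|}}\). Evaluating at \(\vartheta=\theta-t\Omega_\lambda(r)\), the phase becomes \(\theta+(t^*-t)\Omega_\lambda(r)\) with \(0\le t^*-t\le t^*\), so further derivatives again cost at most \(\lambda(\log\log\lambda)^{\frac{2}{2+|\beta|}}\). This gives the same \(\dot W^{m,q}\) bound, with exponent \(\frac{2m-\beta}{2+|\beta|}\).

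The main point to watch, rather than a genuine obstacle, is this bookkeeping in the \(\beta<0\) case. One must check that the \(k\) premixing derivatives and the \(m\) new derivatives each gain at most \(\lambda(\log\log\lambda)^{\frac{2}{2+|\beta|}}\), uniformly in \(t\in[0,t^*]\), so that the logarithmic weights combine to \(-\frac{\beta+2k}{2-\beta}+\frac{2k}{2-\beta}+\frac{2m}{2-\beta}=\frac{2m-\beta}{2+|\beta|}\). Lower-order terms, such as derivatives hitting \(g\), \(h\), or polar coefficients, lose a \(\log\log\lambda\) factor and are harmless.
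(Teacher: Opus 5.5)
Your proposal is correct and follows essentially the same route as the paper. You prove integer-order \(\dot W^{m,q}\) bounds by differentiating the explicit formula, with each derivative on the sheared phase costing \(\lambda(\log\log\lambda)^{\frac{2}{2+|\beta|}}\) because \(t\le t^*\); for \(\beta<0\) you use the Fa\`a di Bruno expansion of the premixed data evaluated at the unwinding phase \(\theta+(t^*-t)\Omega_\lambda(r)\), which matches the paper's leading/lower-order splitting and its logarithmic bookkeeping \(\frac{2m-\beta}{2+|\beta|}\). You then apply Besov interpolation between \(L^q\) and \(\dot W^{m,q}\) with \(m=\lfloor s\rfloor+1\), exactly as the paper does.
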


\begin{proof}
	It is enough to prove the corresponding integer order estimates and then use Besov
	interpolation. We distinguish the two signs of \(\beta\).
	
	\medskip
	\noindent{\bf Step 1. The case \(\beta>0\).}
	Using
	\[
	\nabla \bar\rho(t)
	=
	\partial_r\bar\rho(t)\mathbf e_r
	+
	\frac1r\partial_\theta\bar\rho(t)\mathbf e_\theta,
	\]
	and the fact that \(g(\lambda r)\) is supported where \(\lambda r\sim1\), we estimate
	the derivatives of \(\bar\rho\) directly. For every integer \(m\geq0\), differentiating
	the explicit formula for \(\bar\rho(t)\) gives
	\begin{align*}
		\|\bar\rho(t)\|_{\dot W^{m,q}}
		&\lesssim
		\lambda^m
		\lambda^{\frac{2}{p}-\frac{2}{q}-\beta}
		(\log\log\lambda)^{-\frac{\beta}{2+\beta}}\\
		&\quad+
		\left(
		t\lambda^{2+\frac{2}{p}-\beta}
		(\log\log\lambda)^{-\frac{\beta}{2+\beta}}
		\right)^m
		\lambda^{\frac{2}{p}-\frac{2}{q}-\beta}
		(\log\log\lambda)^{-\frac{\beta}{2+\beta}}.
	\end{align*}
	Since
	\[
	t\leq t^*
	=
	\lambda^{-\frac{2}{p}-1+\beta}\log\log\lambda,
	\]
	we have
	\[
	t\lambda^{2+\frac{2}{p}-\beta}
	(\log\log\lambda)^{-\frac{\beta}{2+\beta}}
	\lesssim
	\lambda(\log\log\lambda)^{\frac{2}{2+\beta}}.
	\]
	Thus, for every integer \(m\geq0\),
	\begin{align}\label{rho-integer-positive}
		\|\bar\rho(t)\|_{\dot W^{m,q}}
		\lesssim
		\lambda^{m+\frac{2}{p}-\frac{2}{q}-\beta}
		(\log\log\lambda)^{\frac{2m-\beta}{2+\beta}}.
	\end{align}
	
	Now let \(s>0\). Choose an integer \(m=\lfloor s\rfloor+1\), so that \(m>s\).
	By the embeddings
	\[
	L^q\hookrightarrow \dot B^0_{q,\infty},
	\qquad
	\dot W^{m,q}\hookrightarrow \dot B^m_{q,\infty},
	\]
	and Besov interpolation, we obtain
	\begin{align*}
		\|\bar\rho(t)\|_{\dot B^s_{q,1}}
		&\lesssim
		\|\bar\rho(t)\|_{L^q}^{1-\frac{s}{m}}
		\|\bar\rho(t)\|_{\dot W^{m,q}}^{\frac{s}{m}}\\
		&\lesssim
		\left(
		\lambda^{\frac{2}{p}-\frac{2}{q}-\beta}
		(\log\log\lambda)^{-\frac{\beta}{2+\beta}}
		\right)^{1-\frac{s}{m}}
		\left(
		\lambda^{m+\frac{2}{p}-\frac{2}{q}-\beta}
		(\log\log\lambda)^{\frac{2m-\beta}{2+\beta}}
		\right)^{\frac{s}{m}}\\
		&\lesssim
		\lambda^{s+\frac{2}{p}-\frac{2}{q}-\beta}
		(\log\log\lambda)^{\frac{2s-\beta}{2+\beta}}.
	\end{align*}
	This proves \eqref{estimate-bar-rho} when \(\beta>0\).
	
	\medskip
	\noindent{\bf Step 2. The case \(\beta<0\).}
	We now consider the premixed density. Expanding the \(k\)-th radial derivative, we
	have
	\begin{align*}
		&\partial_r^k
		\left[
		g(\lambda r)
		\cos\left(
		\theta
		+
		t^*\lambda^{\frac{2}{p}-\beta}
		(\log\log\lambda)^{\frac{\beta}{2-\beta}}
		\frac{u_\theta[f](\lambda r)}{r}
		\right)
		\right]\\
		&=
		\sum_{0\leq i\leq k}
		\lambda^{k-i}g^{(k-i)}(\lambda r)
		\partial_r^i
		\cos\left(
		\theta
		+
		t^*\lambda^{\frac{2}{p}-\beta+1}
		(\log\log\lambda)^{\frac{\beta}{2-\beta}}
		\frac{u_\theta[f](\lambda r)}{\lambda r}
		\right).
	\end{align*}
	Set
	\[
	\Theta
	=
	\theta
	+
	t^*\lambda^{\frac{2}{p}-\beta+1}
	(\log\log\lambda)^{\frac{\beta}{2-\beta}}
	\frac{u_\theta[f](\lambda r)}{\lambda r},
	\qquad
	h(r)=\frac{u_\theta[f](r)}{r}.
	\]
	By the Faà di Bruno formula,
	\begin{align*}
		&\partial_r^i
		\cos\left(
		\theta
		+
		t^*\lambda^{\frac{2}{p}-\beta+1}
		(\log\log\lambda)^{\frac{\beta}{2-\beta}}
		\frac{u_\theta[f](\lambda r)}{\lambda r}
		\right)\\
		&=
		\sum_{m_1+2m_2+\cdots+im_i=i}
		C_{m_1,\cdots,m_i}
		\cos^{(m_1+\cdots+m_i)}(\Theta)\\
		&\quad\times
		\prod_{1\leq j\leq i}
		\left[
		\partial_r^j
		\left(
		t^*\lambda^{\frac{2}{p}-\beta+1}
		(\log\log\lambda)^{\frac{\beta}{2-\beta}}
		h(\lambda r)
		\right)
		\right]^{m_j}.
	\end{align*}
	Since \(t^*=\lambda^{-\frac{2}{p}-1+\beta}\log\log\lambda\), on the support of
	\(g(\lambda r)\),
	\[
		\partial_r^j
		\left(
		t^*\lambda^{\frac{2}{p}-\beta+1}
		(\log\log\lambda)^{\frac{\beta}{2-\beta}}
		h(\lambda r)
		\right)
		=
		\lambda^j(\log\log\lambda)^{\frac{2}{2-\beta}}
		h^{(j)}(\lambda r).
	\]
	Substituting this expansion into \(\bar\rho(t)\) and using the transport structure,
	we write
	\begin{align}\nonumber
		\bar\rho(t)
		&=
		\lambda^{\frac{2}{p}-\beta-k}
		(\log\log\lambda)^{-\frac{\beta+2k}{2-\beta}}
		\sum_{0\leq i\leq k}
		\lambda^{k-i}g^{(k-i)}(\lambda r)
		\sum_{m_1+2m_2+\cdots+im_i=i}
		C_{m_1,\cdots,m_i} \\\nonumber
		&\quad\times
		\cos^{(m_1+\cdots+m_i)}
		\left(
		\theta
		+
		(t^*-t)\lambda^{\frac{2}{p}-\beta+1}
		(\log\log\lambda)^{\frac{\beta}{2-\beta}}
		h(\lambda r)
		\right) \\\nonumber
		&\quad\times
		\prod_{1\leq j\leq i}
		\left(\lambda^j(\log\log\lambda)^{\frac{2}{2-\beta}}\right)^{m_j}
		\left(h^{(j)}(\lambda r)\right)^{m_j} \\\label{app-rho-<0-t}
		&=
		\bar\rho_{\rm lead}(t)+\bar\rho_{\rm low}(t).
	\end{align}
	The leading term corresponds to \(i=k\), \(m_1=k\), and \(m_j=0\) for \(j\geq2\):
	\begin{align*}
		\bar\rho_{\rm lead}(t)
		&=
		\lambda^{\frac{2}{p}-\beta}
		(\log\log\lambda)^{-\frac{\beta}{2-\beta}}
		g(\lambda r)
		\left(h'(\lambda r)\right)^k\\
		&\quad\times
		\cos^{(k)}
		\left(
		\theta
		+
		(t^*-t)\lambda^{\frac{2}{p}-\beta+1}
		(\log\log\lambda)^{\frac{\beta}{2-\beta}}
		h(\lambda r)
		\right).
	\end{align*}
		All remaining terms contain at least one fewer factor
		$(\log\log\lambda)^{\frac{2}{2-\beta}}$, and therefore
		\[
		|\bar\rho_{\rm low}(t)|
		\lesssim
		\lambda^{\frac{2}{p}-\beta}
		(\log\log\lambda)^{-\frac{\beta+2}{2-\beta}}.
	\]
	Arguing as in Step 1, using again that the support lies where \(\lambda r\sim1\), we
	obtain, for every integer \(m\geq0\),
	\begin{align}\label{rho-integer-negative}
		\|\bar\rho(t)\|_{\dot W^{m,q}}
		\lesssim
		\lambda^{m+\frac{2}{p}-\frac{2}{q}-\beta}
		(\log\log\lambda)^{\frac{2m-\beta}{2-\beta}}.
	\end{align}
	Indeed, the leading term satisfies the right-hand side of
		\eqref{rho-integer-negative}, while the lower-order part is smaller by a factor
		$(\log\log\lambda)^{-\frac{2}{2-\beta}}$.
	
	Finally, let \(s>0\) and choose \(m=\lfloor s\rfloor+1\). Interpolating between the
	\(L^q\) estimate and the \(\dot W^{m,q}\) estimate as in Step 1 yields
	\[
	\|\bar\rho(t)\|_{\dot B^s_{q,1}}
	\lesssim
	\lambda^{s+\frac{2}{p}-\frac{2}{q}-\beta}
	(\log\log\lambda)^{\frac{2s-\beta}{2-\beta}}.
	\]
	Since \(2-\beta=2+|\beta|\) when \(\beta<0\), this is exactly
	\eqref{estimate-bar-rho}. The proof is complete.
\end{proof}

\subsubsection{Norm Inflation of the Approximate Solution}\label{sec:3.1.2}

We now establish norm inflation for the approximate solution in the inviscid case. We begin with the velocity component, which remains bounded in the inflation norm and therefore does not contribute to the logarithmic growth.

\begin{lemma}\label{lem-bar-u-beta-estimate}
	Let \(\beta\) and \(p\) satisfy the assumptions of Theorem \ref{thm}. Then the
	approximate velocity \(\bar u=u[\bar w]\) satisfies
	\begin{align}\label{bar-u-beta-estimate}
		\|\bar u(t)\|_{\dot B^\beta_{p,1}}
		=
		\|\bar u(0)\|_{\dot B^\beta_{p,1}}
		\lesssim (\log\log \lambda)^{-\frac{|\beta|}{2+|\beta|}},
	\end{align}
	for \(0\leq t\leq t^*\).
\end{lemma}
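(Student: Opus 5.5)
The equality \(\|\bar u(t)\|_{\dot B^\beta_{p,1}}=\|\bar u(0)\|_{\dot B^\beta_{p,1}}\) is immediate: \(\bar w(t)=w(0)\) is independent of time by \eqref{app-w-expression}, so \(\bar u(t)=u[\bar w(t)]=u[\bar w(0)]\) for all \(t\). For the bound, we first use the Biot--Savart estimate of Lemma \ref{lem-biot-savart-besov} to reduce to the vorticity:
\[
\|\bar u(0)\|_{\dot B^\beta_{p,1}}\lesssim \|\bar w(0)\|_{\dot B^{\beta-1}_{p,1}} .
\]
If \(\beta-1>-1\), that is \(\beta>0\), we apply Lemma \ref{app-w-bound-s geq 0} with \(s=\beta-1\) and \(q=p\). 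The exponent of \(\lambda\) is then \(s+1+\frac2p-\frac2p-\beta=0\), and we obtain exactly \((\log\log\lambda)^{-\frac{|\beta|}{2+|\beta|}}\). Equivalently, for \(\beta>0\) one can apply Corollary \ref{app-u-bound-s geq 0} directly with \(s=\beta\) and \(q=p\).

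The case \(\beta<0\) is the only real obstacle, since then \(s=\beta-1<-1\) lies outside the range of Lemma \ref{app-w-bound-s geq 0}. We handle it by exploiting the extra vanishing moments of \(f\), namely \(f^{(-2)}\in C_c^\infty((0,\infty))\). The cleanest route is to compute \(\bar u\) explicitly. For radial vorticity,
\[
\bar u_\theta(r)=\frac1r\int_0^r \bar w(\rho)\rho\,d\rho .
\]
Since \(\int_0^\infty f=\int_0^\infty rf=0\), the function \(F(r):=\frac1r\int_0^r f(\rho)\rho\,d\rho\) is compactly supported in \((0,\infty)\). Hence
\[
\bar u(x)=\lambda^{\frac2p-\beta}(\log\log\lambda)^{-\frac{|\beta|}{2+|\beta|}}\,F(\lambda r)\,\mathbf e_\theta
\]
is a smooth field compactly supported in the annulus \(\lambda r\sim1\). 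Moreover, \(F(\lambda r)\mathbf e_\theta=\nabla^\perp\bigl(G(\lambda r)\bigr)/\lambda\) with \(G'=F\). Because \(\int_0^\infty F=0\), which follows from \(f^{(-2)}\) being compactly supported after integrating by parts, \(G\) is itself compactly supported. So \(\bar u\) is a derivative of a compactly supported smooth profile at scale \(\lambda^{-1}\), and we can iterate this to write it as a high-order derivative of compactly supported profiles as far as the moment conditions allow. Concretely, \(\bar u=\lambda^{\frac2p-\beta}(\log\log\lambda)^{-\frac{|\beta|}{2+|\beta|}}U(\lambda x)\) with \(U\in C_c^\infty(\mathbb R^2\setminus\{0\})\) fixed. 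Scaling then gives
\[
\|\bar u\|_{\dot B^\beta_{p,1}}=\lambda^{\frac2p-\beta}(\log\log\lambda)^{-\frac{|\beta|}{2+|\beta|}}\,\lambda^{\beta-\frac2p}\|U\|_{\dot B^\beta_{p,1}},
\]
which is the claim, provided \(\|U\|_{\dot B^\beta_{p,1}}<\infty\).

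It remains to check the finiteness of \(\|U\|_{\dot B^\beta_{p,1}}\) for \(\beta<0\) with \(\beta-\frac2p>-2\). At high frequencies this is trivial because \(U\) is smooth. At low frequencies, \(U=\nabla^\perp G\) with \(G\in C_c^\infty\), so \(|\widehat U(\xi)|\lesssim|\xi|\). Then \(\|\dot\Delta_jU\|_{L^p}\lesssim 2^{j}2^{2j(1-\frac1p)}\), which is summable against \(2^{j\beta}\) as \(j\to-\infty\) whenever \(\beta+3-\frac2p>0\); this is guaranteed by \(\beta-\frac2p>-2\). (Alternatively, one can argue by duality as in Step 2 of Lemma \ref{app-w-bound-s geq 0}.) Thus \(\|U\|_{\dot B^\beta_{p,1}}\) is a finite constant depending only on \(f,\beta,p\), and the lemma follows.
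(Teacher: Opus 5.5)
Your route is genuinely different from the paper's and it works, but one intermediate claim is false and needs a small repair.

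\textbf{The false claim.} The moment conditions do not give $\int_0^\infty F\,dr=0$. Since $\int_0^r\rho f(\rho)\,d\rho=rf^{(-1)}(r)-f^{(-2)}(r)$, one has $F=f^{(-1)}-f^{(-2)}/r$, hence
\[
\int_0^\infty F(r)\,dr=-\int_0^\infty\frac{f^{(-2)}(r)}{r}\,dr=-\int_0^\infty r\log r\,f(r)\,dr .
\]
This is a third moment, and it is never imposed. For the $f$ of Lemma \ref{lem-choice-fg-inviscid} (nonnegative bumps, $a_1<0<a_2$) one checks that $f^{(-2)}\geq0$ and $f^{(-2)}\not\equiv0$. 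So this integral is strictly negative: the flux of $U$ across a ray is nonzero, and $G$ tends to a nonzero constant $G_\infty$ rather than being compactly supported in $(0,\infty)$.

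\textbf{The repair.} Set $\tilde G(x):=G(|x|)-G_\infty$. It is constant near the origin and zero for large $|x|$, so $\tilde G\in C_c^\infty(\mathbb R^2)$ and $U=\nabla^\perp\tilde G$. Your bounds $\|\dot\Delta_jU\|_{L^1}\lesssim 2^j\|\tilde G\|_{L^1}$ and $\|\dot\Delta_jU\|_{L^\infty}\lesssim 2^{3j}$ then go through unchanged. Equivalently, $U$ is compactly supported and divergence free, so $\int U=0$.

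Two smaller points. Drop the remark about iterating to higher-order derivatives: second-order vanishing of $\widehat U$ at $0$ would require $\int\rho^3 f\,d\rho=0$, which is not arranged and not needed. Also, for non-dyadic $\lambda$ the scaling identity holds with $\sim$, not $=$.

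\textbf{Comparison with the paper.} The paper does not split by sign. It takes $s=\beta-\frac2p+2-\delta>0$ and $q=\frac{2}{2-\delta}\leq p$, so that $s-\frac2q=\beta-\frac2p$ and $\dot B^s_{q,1}\hookrightarrow\dot B^\beta_{p,1}$. It then quotes Corollary \ref{app-u-bound-s geq 0}. Negative regularity is traded for integrability close to $L^1$. The cancellation sits in the duality step of Lemma \ref{app-w-bound-s geq 0}, which uses $f^{(-2)}\in C_c^\infty((0,\infty))$. The advantages are that it reuses estimates the stability argument needs anyway, and that this is exactly where the hypothesis $\beta-\frac2p>-2$ enters.

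Your argument is more transparent. The bound becomes pure scaling of one fixed profile, and it uses only $\int_0^\infty rf\,dr=0$ (zero total vorticity) plus incompressibility. For this lemma alone it needs only $\beta-\frac2p>-3$. It also handles $\beta>0$ by the same computation, since for $U\in C_c^\infty$ the low frequencies are trivially summable when $\beta>0$. So your case split is unnecessary.
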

\begin{proof}
		Since \(\bar w\) is stationary, so is \(\bar u=u[\bar w]\). By the assumptions
		\(\beta-\frac{2}{p}>-2\) and $p>1$, we may choose \(0<\delta<1\) so small that
		\[
		s\define \beta-\frac{2}{p}+2-\delta>0,
		\qquad
		q\define \frac{2}{2-\delta}\leq p.
	\]
	Then \(s-\frac{2}{q}=\beta-\frac{2}{p}\), and hence the Besov embedding gives
	\[
	\dot B^s_{q,1}
	\hookrightarrow
	\dot B^\beta_{p,1}.
	\]
	Using Corollary \ref{app-u-bound-s geq 0}, we obtain
	\[
	\|\bar u(t)\|_{\dot B^\beta_{p,1}}
	\lesssim
	\|\bar u(t)\|_{\dot B^s_{q,1}}
	\lesssim
	\lambda^{s+\frac{2}{p}-\frac{2}{q}-\beta}
	(\log\log \lambda)^{-\frac{|\beta|}{2+|\beta|}}
	=
	(\log\log \lambda)^{-\frac{|\beta|}{2+|\beta|}}.
	\]
	Together with the stationarity of \(\bar u\), this proves
	\eqref{bar-u-beta-estimate}.
\end{proof}

We next turn to the density component. Although the initial density is small in
\(\dot B^\beta_{p,1}\), the stationary angular transport produces the desired
growth at \(t=t^*\): it creates rapid radial oscillations when \(\beta>0\), whereas
it unwinds the premixed radial oscillation when \(\beta<0\). These two mechanisms
yield the lower bound in \(\dot B^\beta_{p,\infty}\) needed for norm inflation.
Accordingly, the proof is divided according to the sign of \(\beta\).
\begin{lemma}\label{lem-rho-lower-upper-bound}
	Let \(\beta\) and \(p\) satisfy the assumptions of Theorem \ref{thm}. Then
	\[
	\rho(0)=\bar\rho(0),
	\]
	and the common initial density satisfies
	\begin{align}\label{rho-initial-small-besov}
		\|\rho(0)\|_{\dot B^\beta_{p,1}}
		=
		\|\bar\rho(0)\|_{\dot B^\beta_{p,1}}
		\lesssim
		(\log\log \lambda)^{-\frac{|\beta|}{2+|\beta|}}.
	\end{align}
	Moreover, at \(t=t^*\),
	\begin{align}\label{rho-final-large-besov}
		\|\bar\rho(t^*)\|_{\dot B^\beta_{p,\infty}}
		\gtrsim
		(\log\log \lambda)^{\frac{|\beta|}{2+|\beta|}}.
	\end{align}
\end{lemma}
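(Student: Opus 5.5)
The identity $\rho(0)=\bar\rho(0)$ holds by construction: comparing \eqref{initial-rho-exact=app} and \eqref{initial-rho-negative-inviscid} with \eqref{app-rho-expression} and \eqref{app-rho-expre-<0} at $t=0$, the phases agree because $t^*\Omega_\lambda(r)$ is exactly the phase appearing in \eqref{initial-rho-negative-inviscid}. It remains to prove the upper bound \eqref{rho-initial-small-besov} and the lower bound \eqref{rho-final-large-besov}, and I would treat the two signs of $\beta$ separately.

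\textbf{Case $\beta>0$.} For the initial smallness I would reuse the embedding trick of Lemma \ref{lem-bar-u-beta-estimate}. Choose $\delta$ small and set $s=\beta-\frac2p+2-\delta>0$, $q=\frac{2}{2-\delta}\le p$, so that $\dot B^s_{q,1}\hookrightarrow\dot B^\beta_{p,1}$. At $t=0$ there is no shear, so the integer bounds read $\|\bar\rho(0)\|_{\dot W^{m,q}}\lesssim\lambda^{m+\frac2p-\frac2q-\beta}(\log\log\lambda)^{-\frac{\beta}{2+\beta}}$. Interpolating these gives $\|\bar\rho(0)\|_{\dot B^s_{q,1}}\lesssim(\log\log\lambda)^{-\frac{\beta}{2+\beta}}$. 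For the lower bound at $t^*$, the phase $\theta-t^*\Omega_\lambda(r)$ has radial derivative $-t^*\lambda^{2+\frac2p-\beta}(\log\log\lambda)^{-\frac{\beta}{2+\beta}}h'(\lambda r)$, which has size $\lambda N$ with $N:=(\log\log\lambda)^{\frac{2}{2+\beta}}$, using $|h'|\ge c_0$ on $\operatorname{supp} g$ from Lemma \ref{lem-choice-fg-inviscid}. This term dominates $\partial_r\bar\rho(t^*)$, so
\[
\|\nabla\bar\rho(t^*)\|_{L^p}\gtrsim \lambda^{1-\beta}(\log\log\lambda)^{\frac{2-\beta}{2+\beta}},\qquad \|\bar\rho(t^*)\|_{L^p}\sim\lambda^{-\beta}(\log\log\lambda)^{-\frac{\beta}{2+\beta}}.
\]
To pass from these to $\dot B^\beta_{p,\infty}$, I would interpolate $\dot B^1_{p,1}$ between $\dot B^\beta_{p,\infty}$ and $\dot B^{m}_{p,\infty}$ with $m>\max(1,\beta)$ an integer, using Lemma \ref{lem-besov-interpolation}. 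Lemma \ref{app-rho-bound-s geq 0} bounds the $\dot B^m$ factor by $\lambda^{m-\beta}(\log\log\lambda)^{\frac{2m-\beta}{2+\beta}}$. Solving the interpolation for the $\dot B^\beta_{p,\infty}$ norm, the $\lambda$ powers cancel and the log powers leave $(\log\log\lambda)^{\frac{\beta}{2+\beta}}$. This works if $\beta<1$; if $\beta\ge1$ I would instead use an integer $n>\beta$ and the analogous lower bound $\|\partial_r^n\bar\rho\|_{L^p}\gtrsim(\lambda N)^n\lambda^{-\beta}(\log\log\lambda)^{-\frac{\beta}{2+\beta}}$, interpolating $\dot B^n$ between $\dot B^\beta$ and $\dot B^{n+1}$. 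The $L^p$ lower bound for a high derivative holds because the top-order term $g(\lambda r)(\text{phase}')^n\cos^{(n)}$ dominates the remaining terms by a factor $N$.

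\textbf{Case $\beta<0$.} At $t=t^*$ the phase is unwound and $\bar\rho(t^*)=\lambda^{\frac2p-\beta-k}(\log\log\lambda)^{-\frac{\beta+2k}{2-\beta}}\partial_r^k[g(\lambda r)]\cos\theta$, so $\|\bar\rho(t^*)\|_{L^p}\sim\lambda^{-\beta}(\log\log\lambda)^{-\frac{\beta+2k}{2-\beta}}$. I would combine this with the upper bound $\|\bar\rho(t^*)\|_{\dot B^1_{p,\infty}}\lesssim\lambda^{1-\beta}(\log\log\lambda)^{\frac{2-\beta-2k}{2-\beta}}$, which follows from differentiating the explicit form (again no shear at $t^*$). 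Interpolating $\dot B^0_{p,1}\supset L^p$-type control between $\dot B^\beta_{p,\infty}$ and $\dot B^1_{p,\infty}$ yields $\|\bar\rho(t^*)\|_{\dot B^\beta}\gtrsim(\log\log\lambda)^{-\frac{\beta}{2-\beta}}=(\log\log\lambda)^{\frac{|\beta|}{2+|\beta|}}$. For $p=\infty$ one needs $L^\infty\lesssim\dot B^0_{\infty,1}$, which is fine.

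The initial smallness for $\beta<0$ is where I expect the main obstacle, and I would argue by duality via Proposition \ref{prop-besov-duality}. Pair $\bar\rho(0)$ with $\phi\in\mathcal S_0$ and integrate the $k$ radial derivatives by parts onto $r\phi$. Then expand $\bar\rho(0)=\bar\rho_{\rm lead}(0)+\bar\rho_{\rm low}(0)$ as in \eqref{app-rho-<0-t}. The leading term carries the oscillation $\cos^{(k)}(\theta+t^*\Omega_\lambda)$ at radial frequency $\lambda N$. Each integration by parts against this oscillation gains a factor $(\lambda N)^{-1}$ and costs a derivative of $\phi$, which can be balanced against $\|\phi\|_{\dot B^{-\beta}_{p',\infty}}$ by splitting $\phi$ into frequencies below and above $\lambda N$. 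The choice $k=-\lfloor\beta\rfloor+2>|\beta|+1$ ensures enough decay. Carrying the exponents through should give exactly $(\log\log\lambda)^{\frac{\beta}{2-\beta}}$. The delicate points are the $r$-weight near $r\sim\lambda^{-1}$, the boundary terms in the integrations by parts, and the endpoints $p=\infty$ and $p'=1$, which I would handle by working with dyadic blocks $\dot\Delta_j\phi$ directly.
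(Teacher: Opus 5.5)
Your argument for $\rho(0)=\bar\rho(0)$ and your treatment of $\beta>0$ are correct and essentially the paper's. Your embedding $\dot B^s_{q,1}\hookrightarrow\dot B^\beta_{p,1}$ is a harmless detour, since interpolating directly between $L^p$ and $\dot W^{\ell,p}$, $\ell=\lfloor\beta\rfloor+1$, already gives the bound. The genuine error is in the lower bound for $\beta<0$. With $L=\log\log\lambda$, your formula $\bar\rho(t^*)=\lambda^{\frac2p-\beta-k}L^{-\frac{\beta+2k}{2-\beta}}\partial_r^k[g(\lambda r)]\cos\theta$ is false. In $\bar\rho_0$ the $k$ radial derivatives are taken with $\vartheta$ fixed, so they fall on the phase $t^*\Omega_\lambda(r)=Nh(\lambda r)$, $N=L^{\frac{2}{2-\beta}}$, and produce the factors $\prod_j(\lambda^jNh^{(j)}(\lambda r))^{m_j}$ in \eqref{app-rho-<0-t}. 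Composing with $\theta\mapsto\theta-t^*\Omega_\lambda(r)$ only turns each $\cos^{(m)}(\vartheta+t^*\Omega_\lambda)$ into $\cos^{(m)}\theta$; it does not remove these factors. The leading term at $t^*$ is therefore $\lambda^{\frac2p-\beta}L^{-\frac{\beta}{2-\beta}}g(\lambda r)(h'(\lambda r))^k\cos^{(k)}\theta$, not a $k$-th derivative of $g$.

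Your own $t=0$ analysis already exposes the inconsistency. The shear is area preserving, so $\|\bar\rho(t^*)\|_{L^p}=\|\bar\rho(0)\|_{L^p}\sim\lambda^{-\beta}L^{-\frac{\beta}{2-\beta}}$, not $\lambda^{-\beta}L^{-\frac{\beta+2k}{2-\beta}}$. Moreover, your formula would give no inflation at all. It is a fixed profile at the critical scaling, so its $\dot B^\beta_{p,\infty}$ norm is $\sim L^{-\frac{\beta+2k}{2-\beta}}\to0$. Consistently, inserting your stated $L^p$ and $\dot B^1_{p,\infty}$ bounds into the interpolation inequality yields only $L^{\frac{\beta-2k}{2-\beta}}$, a negative power, not the claimed $L^{-\frac{\beta}{2-\beta}}$.

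The missing idea is that for $\beta<0$ the growth comes precisely from the $k$ derivatives that hit the phase at $t=0$. They cost $(\lambda N)^k$ in amplitude but are invisible in $\dot B^\beta_{p,1}$ because of the derivative structure, and they survive the unmixing. To repair the step, evaluate \eqref{app-rho-<0-t} at $t=t^*$. Every term is a $\lambda$-rescaled fixed radial profile times $\cos^{(m)}\theta$; the leading one carries $L^{-\frac{\beta}{2-\beta}}$, and the others are smaller by $N^{-1}$. The nondegeneracy $|h'|\ge c_0$ on $\operatorname{supp}g$, which your $\beta<0$ argument never uses, then gives $\|\bar\rho(t^*)\|_{L^p}\gtrsim\lambda^{-\beta}L^{-\frac{\beta}{2-\beta}}$. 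The absence of radial oscillation gives $\|\bar\rho(t^*)\|_{\dot B^1_{p,\infty}}\lesssim\lambda^{1-\beta}L^{-\frac{\beta}{2-\beta}}$, with the same power of $L$. Interpolation then yields $\|\bar\rho(t^*)\|_{\dot B^\beta_{p,\infty}}\gtrsim L^{-\frac{\beta}{2-\beta}}$.

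By contrast, your plan for the initial smallness when $\beta<0$, which you flagged as the main obstacle, is sound. Split $\phi$ at frequency $\lambda N$. The high part costs $\|\bar\rho(0)\|_{L^p}(\lambda N)^{\beta}$. The low part, after moving all but one radial derivative onto $r\phi$, costs $\lambda^{1-\beta-k}L^{\frac{2-\beta-2k}{2-\beta}}(\lambda N)^{k-1+\beta}$. Both equal $L^{\frac{\beta}{2-\beta}}$ times $\|\phi\|_{\dot B^{-\beta}_{p',\infty}}$. This is an unpacked version of the paper's route, which bounds $\|\rho(0)\|_{\dot B^{1-k}_{p,\infty}}$ by duality against $\dot B^{k-1}_{p',1}$ and then interpolates with $L^p$.
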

\begin{proof}
	We split the proof according to the sign of \(\beta\).
	
	\medskip
	\noindent{\bf Step 1. The case \(\beta>0\).}
	Let
	\[
	\ell=\lfloor\beta\rfloor+1.
	\]
	Then \(\ell>\beta\). By the definition \eqref{initial-rho-exact=app} and interpolation,
	we have
	\begin{align*}
		\|\rho(0)\|_{\dot B^\beta_{p,1}}
		&\lesssim
		\|\rho(0)\|_{L^p}^{1-\frac{\beta}{\ell}}
		\|\rho(0)\|_{\dot W^{\ell,p}}^{\frac{\beta}{\ell}}\\
		&\lesssim
		\left(
		\lambda^{-\beta}
		(\log\log\lambda)^{-\frac{\beta}{2+\beta}}
		\right)^{1-\frac{\beta}{\ell}}
		\left(
		\lambda^{\ell-\beta}
		(\log\log\lambda)^{-\frac{\beta}{2+\beta}}
		\right)^{\frac{\beta}{\ell}}\\
		&\lesssim
		(\log\log\lambda)^{-\frac{\beta}{2+\beta}}.
	\end{align*}
	This gives the desired smallness of the initial density.
	
	We now evaluate the approximate density at \(t=t^*\). From
	\eqref{app-rho-expression},
	\begin{align*}
		\bar\rho(t^*)
		&=
		\lambda^{\frac{2}{p}-\beta}
		(\log\log\lambda)^{-\frac{\beta}{2+\beta}}
		g(\lambda r)
		\cos\left(
		\theta
		-
		(\log\log\lambda)^{\frac{2}{2+\beta}}
		\frac{u_\theta[f](\lambda r)}{\lambda r}
		\right).
	\end{align*}
	Since
	\[
	\frac{u_\theta[f](\lambda r)}{\lambda r}=h(\lambda r),
	\]
	and \(|h'|\geq c_0\) on \(\operatorname{supp}g\), the leading contribution to
	\(\nabla^\ell\bar\rho(t^*)\) comes from differentiating the oscillatory phase
	\(\ell\) times. More precisely, the leading term has size
	\[
	\lambda^{\frac{2}{p}-\beta}
	(\log\log\lambda)^{-\frac{\beta}{2+\beta}}
	\left(
	\lambda(\log\log\lambda)^{\frac{2}{2+\beta}}
	\right)^\ell,
	\]
		whereas all lower-order terms contain at most \(\ell-1\) derivatives falling on the
	phase and are smaller by a factor of
	\((\log\log\lambda)^{-\frac{2}{2+\beta}}\). Hence
	\begin{align}\label{rho-Well-lower-beta-positive}
		\|\bar\rho(t^*)\|_{\dot W^{\ell,p}}
		\gtrsim
		\lambda^{\ell-\beta}
		(\log\log\lambda)^{\frac{2\ell-\beta}{2+\beta}}.
	\end{align}
	On the other hand, Lemma \ref{app-rho-bound-s geq 0} gives
	\begin{align}\label{rho-Wellplus1-upper-beta-positive}
		\|\bar\rho(t^*)\|_{\dot W^{\ell+1,p}}
		\lesssim
		\lambda^{\ell+1-\beta}
		(\log\log\lambda)^{\frac{2(\ell+1)-\beta}{2+\beta}}.
	\end{align}
	Interpolating between \(\dot B^\beta_{p,\infty}\) and
	\(\dot W^{\ell+1,p}\), we obtain
	\[
	\|\bar\rho(t^*)\|_{\dot W^{\ell,p}}
	\lesssim
	\|\bar\rho(t^*)\|_{\dot B^\beta_{p,\infty}}^{\frac{1}{\ell+1-\beta}}
	\|\bar\rho(t^*)\|_{\dot W^{\ell+1,p}}^{\frac{\ell-\beta}{\ell+1-\beta}}.
	\]
	Combining this with \eqref{rho-Well-lower-beta-positive} and
	\eqref{rho-Wellplus1-upper-beta-positive}, we get
	\[
	\|\bar\rho(t^*)\|_{\dot B^\beta_{p,\infty}}
	\gtrsim
	(\log\log\lambda)^{\frac{\beta}{2+\beta}}.
	\]
	This proves the lemma for \(\beta>0\).
	
	\medskip
	\noindent{\bf Step 2. The case \(\beta<0\).}
	We recall from \eqref{initial-rho-negative-inviscid} that
	\begin{align}\label{initial-rho-<0}
		\bar\rho(0)=\rho(0)
		=
		\lambda^{\frac{2}{p}-\beta-k}
		(\log\log\lambda)^{-\frac{\beta+2k}{2-\beta}}
		\partial_r^k
		\left[
		g(\lambda r)
		\cos \left(
		\theta+
		t^*\lambda^{\frac{2}{p}-\beta}
		(\log\log\lambda)^{\frac{\beta}{2-\beta}}
		\frac{u_\theta[f](\lambda r)}{r}
		\right)
		\right].
	\end{align}
	
We estimate the initial norm by duality. Let
	\(\phi\in\mathcal S_0(\mathbb R^2)\). Then
\begin{align*}
	&\int_{\mathbb R^2}\rho(0)\phi\,dx\\
	&=
	\lambda^{\frac{2}{p}-\beta-k}
	(\log\log\lambda)^{-\frac{\beta+2k}{2-\beta}}\\
	&\quad\times
	\int_0^\infty\int_{-\pi}^{\pi}
	\partial_r^k
	\left[
	g(\lambda r)
	\cos\left(
	\theta+
	t^*\lambda^{\frac{2}{p}-\beta}
	(\log\log\lambda)^{\frac{\beta}{2-\beta}}
	\frac{u_\theta[f](\lambda r)}{r}
	\right)
	\right]\phi\,r\,dr\,d\theta.
\end{align*}
Integrating by parts \(k-1\) times gives
\begin{align}\nonumber
	&\int_0^\infty\int_{-\pi}^{\pi}
	\partial_r^k
	\left[
	g(\lambda r)
	\cos\left(
	\theta+
	t^*\lambda^{\frac{2}{p}-\beta}
	(\log\log\lambda)^{\frac{\beta}{2-\beta}}
	\frac{u_\theta[f](\lambda r)}{r}
	\right)
	\right]\phi\,r\,dr\,d\theta\\
	&=
	(-1)^{k-1}
	\int_0^\infty\int_{-\pi}^{\pi}
	\partial_r
	\left[
	g(\lambda r)
	\cos\left(
	\theta+
	t^*\lambda^{\frac{2}{p}-\beta}
	(\log\log\lambda)^{\frac{\beta}{2-\beta}}
	\frac{u_\theta[f](\lambda r)}{r}
	\right)
	\right]\nonumber\\
	&\qquad\qquad\qquad
	\times
	\left((k-1)\partial_r^{k-2}\phi+r\partial_r^{k-1}\phi\right)
	\,dr\,d\theta \nonumber\\
	&\define I_1+I_2. \label{I1+I2}
\end{align}
For \(I_1\), another integration by parts gives
\begin{align}\nonumber
	I_1
	&=
	(-1)^k
	(k-1)
	\int_0^\infty\int_{-\pi}^{\pi}
	g(\lambda r)
	\cos\left(
	\theta+
	t^*\lambda^{\frac{2}{p}-\beta}
	(\log\log\lambda)^{\frac{\beta}{2-\beta}}
	\frac{u_\theta[f](\lambda r)}{r}
	\right)
	\partial_r^{k-1}\phi\,dr\,d\theta \nonumber\\
	&=
	(-1)^k
	(k-1)
	\int_0^\infty\int_{-\pi}^{\pi}
	\frac1r
	g(\lambda r)
	\cos\left(
	\theta+
	t^*\lambda^{\frac{2}{p}-\beta}
	(\log\log\lambda)^{\frac{\beta}{2-\beta}}
	\frac{u_\theta[f](\lambda r)}{r}
	\right)
	\partial_r^{k-1}\phi\,r\,dr\,d\theta \nonumber\\
	&\lesssim
	\lambda^{1-\frac{2}{p}}
	\|\phi\|_{\dot B^{k-1}_{p',1}}.
	\label{I1}
\end{align}
For \(I_2\), using
\[
t^*\lambda^{\frac{2}{p}-\beta}
(\log\log\lambda)^{\frac{\beta}{2-\beta}}
\frac{u_\theta[f](\lambda r)}{r}
=
(\log\log\lambda)^{\frac{2}{2-\beta}}
\frac{u_\theta[f](\lambda r)}{\lambda r},
\]
we obtain
\begin{align}\nonumber
	I_2
	&=
	(-1)^{k-1}
	\int_0^\infty\int_{-\pi}^{\pi}
	\partial_r
	\left[
	g(\lambda r)
	\cos\left(
	\theta+
	(\log\log\lambda)^{\frac{2}{2-\beta}}
	\frac{u_\theta[f](\lambda r)}{\lambda r}
	\right)
	\right]
	\partial_r^{k-1}\phi\,r\,dr\,d\theta \nonumber\\
	&\lesssim
	\lambda^{1-\frac{2}{p}}
	(\log\log\lambda)^{\frac{2}{2-\beta}}
	\|\phi\|_{\dot B^{k-1}_{p',1}}.
	\label{I2}
\end{align}	

	Combining \eqref{I1+I2}, \eqref{I1}, and \eqref{I2}, we find
\begin{align*}
	\left|
	\int_{\mathbb R^2}\rho(0)\phi\,dx
	\right|
	&\lesssim
	\lambda^{\frac{2}{p}-\beta-k}
	(\log\log\lambda)^{-\frac{\beta+2k}{2-\beta}}
	\lambda^{1-\frac{2}{p}}
	(\log\log\lambda)^{\frac{2}{2-\beta}}
	\|\phi\|_{\dot B^{k-1}_{p',1}}\\
	&\lesssim
	\lambda^{1-\beta-k}
	(\log\log\lambda)^{\frac{2-\beta-2k}{2-\beta}}
	\|\phi\|_{\dot B^{k-1}_{p',1}}.
\end{align*}
By the duality characterization of Besov spaces,
\[
\|\rho(0)\|_{\dot B^{1-k}_{p,\infty}}
\lesssim
\lambda^{1-\beta-k}
(\log\log\lambda)^{\frac{2-\beta-2k}{2-\beta}}.
\]
Moreover, directly from \eqref{initial-rho-<0},
\[
\|\rho(0)\|_{L^p}
\lesssim
\lambda^{-\beta}
(\log\log\lambda)^{-\frac{\beta}{2-\beta}}.
\]
Since \(k=-\lfloor\beta\rfloor+2\), we have \(1-k<\beta<0\). Interpolation between
\(\dot B^{1-k}_{p,\infty}\) and \(L^p\hookrightarrow \dot B^0_{p,\infty}\) yields
\begin{align*}
	\|\rho(0)\|_{\dot B^\beta_{p,1}}
	&\lesssim
	\|\rho(0)\|_{L^p}^{\frac{k-1+\beta}{k-1}}
	\|\rho(0)\|_{\dot B^{1-k}_{p,\infty}}^{\frac{-\beta}{k-1}}\\
	&\lesssim
	(\log\log\lambda)^{\frac{\beta}{2-\beta}}
	=
	(\log\log\lambda)^{-\frac{|\beta|}{2+|\beta|}}.
\end{align*}

		It remains to prove the lower bound at \(t=t^*\). Evaluating
	\eqref{app-rho-<0-t} at \(t=t^*\), we write
	\begin{align*}
		\bar\rho(t^*)
		&=
		\lambda^{\frac{2}{p}-\beta-k}
		(\log\log\lambda)^{-\frac{\beta+2k}{2-\beta}}
		\sum_{0\leq i\leq k}
		\lambda^{k-i} g^{(k-i)}(\lambda r)
		\sum_{m_1+2m_2+\cdots+im_i=i}
		C_{m_1,\cdots,m_i}\\
		&\quad\times
		\cos^{(m_1+\cdots+m_i)}(\theta)
		\prod_{1\leq j\leq i}
		\left(\lambda^j(\log\log\lambda)^{\frac{2}{2-\beta}}\right)^{m_j}
		\left(h^{(j)}(\lambda r)\right)^{m_j}\\
		&=
		\bar\rho_{\mathrm{lead}}(t^*)
		+
		\bar\rho_{\mathrm{low}}(t^*),
	\end{align*}
	where
	\[
	\bar\rho_{\mathrm{lead}}(t^*)
	=
	\lambda^{\frac{2}{p}-\beta}
	(\log\log\lambda)^{-\frac{\beta}{2-\beta}}
	g(\lambda r)
	\cos^{(k)}(\theta)
	\left(h'(\lambda r)\right)^k.
	\]
		As in the proof of Lemma \ref{app-rho-bound-s geq 0}, the lower-order part is
		smaller by a factor $(\log\log\lambda)^{-\frac{2}{2-\beta}}$. Hence the leading
		term yields
	\[
	\|\bar\rho(t^*)\|_{L^p}
	\gtrsim
	\lambda^{-\beta}
	(\log\log\lambda)^{-\frac{\beta}{2-\beta}}.
	\]
	At \(t=t^*\), the leading profile no longer contains the radial oscillatory phase.
	Therefore, using the preceding expansion directly, we also have the sharper bound
	\[
	\|\bar\rho(t^*)\|_{\dot B^1_{p,\infty}}
	\lesssim
	\lambda^{1-\beta}
	(\log\log\lambda)^{-\frac{\beta}{2-\beta}}.
	\]
	By interpolation,
	\[
	\|\bar\rho(t^*)\|_{L^p}
	\lesssim
	\|\bar\rho(t^*)\|_{\dot B^\beta_{p,\infty}}^{\frac{1}{1-\beta}}
	\|\bar\rho(t^*)\|_{\dot B^1_{p,\infty}}^{\frac{-\beta}{1-\beta}}.
	\]
	Consequently,
	\[
	\|\bar\rho(t^*)\|_{\dot B^\beta_{p,\infty}}
	\gtrsim
	(\log\log\lambda)^{-\frac{\beta}{2-\beta}}
	=
	(\log\log\lambda)^{\frac{|\beta|}{2+|\beta|}}.
	\]
	This completes the proof of the lemma.
\end{proof}

\subsection{Stability of Approximation}\label{sec:3.2}
We now compare the approximate solution with the exact inviscid Boussinesq solution
with the same initial data. By construction, the approximate pair
\((\bar w,\bar\rho)\) satisfies
\[
\partial_t\bar w+u[\bar w]\cdot\nabla\bar w=0,
\qquad
\partial_t\bar\rho+u[\bar w]\cdot\nabla\bar\rho=0.
\]
Writing \(\bar u=u[\bar w]\), we may equivalently express this system in velocity
form as
\begin{align*}
	\begin{cases}
		\partial_t \bar u+\bar u\cdot\nabla \bar u+\nabla \bar p
		=
		\bar\rho e_2+G,\\[1mm]
		\partial_t \bar\rho+\bar u\cdot\nabla\bar\rho=0,
	\end{cases}
\end{align*}
where
\[
G=-\bar\rho e_2.
\]
Indeed, the velocity equation for \(\bar u\) is generated only by the stationary
Euler dynamics of \(\bar w\), while the Boussinesq forcing \(\bar\rho e_2\) is not
included in the approximate velocity evolution and is therefore treated as an error.
Combining \eqref{rho-integer-positive} and
\eqref{rho-integer-negative}, we obtain, for every integer
\(s\geq 0\) and \(1\leq q\leq\infty\),
\begin{align}\label{estimate-G-inviscid}
	\|G(t)\|_{\dot W^{s,q}}
	\leq
	\|\bar\rho(t)\|_{\dot W^{s,q}}
	\lesssim
	\lambda^{s+\frac{2}{p}-\frac{2}{q}-\beta}
	(\log\log\lambda)^{\frac{2s-\beta}{2+|\beta|}},
\end{align}
for all \(0\leq t\leq t^*\).

Let \((u,\rho)\) be the exact solution of the inviscid Boussinesq system with the
same initial data:
\begin{align*}
	\begin{cases}
		\partial_t u+u\cdot\nabla u+\nabla p=\rho e_2,\\[1mm]
		\partial_t \rho+u\cdot\nabla\rho=0.
	\end{cases}
\end{align*}
We introduce the error variables
\[
\zeta\define \bar u-u,
\qquad
\xi\define \bar\rho-\rho .
\]
Subtracting the exact system from the approximate one gives
\begin{align}\label{pertur-system-inviscid}
	\begin{cases}
		\partial_t \zeta+\bar u\cdot\nabla \zeta-\zeta\cdot\nabla\zeta
		+\nabla(\bar p-p)
		=
		-\zeta\cdot\nabla\bar u+\xi e_2+G,\\[1mm]
		\partial_t \xi+\bar u\cdot\nabla \xi-\zeta\cdot\nabla\xi
		=
		-\zeta\cdot\nabla\bar\rho .
	\end{cases}
\end{align}
The pressure difference is determined by
\begin{align*}
	\bar p-p
	&=
	(-\Delta)^{-1}\diver\left(
	\bar u\cdot\nabla\zeta
	+\zeta\cdot\nabla\bar u
	-\zeta\cdot\nabla\zeta
	-\xi e_2-G
	\right).
\end{align*}
Since the two solutions have the same initial data,
\[
\zeta(0)=0,
\qquad
\xi(0)=0.
\]

The next lemma shows that the exact solution remains close to the approximate one
up to time \(t^*\).
For convenience, set $L=\log\log\lambda$. In the stability estimates below,
$\mathcal E_\lambda$ denotes a factor bounded by
\[
\mathcal E_\lambda\leq \exp(CL^2)=\lambda^{o(1)},
\]
where $C$ is independent of $\lambda$ and may increase from line to line.
\begin{lemma}\label{lem-error-estimate-inviscid}
	Let \(1<q\leq\infty\) and \(k\in\mathbb N_0\). Then, for all
	\[
	0<t\leq t^*
	=
	\lambda^{-\frac{2}{p}-1+\beta}\log\log\lambda,
	\]
	one has
	\begin{align}\label{error-estimate-short-time-inviscid}
		\|\nabla^k \xi(t),\nabla^k \zeta(t)\|_{L^q}
		\leq
		C\lambda^{k-1-\frac{2}{q}}
		\mathcal E_\lambda.
	\end{align}
\end{lemma}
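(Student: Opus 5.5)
Rather than estimating each $\|\nabla^k\xi,\nabla^k\zeta\|_{L^q}$ separately, I would run one high-order $L^2$-based energy estimate for the perturbation system \eqref{pertur-system-inviscid}, with norms rescaled to the natural length scale $\lambda^{-1}$. Then I would recover every $L^q$ bound ($1<q\le\infty$, any $k$) by Sobolev embedding and interpolation. The support of everything stays at distance $\sim\lambda^{-1}$ from the origin, but the error itself need not be compactly supported. So I would work with $H^N$ for a large integer $N$ and define the rescaled energy
\[
E(t)=\sum_{j=0}^{N}\lambda^{-2j}\big(\|\nabla^j\zeta(t)\|_{L^2}^2+\|\nabla^j\xi(t)\|_{L^2}^2\big).
\]
The target is
\[
E(t)^{1/2}\lesssim \lambda^{-1-1}\mathcal E_\lambda \qquad\text{for } t\le t^*,
\]
which is the case $k=0$, $q=2$ of \eqref{error-estimate-short-time-inviscid}. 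Given this, Gagliardo--Nirenberg with derivatives costing $\lambda$ each yields $\|\nabla^k\cdot\|_{L^q}\lesssim\lambda^{k-1-\frac2q}\mathcal E_\lambda$ for $2\le q\le\infty$ and $k\le N-2$. For $1<q<2$ one also needs spatial localization. I would handle this through a weighted bound, for instance on $\|(1+\lambda|x|)^{2}\nabla^j(\zeta,\xi)\|_{L^2}$, propagated the same way. Alternatively, one notes that $\xi$ is transported and remains compactly supported in a ball of radius $\lesssim\lambda^{-1}$ on $[0,t^*]$, provided $\|u\|_{L^\infty}t^*\lesssim\lambda^{-1}$; $\zeta$ is then controlled via its vorticity, which is also compactly supported.

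Next I would derive the differential inequality. Differentiate \eqref{pertur-system-inviscid} $j$ times and pair with $\nabla^j\zeta$ and $\nabla^j\xi$. Incompressibility kills the top-order transport terms $\bar u\cdot\nabla$ and $\zeta\cdot\nabla$, and the pressure drops out. The commutators are bounded by $\|\nabla\bar u\|_{W^{N,\infty}}$ and $\|\nabla\zeta\|_{L^\infty}$ times $E$. Corollary \ref{app-u-bound-s geq 0} gives $\|\nabla\bar u\|_{L^\infty}\lesssim\lambda^{1+\frac2p-\beta}L^{-\frac{|\beta|}{2+|\beta|}}$ (times $\lambda^j$ per extra derivative). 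The key observation is
\[
t^*\|\nabla\bar u\|_{L^\infty}\lesssim L\cdot L^{-\frac{|\beta|}{2+|\beta|}}\le L.
\]
The linear term $\zeta\cdot\nabla\bar\rho$ is the dangerous coupling. By Lemma \ref{app-rho-bound-s geq 0}, $\|\nabla^{m}\bar\rho\|_{L^\infty}\lesssim \lambda^{m+\frac2p-\beta}L^{\frac{2m-\beta}{2+|\beta|}}$. This contributes a growth rate of at most $\lambda^{1+\frac2p-\beta}L^{C}$ to the $\xi$-energy, coupled against the $\zeta$-energy. Over time $t^*$ the product is $\lesssim L^{C}$, hence at most $e^{CL^2}$ after Gronwall; this is exactly why $\mathcal E_\lambda=\exp(CL^2)$ is allowed. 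The source $\xi e_2+G$ in the $\zeta$ equation is integrated directly. From \eqref{estimate-G-inviscid},
\[
\int_0^{t^*}\|\lambda^{-j}\nabla^jG\|_{L^2}\,dt\lesssim t^*\lambda^{\frac2p-1-\beta}L^C=\lambda^{-2}L^{C},
\]
which matches the target size $\lambda^{-1-\frac22}$. The quadratic term $\zeta\cdot\nabla\zeta$ is absorbed by a bootstrap. Assume $E^{1/2}\le 2C_*\lambda^{-2}\mathcal E_\lambda$ on $[0,T]$. Then $\|\nabla\zeta\|_{L^\infty}\lesssim\lambda^{1-1}\mathcal E_\lambda=\lambda^{o(1)}$, and $t^*\lambda^{o(1)}\ll1$ because $\frac2p+1-\beta>0$. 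The nonlinearity is therefore harmless and the bootstrap closes.

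The main obstacle is the coupling in the Gronwall step. Its rates must be shown to be only $L^C/t^*$, with no genuine positive power of $\lambda$. Each derivative falling on $\bar\rho$ brings $\lambda L^{\frac{2}{2+|\beta|}}$, not just $\lambda$, and these factors compound across the $N$ levels. I expect to handle this by choosing the rescaling weight as $(\lambda L^{\frac{2}{2+|\beta|}})^{-j}$ rather than $\lambda^{-j}$, or else simply by accepting $L^{CN}$ losses inside $\mathcal E_\lambda$. In addition, the endpoint $q=\infty$ and the range $q<2$ require the localization argument described above. I would prove that first by checking that the perturbation stays supported, or at least concentrated, in $|x|\lesssim\lambda^{-1}$ up to time $t^*$.
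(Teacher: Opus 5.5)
Your treatment of $2\le q\le\infty$ is the paper's Steps 1, 2 and 4 in different packaging. The paper inducts on $k$ and feeds the already proved lower-order bounds into the Leibniz terms as sources. As a result, the only Gronwall rates are $\|\nabla\bar u\|_{L^\infty}+\|\nabla\bar\rho\|_{L^\infty}$ and the bootstrapped $\|\nabla\zeta,\nabla\xi\|_{L^\infty}$.

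A single energy achieves the same only with your weight $(\lambda L^{\frac{2}{2+|\beta|}})^{-j}$. With $\lambda^{-j}$, the factor $L^{CN}$ multiplies the Gronwall rate and you get $\exp(CL^{cN})$. That is still $\lambda^{o(1)}$, but it is not the $\mathcal E_\lambda\le\exp(CL^2)$ of the statement.

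The genuine difference is the range $1<q<2$. The paper runs an $L^\eta$ energy estimate, $1<\eta<2$. There transport still cancels, but the pressure does not, so it is bounded by Calder\'on--Zygmund. The only delicate term is $\bar u\cdot\nabla\zeta$, controlled by $\|\nabla\zeta\|_{L^2}\|\bar u\|_{L^{2\eta/(2-\eta)}}$ (this uses $u[f]\in C_c^\infty$). One then interpolates between $L^\eta$ and $\dot W^{m,2}$.

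Your localization route avoids any non-$L^2$ energy estimate and any pressure bound, and it shows the error stays at scale $\lambda^{-1}$. However, it relies on exact transport of $\rho$ and $w$. The paper's argument is insensitive to supports, which is what allows it to be reused in Corollary~\ref{coro-error-besov-fully}, where diffusion destroys compact support instantly.

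The localization route works only in your second version, and it needs several corrections.

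\textbf{The weighted option.} Drop it. $\zeta$ is not compactly supported, and its far field is a multipole tail that generically decays like $|x|^{-3}$. So $\|(1+\lambda|x|)^2\zeta\|_{L^2}$ is in general infinite, and weights also destroy the pressure cancellation.

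\textbf{Support of $\xi$.} It is $\rho$, not $\xi$, that is transported; $\xi(t)$ is supported in $\operatorname{supp}\rho(t)\cup\operatorname{supp}\bar\rho(t)$.

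\textbf{Displacement bound.} The condition $t^*\|u\|_{L^\infty}\lesssim\lambda^{-1}$ is false, since $t^*\|\bar u\|_{L^\infty}\sim\lambda^{-1}L^{\frac{2}{2+|\beta|}}$ is exactly the angular shear. Use instead that $\bar u$ is purely angular. Along trajectories $\frac{d}{dt}|X|=-\zeta(X)\cdot\mathbf e_r$, so $\bigl||X(t)|-|X(0)|\bigr|\le t^*\|\zeta\|_{L^\infty}\ll\lambda^{-1}$. Here $\|\zeta\|_{L^\infty}\lesssim\lambda^{-1}\mathcal E_\lambda$ comes from the $H^N$ step, so localization follows the energy estimate rather than preceding it. Also, $q=\infty$ needs no localization.

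\textbf{Control of $\zeta$.} The equation $\partial_tw+u\cdot\nabla w=\partial_1\rho$ shows that $\omega:=\bar w-w$ is supported in $B(0,R)$ with $R\sim\lambda^{-1}$. You must also use $\int\omega\,dx=0$, which follows from $\int_0^\infty rf\,dr=0$ and $\frac{d}{dt}\int w\,dx=\int\partial_1\rho\,dx=0$. Without it, $\zeta\sim|x|^{-1}$ is not even in $L^2$. With it, $|\nabla^k\zeta(x)|\lesssim R|x|^{-2-k}\|\omega\|_{L^1}$ for $|x|\ge2R$, and $\|\omega\|_{L^1}\lesssim R\|\nabla\zeta\|_{L^2}\lesssim\lambda^{-2}\mathcal E_\lambda$. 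Combined with H\"older on $B(0,2R)$ and on $\operatorname{supp}\xi$, this gives \eqref{error-estimate-short-time-inviscid} for all $1<q<2$.
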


\begin{proof}
	We first prove the estimates in \(L^2\), and then pass to general \(L^q\) by
	interpolation, together with a low-integrability estimate.
	
	\medskip
	\noindent{\bf Step 1. The \(L^2\) estimate.}
	Taking the \(L^2\) energy estimate for \eqref{pertur-system-inviscid} and using
	\(\operatorname{div}u=\operatorname{div}\bar u=0\), we obtain
	\begin{align}\nonumber
		\frac{d}{dt}\|\zeta,\xi\|_{L^2}
		&\lesssim
		\|\xi\|_{L^2}
		+\|G\|_{L^2}
		+\|\zeta\cdot\nabla\bar u\|_{L^2}
		+\|\zeta\cdot\nabla\bar\rho\|_{L^2} \nonumber\\
		&\lesssim
		\|\xi\|_{L^2}
		+\lambda^{\frac{2}{p}-1-\beta}
		(\log\log\lambda)^{\frac{-\beta}{2+|\beta|}} \nonumber\\
		&\quad
		+\|\zeta\|_{L^2}
		\lambda^{1+\frac{2}{p}-\beta}
		\left(
		\left(\log\log\lambda\right)^{-\frac{|\beta|}{2+|\beta|}}+	\left(\log\log\lambda\right)^{\frac{2-\beta}{2+|\beta|}}
		\right).
		\label{L2-norm-energy-inviscid}
	\end{align}
	Here we used \eqref{estimate-bar-u}, \eqref{estimate-bar-rho}, and
	\eqref{estimate-G-inviscid}.
		Since $0<t\leq t^*$ and
		\[
		1+\frac{2-\beta}{2+|\beta|}\leq2,
		\]
		we have
		\begin{align}
		&t\lambda^{1+\frac{2}{p}-\beta}
		\left(1+L^{-\frac{|\beta|}{2+|\beta|}}
		+L^{\frac{2-\beta}{2+|\beta|}}\right)
		\lesssim L+L^2, \label{exp-gronwall-bound-1-inviscid}\\
		&\exp\left[
		Ct\lambda^{1+\frac{2}{p}-\beta}
		\left(1+L^{-\frac{|\beta|}{2+|\beta|}}
		+L^{\frac{2-\beta}{2+|\beta|}}\right)
		\right]
		\leq \mathcal E_\lambda.\notag
		\end{align}
		Thus Gronwall's inequality gives
	\begin{align}\nonumber
		\|\zeta,\xi\|_{L^2}
		&\lesssim
		t\lambda^{\frac{2}{p}-1-\beta}
		(\log\log\lambda)^{\frac{-\beta}{2+|\beta|}}
		\mathcal E_\lambda \nonumber\\
		&\lesssim
		\lambda^{-2}
		\mathcal E_\lambda.
		\label{L2norm-error-inviscid}
	\end{align}
	This proves \eqref{error-estimate-short-time-inviscid} for \(k=0\) and \(q=2\).
	
	\medskip
	\noindent{\bf Step 2. Higher-order \(L^2\) estimates.}
	We proceed by induction on \(k\). Assume that the estimate has been proved for
	all integers \(0\leq k\leq k_0-1\), and consider \(k=k_0\). We impose the bootstrap
	assumption
		\begin{align}\label{bootstrap-assumption-inviscid}
			\|\nabla\zeta,\nabla\xi\|_{L^\infty}
			\leq
			\exp(C_0L^2),
		\end{align}
		where \(C_0\) will be fixed at the end.
	
	Applying \(\nabla^{k_0}\) to \eqref{pertur-system-inviscid} and taking the
	\(L^2\) energy estimate, we get
	\begin{align}\nonumber
		\frac{d}{dt}
		\|\nabla^{k_0}\zeta,\nabla^{k_0}\xi\|_{L^2}
		&\lesssim
		\|\nabla^{k_0}\xi\|_{L^2}
		+\|\nabla^{k_0}G\|_{L^2} \nonumber\\
		&\quad
		+\|\nabla^{k_0}(\zeta\cdot\nabla\bar u)\|_{L^2}
		+\|\nabla^{k_0}(\zeta\cdot\nabla\bar\rho)\|_{L^2} \nonumber\\
		&\quad
		+\left\|
		\nabla^{k_0}\big((\bar u-\zeta)\cdot\nabla\zeta\big)
		-(\bar u-\zeta)\cdot\nabla\nabla^{k_0}\zeta
		\right\|_{L^2} \nonumber\\
		&\quad
		+\left\|
		\nabla^{k_0}\big((\bar u-\zeta)\cdot\nabla\xi\big)
		-(\bar u-\zeta)\cdot\nabla\nabla^{k_0}\xi
		\right\|_{L^2}.
		\label{error-L2-norm-inviscid}
	\end{align}
	By \eqref{estimate-G-inviscid},
	\begin{align}\label{G-k0-estimate-inviscid}
		\|\nabla^{k_0}G\|_{L^2}
		\lesssim
		\lambda^{k_0-1+\frac{2}{p}-\beta}
		(\log\log\lambda)^{\frac{2k_0-\beta}{2+|\beta|}}.
	\end{align}
	Moreover, using \eqref{estimate-bar-u}, \eqref{estimate-bar-rho}, and
	\eqref{L2norm-error-inviscid},
	\begin{align}\nonumber
		&\|\nabla^{k_0}(\zeta\cdot\nabla\bar u)\|_{L^2}
		+
		\|\nabla^{k_0}(\zeta\cdot\nabla\bar\rho)\|_{L^2} \nonumber\\
		&\lesssim
		\|\zeta\|_{L^2}
		\left(
		\|\nabla^{k_0+1}\bar u\|_{L^\infty}
		+
		\|\nabla^{k_0+1}\bar\rho\|_{L^\infty}
		\right) \nonumber\\
		&\quad
		+
		\|\nabla^{k_0}\zeta\|_{L^2}
		\left(
		\|\nabla\bar u\|_{L^\infty}
		+
		\|\nabla\bar\rho\|_{L^\infty}
		\right) \nonumber\\
		&\lesssim
		\lambda^{k_0-1+\frac{2}{p}-\beta}
		\mathcal E_\lambda
		+
		\|\nabla^{k_0}\zeta\|_{L^2}
		\lambda^{1+\frac{2}{p}-\beta}
		\left(
	\left(\log\log\lambda\right)^{-\frac{|\beta|}{2+|\beta|}}+	\left(\log\log\lambda\right)^{\frac{2-\beta}{2+|\beta|}}
	\right).
		\label{product-k0-estimate-inviscid}
	\end{align}
	For the commutators, \eqref{bootstrap-assumption-inviscid} and
	\eqref{estimate-bar-u} imply
	\begin{align}\nonumber
		&\left\|
		\nabla^{k_0}\big((\bar u-\zeta)\cdot\nabla\zeta\big)
		-(\bar u-\zeta)\cdot\nabla\nabla^{k_0}\zeta
		\right\|_{L^2} \nonumber\\
		&\quad+
		\left\|
		\nabla^{k_0}\big((\bar u-\zeta)\cdot\nabla\xi\big)
		-(\bar u-\zeta)\cdot\nabla\nabla^{k_0}\xi
		\right\|_{L^2} \nonumber\\\nonumber
		&\lesssim
		\|\nabla^{k_0}\zeta,\nabla^{k_0}\xi\|_{L^2}
		\left(
		\lambda^{1+\frac{2}{p}-\beta}\left(\log\log\lambda\right)^{-\frac{|\beta|}{2+|\beta|}}
		+
		\exp(C_0L^2)
		\right)\\
&~~~~~~~~		+
		\lambda^{k_0+\frac{2}{p}-\beta}	\left(\log\log\lambda\right)^{-\frac{|\beta|}{2+|\beta|}}
		\|\nabla\zeta,\nabla\xi\|_{L^2}.
		\label{commutator-k0-estimate-inviscid}
	\end{align}
	Combining \eqref{G-k0-estimate-inviscid}, \eqref{product-k0-estimate-inviscid},
	and \eqref{commutator-k0-estimate-inviscid}, we obtain
	\begin{align}\nonumber
		&\frac{d}{dt}
		\|\nabla^{k_0}\zeta,\nabla^{k_0}\xi\|_{L^2}\\\nonumber
		&\lesssim
		\lambda^{k_0-1+\frac{2}{p}-\beta}
		\mathcal E_\lambda
		+
		\lambda^{k_0+\frac{2}{p}-\beta}	\left(\log\log\lambda\right)^{-\frac{|\beta|}{2+|\beta|}}
	\|\nabla\zeta,\nabla\xi\|_{L^2} \nonumber\\
		&\quad
		+
		\|\nabla^{k_0}\zeta,\nabla^{k_0}\xi\|_{L^2}
		\left(
		\lambda^{1+\frac{2}{p}-\beta}
		\left(
	\left(\log\log\lambda\right)^{-\frac{|\beta|}{2+|\beta|}}+	\left(\log\log\lambda\right)^{\frac{2-\beta}{2+|\beta|}}
	\right)
		+
		\exp(C_0L^2)
		\right).
		\label{k0-L2norm-inviscid}
	\end{align}
		Because $\beta-\frac{2}{p}<1$,
		\[
		t^*\exp(C_0L^2)
		=
		\lambda^{\beta-\frac{2}{p}-1+o(1)}L
		\longrightarrow0.
		\]
		Taking \(k_0=1\) and applying Gronwall's inequality therefore yields
	\begin{align}\label{nabla-zeta-xi-errorL2-inviscid}
		\|\nabla\zeta,\nabla\xi\|_{L^2}
		\lesssim
		\lambda^{-1}\mathcal E_\lambda.
	\end{align}
	For \(k_0>1\), substituting \eqref{nabla-zeta-xi-errorL2-inviscid} into
	\eqref{k0-L2norm-inviscid} and applying Gronwall once more gives
	\begin{align}\label{error-k0-L2-inviscid}
		\|\nabla^{k_0}\zeta,\nabla^{k_0}\xi\|_{L^2}
		\lesssim
		\lambda^{k_0-2}\mathcal E_\lambda.
	\end{align}
	This proves the \(L^2\) estimate for all \(k\in\mathbb N\).
	
	\medskip
	\noindent{\bf Step 3. A low-integrability estimate.}
	The case \(q>2\) will follow by interpolation from the \(L^2\) estimates. We
	therefore prove a separate estimate for \(1<q<2\). Let \(\eta\in(1,2)\). Taking the
	\(L^\eta\) estimate of \eqref{pertur-system-inviscid}, we obtain
	\begin{align}\nonumber
		\frac{d}{dt}\|\zeta,\xi\|_{L^\eta}
		&\lesssim
		\|\xi\|_{L^\eta}
		+\|G\|_{L^\eta}
		+\|\nabla(\bar p-p)\|_{L^\eta} \nonumber\\
		&\quad
		+\|\zeta\cdot\nabla\bar u\|_{L^\eta}
		+\|\zeta\cdot\nabla\bar\rho\|_{L^\eta}.
		\label{error-Leta-inviscid}
	\end{align}
	By the Calder\'on--Zygmund estimate and H\"older's inequality,
	\begin{align}\nonumber
		\|\nabla(\bar p-p)\|_{L^\eta}
		&\lesssim
		\|\bar u\cdot\nabla\zeta+\zeta\cdot\nabla\bar u-\zeta\cdot\nabla\zeta\|_{L^\eta}
		+\|\xi\|_{L^\eta}+\|G\|_{L^\eta}
		\nonumber\\
		&\leq
		\|\nabla\zeta\|_{L^2}
		\|\bar u\|_{L^{\frac{2\eta}{2-\eta}}}
		+
		\|\zeta\|_{L^\eta}
		\left(
		\|\nabla\bar u\|_{L^\infty}
		+
		\|\nabla\zeta\|_{L^\infty}
		\right)+\|\xi\|_{L^\eta}+\|G\|_{L^\eta}.
		\label{pressure-Leta-inviscid}
	\end{align}
	
	We also record the Lebesgue estimate for the approximate velocity. By the
	scaling of the Biot--Savart law,
	\[
	\bar u(t,x)
	=
	\lambda^{\frac{2}{p}-\beta}
	L^{-\frac{|\beta|}{2+|\beta|}}
	u[f](\lambda x),
	\qquad
	L=\log\log\lambda.
	\]
	Since \(u[f]\in C_c^\infty(\mathbb R^2)\), it follows that, for every
	\(1\leq r\leq\infty\),
	\begin{align}\label{bar-u-Lr-inviscid}
		\|\bar u(t)\|_{L^r}
		\lesssim
		\lambda^{\frac{2}{p}-\frac{2}{r}-\beta}
		L^{-\frac{|\beta|}{2+|\beta|}}.
	\end{align}
	In particular, taking
	\[
	r=\frac{2\eta}{2-\eta},
	\qquad
	\frac{2}{r}=\frac{2}{\eta}-1,
	\]
	and using \eqref{nabla-zeta-xi-errorL2-inviscid}, we obtain
	\[
	\|\nabla\zeta\|_{L^2}
	\|\bar u\|_{L^{\frac{2\eta}{2-\eta}}}
	\lesssim
	\lambda^{\frac{2}{p}-\frac{2}{\eta}-\beta}
	\mathcal E_\lambda.
	\]
	Using \eqref{bar-u-Lr-inviscid}, \eqref{estimate-bar-u},
	\eqref{estimate-G-inviscid}, \eqref{bootstrap-assumption-inviscid}, and
	\eqref{nabla-zeta-xi-errorL2-inviscid}, we infer
	\begin{align}\nonumber
		\frac{d}{dt}\|\zeta,\xi\|_{L^\eta}
		&\lesssim
		\lambda^{\frac{2}{p}-\frac{2}{\eta}-\beta}
		\mathcal E_\lambda \nonumber\\
		&\quad
		+
		\|\zeta,\xi\|_{L^\eta}
		\left(
	\lambda^{1+\frac{2}{p}-\beta}
	\left(
	\left(\log\log\lambda\right)^{-\frac{|\beta|}{2+|\beta|}}+	\left(\log\log\lambda\right)^{\frac{2-\beta}{2+|\beta|}}
	\right)
		+
		\exp(C_0L^2)
	\right).
		\label{Leta-differential-inviscid}
	\end{align}
	Gronwall's inequality gives
	\begin{align}\label{error-estimate-Leta-inviscid}
		\|\zeta,\xi\|_{L^\eta}
		\lesssim
		\lambda^{-1-\frac{2}{\eta}}
		\mathcal E_\lambda.
	\end{align}
	
	\medskip
	\noindent{\bf Step 4. General \(L^q\) estimates and closure of the bootstrap.}
		We interpolate separately below and above $L^2$. If $1<q<2$, choose
		$\eta=(q+1)/2$ and then an integer $m>k$ sufficiently large. Choose
		$0<\theta<1$ so that
		\[
		\frac1q-\frac{k}{2}
		=
		\frac{1-\theta}{\eta}
		+\theta\left(\frac12-\frac m2\right).
		\]
		The Gagliardo--Nirenberg inequality between $L^\eta$ and $\dot W^{m,2}$,
		together with \eqref{error-estimate-Leta-inviscid} and
		\eqref{error-k0-L2-inviscid}, gives the required estimate. If
		$2\leq q<\infty$, we instead choose $m$ and $\theta$ from
		\[
		\frac1q-\frac{k}{2}
		=
		\frac{1-\theta}{2}
		+\theta\left(\frac12-\frac m2\right)
		\]
		and interpolate between the $L^2$ and high-order $L^2$ bounds. In both cases,
		the interpolation relation gives
		\begin{align}\label{error-any-k-q-inviscid}
			\|\nabla^k\zeta,\nabla^k\xi\|_{L^q}
			\lesssim
			\lambda^{k-1-\frac{2}{q}}\mathcal E_\lambda.
		\end{align}
		For $q=\infty$, the same conclusion follows from the $q=4$ estimates and
		\[
		\|\nabla^k\zeta,\nabla^k\xi\|_{L^\infty}
		\lesssim
		\|\nabla^k\zeta,\nabla^k\xi\|_{L^4}^{1/2}
		\|\nabla^{k+1}\zeta,\nabla^{k+1}\xi\|_{L^4}^{1/2}.
		\]
		It remains only to verify \eqref{bootstrap-assumption-inviscid}. Taking \(q=4\) in
	\eqref{error-any-k-q-inviscid}, we get
	\[
	\|\nabla\zeta,\nabla\xi\|_{L^\infty}
	\lesssim
		\|\nabla^2\zeta,\nabla^2\xi\|_{L^4}^{\frac{1}{2}}
		\|\nabla\zeta,\nabla\xi\|_{L^4}^{\frac{1}{2}}
		\leq
		\mathcal E_\lambda.
		\]
		Choosing \(C_0\) sufficiently large, this improves the bootstrap bound.
		A standard continuity argument then shows that \eqref{bootstrap-assumption-inviscid}
		holds on \([0,t^*]\). The estimate \eqref{error-any-k-q-inviscid} is therefore valid
		on the whole interval, which proves \eqref{error-estimate-short-time-inviscid}.
		Running the argument on the maximal smooth existence interval, these estimates also
		control the continuation norms. Standard local existence and continuation for smooth
		inviscid Boussinesq solutions therefore extend the exact solution through $[0,t^*]$.
\end{proof}

As a consequence, we obtain the following Besov version of the error estimate. This
form will be used to compare the exact and approximate solutions in the norm inflation
space.

\begin{coro}\label{coro-error-besov-inviscid}
	For any $s>0$ and $1<q\leq\infty$, there holds
	\begin{align}\label{error-besov-inviscid}
		\|\zeta(t), \xi(t)\|_{\dot B^s_{q,1}}
		\lesssim
		\lambda^{s-1-\frac{2}{q}}
		\mathcal E_\lambda,
	\end{align}
	for $0<t\leq t^*$.
\end{coro}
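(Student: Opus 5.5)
The plan is to deduce the Besov estimate from the integer-order Lebesgue bounds of Lemma \ref{lem-error-estimate-inviscid} by interpolation, exactly as Lemma \ref{app-w-bound-s geq 0} and Lemma \ref{app-rho-bound-s geq 0} were derived from integer Sobolev bounds. Fix $s>0$ and $1<q\leq\infty$, and let $m=\lfloor s\rfloor+1$, so that $0<s<m$. Lemma \ref{lem-error-estimate-inviscid} with $k=0$ and $k=m$ gives, for $0<t\leq t^*$,
\[
\|\zeta(t),\xi(t)\|_{L^q}\leq C\lambda^{-1-\frac{2}{q}}\mathcal E_\lambda,
\qquad
\|\nabla^m\zeta(t),\nabla^m\xi(t)\|_{L^q}\leq C\lambda^{m-1-\frac{2}{q}}\mathcal E_\lambda .
\]

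Next I would pass these bounds into the Besov scale. The embeddings $L^q\hookrightarrow\dot B^0_{q,\infty}$ and $\dot W^{m,q}\hookrightarrow\dot B^m_{q,\infty}$ from Lemma \ref{lem-basic-besov-embeddings} are valid for all $1\leq q\leq\infty$, including $q=\infty$. Lemma \ref{lem-besov-interpolation} applied with $s_1=0$, $s_2=m$ and $\theta=1-\frac{s}{m}$ then yields
\[
\|\zeta(t),\xi(t)\|_{\dot B^s_{q,1}}
\lesssim
\|\zeta,\xi\|_{L^q}^{1-\frac{s}{m}}\,\|\nabla^m\zeta,\nabla^m\xi\|_{L^q}^{\frac{s}{m}}
\lesssim
\lambda^{(-1-\frac{2}{q})(1-\frac{s}{m})+(m-1-\frac{2}{q})\frac{s}{m}}\,\mathcal E_\lambda
=\lambda^{s-1-\frac{2}{q}}\mathcal E_\lambda .
\]
Here $\mathcal E_\lambda^{1-s/m}\mathcal E_\lambda^{s/m}=\mathcal E_\lambda$, which is consistent with the convention that $\mathcal E_\lambda\leq\exp(CL^2)$ with $C$ allowed to change from line to line. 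The $\dot W^{m,q}$ seminorm is a sum over all $m$-th partial derivatives, and it is controlled by $\|\nabla^m\cdot\|_{L^q}$.

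The argument is essentially routine. The only point needing care is that $\zeta,\xi$ must be admissible elements of the homogeneous Besov space, that is, tempered distributions with no polynomial ambiguity. This holds because $\zeta$ and $\xi$ lie in $L^q$ with $q<\infty$, or, when $q=\infty$, because they also belong to $L^2$ by Lemma \ref{lem-error-estimate-inviscid}. Consequently the low frequencies of the Littlewood--Paley decomposition converge and the interpolation inequality applies without modification.
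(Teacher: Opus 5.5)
Your proposal is correct and is essentially the paper's own proof: take $m=\lfloor s\rfloor+1$, pass the $k=0$ and $k=m$ bounds of Lemma \ref{lem-error-estimate-inviscid} into $\dot B^0_{q,\infty}$ and $\dot B^m_{q,\infty}$ via the embeddings, and apply Lemma \ref{lem-besov-interpolation}; the exponent computation $(-1-\frac2q)(1-\frac sm)+(m-1-\frac2q)\frac sm=s-1-\frac2q$ is right. Your extra remark that $\zeta,\xi$ are admissible elements of the homogeneous Besov space (because they lie in $L^2$) is a harmless detail that the paper leaves implicit.
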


\begin{proof}
	Let \(m=\lfloor s\rfloor+1\). By interpolation and the embeddings
	\(L^q\hookrightarrow \dot B^0_{q,\infty}\), 
	\(\dot W^{m,q}\hookrightarrow \dot B^m_{q,\infty}\), we have
	\begin{align*}
		\|\zeta(t), \xi(t)\|_{\dot B^s_{q,1}}
		&\lesssim
		\|\zeta(t), \xi(t)\|_{L^q}^{1-\frac{s}{m}}
		\|\nabla^m\xi(t),\nabla^m\zeta(t)\|_{L^q}^{\frac{s}{m}}.
	\end{align*}
	The desired estimate then follows from Lemma \ref{lem-error-estimate-inviscid} with
	\(k=0\) and \(k=m\).
\end{proof}

\subsection{Proof of Theorem \ref{thm}}\label{sec:3.3}

We now conclude the proof of norm inflation in the inviscid case. The initial
smallness follows from the estimates on the approximate velocity and density, while
the lower bound at the inflation time is obtained by combining the density growth with
the stability estimate.

\begin{proof}[Proof of Theorem \ref{thm}]
	Let \(\varepsilon>0\). We choose \(\lambda\gg1\) sufficiently large. Since the exact
	and approximate solutions have the same initial data,
	\[
	u_0=\bar u(0),
	\qquad
	\rho_0=\bar\rho(0).
	\]
	By Lemmas \ref{lem-bar-u-beta-estimate} and \ref{lem-rho-lower-upper-bound},
	\[
	\|u_0,\rho_0\|_{\dot B^\beta_{p,1}}
	\lesssim
	(\log\log\lambda)^{-\frac{|\beta|}{2+|\beta|}}.
	\]
	Hence, for \(\lambda\) sufficiently large,
	\[
	\|u_0,\rho_0\|_{\dot B^\beta_{p,1}}
<\varepsilon.
	\]
	
	We next prove the lower bound at the inflation time. Since
	\[
	-2<\beta-\frac{2}{p}<1,
	\]
		we may choose \(0<\delta<1\) so small that
		\[
		\sigma\define \beta-\frac{2}{p}+2-\delta>0,
		\qquad
		\frac{2}{2-\delta}\leq p.
	\]
	Set
	\[
	\tau\define \frac{2}{2-\delta}.
	\]
	Then
	\[
	\beta-\frac{2}{p}
	=
	\sigma-\frac{2}{\tau},
	\]
	and hence the Besov embedding gives
	\[
	\dot B^\sigma_{\tau,1}
	\hookrightarrow
	\dot B^\beta_{p,\infty}.
	\]
	By Corollary \ref{coro-error-besov-inviscid},
	\begin{align}\label{xi-error-final-inviscid}
		\|\zeta(t^*),\xi(t^*)\|_{\dot B^\beta_{p,\infty}}
		&\lesssim
		\|\zeta(t^*),\xi(t^*)\|_{\dot B^\sigma_{\tau,1}} \notag\\
		&\lesssim
		\lambda^{\sigma-1-\frac{2}{\tau}}
		\mathcal E_\lambda \notag\\
		&=
		\lambda^{\beta-\frac{2}{p}-1}
		\mathcal E_\lambda.
	\end{align}
	Since \(\beta-\frac{2}{p}<1\), the last quantity tends to zero faster than any
	negative power of \(\log\log\lambda\). Thus, for \(\lambda\) sufficiently large,
	\[
	\|\zeta(t^*),\xi(t^*)\|_{\dot B^\beta_{p,\infty}}
	\leq
	\frac{1}{10}
	(\log\log\lambda)^{-\frac{|\beta|}{2+|\beta|}}.
	\]
	
	Using Lemma \ref{lem-rho-lower-upper-bound}, we obtain
	\begin{align*}
		\|\rho(t^*)\|_{\dot B^\beta_{p,\infty}}
		&\geq
		\|\bar\rho(t^*)\|_{\dot B^\beta_{p,\infty}}
		-
		\|\xi(t^*)\|_{\dot B^\beta_{p,\infty}}\\
		&\gtrsim
		(\log\log\lambda)^{\frac{|\beta|}{2+|\beta|}}.
	\end{align*}
	After increasing \(\lambda\) if necessary, we get
	\[
	\|\rho(t^*)\|_{\dot B^\beta_{p,\infty}}
>
	\varepsilon^{-1}.
	\]
	
	We also record that the velocity component of the exact solution stays small in the
	same norm. Indeed, by Lemma \ref{lem-bar-u-beta-estimate} and
	\eqref{xi-error-final-inviscid},
	\[
	\|u(t^*)\|_{\dot B^\beta_{p,\infty}}
	\leq
	\|\bar u(t^*)\|_{\dot B^\beta_{p,1}}
	+
	\|\zeta(t^*)\|_{\dot B^\beta_{p,\infty}}
	\lesssim
	(\log\log\lambda)^{-\frac{|\beta|}{2+|\beta|}}.
	\]
	Taking \(\lambda\) larger if necessary, we have
	\[
	\|u(t^*)\|_{\dot B^\beta_{p,\infty}}
<\varepsilon.
	\]
	Thus the norm inflation is produced by the density component.
	
	Finally,
	\[
	t^*
	=
	\lambda^{-\frac{2}{p}-1+\beta}\log\log\lambda
	=
	\lambda^{\beta-\frac{2}{p}-1}\log\log\lambda
	\to0
	\qquad \text{as } \lambda\to\infty,
	\]
	because \(\beta-\frac{2}{p}<1\). Taking \(\lambda\) larger if necessary, we also have
	\(t^*<\varepsilon\). This proves norm inflation in arbitrarily short time and
	completes the proof.
\end{proof}
\section{The Fully Dissipative Case}\label{sec:4}

In this section, we prove the norm inflation result for the two-dimensional fully
dissipative Boussinesq system. We use the same profiles \(f,g\) and the same
approximate solution as in the inviscid case. The only new terms are the Laplacians,
which will be treated as additional errors.

\subsection{Approximate Solution and Stability of Approximation}\label{sec:4.1}

We first recall the approximate solution constructed in the inviscid case. The pair
\((\bar w,\bar\rho)\) satisfies
\[
\partial_t\bar w+\bar u\cdot\nabla\bar w=0,
\qquad
\partial_t\bar\rho+\bar u\cdot\nabla\bar\rho=0,
\qquad
\bar u=u[\bar w].
\]
In the velocity formulation, this can be written as
\[
\begin{cases}
	\partial_t\bar u+\bar u\cdot\nabla\bar u+\nabla\bar p
	=
	\bar\rho e_2+G,\\[1mm]
	\partial_t\bar\rho+\bar u\cdot\nabla\bar\rho=0,
\end{cases}
\]
where
\[
G=-\bar\rho e_2.
\]
When the same approximate solution is inserted into the fully dissipative system, the
Laplacians produce two additional error terms:
\[
\begin{cases}
	\partial_t \bar u-\Delta\bar u+\bar u\cdot\nabla\bar u+\nabla\bar p
	=
	\bar\rho e_2+G+G_{\mathrm{vis}},\\[1mm]
	\partial_t \bar\rho-\Delta\bar\rho+\bar u\cdot\nabla\bar\rho
	=
	R_{\mathrm{vis}},
\end{cases}
\]
with
\[
G_{\mathrm{vis}}=-\Delta\bar u,
\qquad
R_{\mathrm{vis}}=-\Delta\bar\rho.
\]

The estimates on the approximate solution obtained in the inviscid case remain
available here. By Lemma \ref{app-w-bound-s geq 0}, Corollary
\ref{app-u-bound-s geq 0}, and Lemma \ref{app-rho-bound-s geq 0}, for every
\(s>0\) and \(1\leq q\leq\infty\),
\begin{align}\label{estimate-bar-u-fully}
	\|\bar u(t)\|_{\dot B^s_{q,1}}
	\lesssim
	\lambda^{s+\frac{2}{p}-\frac{2}{q}-\beta} \left(\log\log \lambda\right)^{-\frac{|\beta|}{2+|\beta|}},
\end{align}
and
\begin{align}\label{estimate-bar-rho-fully}
	\|\bar\rho(t)\|_{\dot B^{s}_{q,1}}
	\lesssim
	\lambda^{s+\frac{2}{p}-\frac{2}{q}-\beta}
	(\log\log\lambda)^{\frac{2s-\beta}{2+|\beta|}},
\end{align}
for all \(0\leq t\leq t^*\). Moreover, Lemmas
\ref{lem-bar-u-beta-estimate} and \ref{lem-rho-lower-upper-bound} give
\begin{align}\label{bar-u-beta-estimate-fully}
	\|\bar u(t)\|_{\dot B^\beta_{p,1}}
	=
	\|\bar u(0)\|_{\dot B^\beta_{p,1}}
	\lesssim \left(\log\log \lambda\right)^{-\frac{|\beta|}{2+|\beta|}},
	\qquad 0\leq t\leq t^*,
\end{align}
and
\begin{align}\label{rho-initial-small-besov-fully}
	\|\rho(0)\|_{\dot B^\beta_{p,1}}
	=
	\|\bar\rho(0)\|_{\dot B^\beta_{p,1}}
	\lesssim
\left(\log\log \lambda\right)^{-\frac{|\beta|}{2+|\beta|}},
\end{align}
while at \(t=t^*\),
\begin{align}\label{rho-final-large-besov-fully}
	\|\bar\rho(t^*)\|_{\dot B^\beta_{p,\infty}}
	\gtrsim
	(\log\log\lambda)^{\frac{|\beta|}{2+|\beta|}}.
\end{align}

We now compare the approximate solution with the exact solution of the fully
dissipative Boussinesq system
\[
\begin{cases}
	\partial_t u-\Delta u+u\cdot\nabla u+\nabla p=\rho e_2,\\[1mm]
	\partial_t \rho-\Delta\rho+u\cdot\nabla\rho=0.
\end{cases}
\]
Let \((u,\rho)\) be the exact solution with the same initial data as
\((\bar u,\bar\rho)\), and set
\[
\zeta\define \bar u-u,
\qquad
\xi\define \bar\rho-\rho .
\]
Subtracting the exact system from the approximate one gives
\begin{align}\label{pertur-system-fully}
	\begin{cases}
		\partial_t\zeta-\Delta\zeta+\bar u\cdot\nabla\zeta-\zeta\cdot\nabla\zeta
		+\nabla(\bar p-p)
		=
		-\zeta\cdot\nabla\bar u+\xi e_2+G+G_{\mathrm{vis}},\\[1mm]
		\partial_t\xi-\Delta\xi+\bar u\cdot\nabla\xi-\zeta\cdot\nabla\xi
		=
		-\zeta\cdot\nabla\bar\rho+R_{\mathrm{vis}},
	\end{cases}
\end{align}
with
\[
\zeta(0)=0,
\qquad
\xi(0)=0.
\]
The pressure difference satisfies
\begin{align}\label{pressure-fully}
\bar p-p
=
(-\Delta)^{-1}\diver\left(
\bar u\cdot\nabla\zeta
+\zeta\cdot\nabla\bar u
-\zeta\cdot\nabla\zeta
-\xi e_2-G-G_{\mathrm{vis}}
\right).
\end{align}
Notice that $\diver G_{\mathrm{vis}}=0$, because $\bar u$ is divergence free.
Compared with the inviscid perturbation system, the only new source terms are
\(G_{\mathrm{vis}}\) and \(R_{\mathrm{vis}}\). 

We record the bounds for the forcing terms. From \eqref{estimate-bar-rho-fully},
\begin{align}\label{estimate-G-fully}
	\|G(t)\|_{\dot W^{s,q}}
	\leq
	\|\bar\rho(t)\|_{\dot W^{s,q}}
	\lesssim
	\lambda^{s+\frac{2}{p}-\frac{2}{q}-\beta}
	(\log\log\lambda)^{\frac{2s-\beta}{2+|\beta|}}.
\end{align}
Furthermore, by \eqref{estimate-bar-u-fully} and \eqref{estimate-bar-rho-fully},
\begin{align}\label{estimate-Gvis-fully}
	\|G_{\mathrm{vis}}(t)\|_{\dot W^{s,q}}
	\lesssim
	\|\bar u(t)\|_{\dot B^{s+2}_{q,1}}
	\lesssim
	\lambda^{s+2+\frac{2}{p}-\frac{2}{q}-\beta}	(\log\log\lambda)^{-\frac{|\beta|}{2+|\beta|}},
\end{align}
and
\begin{align}\label{estimate-Rvis-fully}
	\|R_{\mathrm{vis}}(t)\|_{\dot W^{s,q}}
	\lesssim
	\|\bar\rho(t)\|_{\dot W^{s+2,q}}
	\lesssim
	\lambda^{s+2+\frac{2}{p}-\frac{2}{q}-\beta}
	(\log\log\lambda)^{\frac{2s+4-\beta}{2+|\beta|}}.
\end{align}
Consequently,
\begin{align}\label{estimate-forcing-fully}
	\|G(t),G_{\mathrm{vis}}(t),R_{\mathrm{vis}}(t)\|_{\dot W^{s,q}}
	\lesssim
	\lambda^{s+2+\frac{2}{p}-\frac{2}{q}-\beta}
	(\log\log\lambda)^{\frac{2s+4-\beta}{2+|\beta|}},
\end{align}
for \(0\leq t\leq t^*\).

The viscous forcing terms contain two additional derivatives. The following proof
records how this changes the error bounds and where the stronger restriction
$\beta-\frac{2}{p}<-1$ is used.

\begin{coro}\label{coro-error-besov-fully}
	For any \(s>0\) and \(1<q\leq\infty\), one has
	\begin{align}\label{error-besov-fully}
		\|\xi(t),\zeta(t)\|_{\dot B^s_{q,1}}
		\lesssim
		\lambda^{s+1-\frac{2}{q}}
		\mathcal E_\lambda,
	\end{align}
	for all \(0<t\leq t^*\).
\end{coro}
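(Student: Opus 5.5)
The plan is to redo the proof of Lemma \ref{lem-error-estimate-inviscid} for the dissipative perturbation system \eqref{pertur-system-fully}. The aim is the integer-order bound
\[
\|\nabla^k\xi(t),\nabla^k\zeta(t)\|_{L^q}\lesssim \lambda^{k+1-\frac{2}{q}}\mathcal E_\lambda,\qquad 0<t\leq t^*,
\]
for $1<q\leq\infty$ and $k\in\mathbb N_0$. Once this is available, \eqref{error-besov-fully} follows exactly as in Corollary \ref{coro-error-besov-inviscid}: take $m=\lfloor s\rfloor+1$ and interpolate between $L^q\hookrightarrow\dot B^0_{q,\infty}$ and $\dot W^{m,q}\hookrightarrow\dot B^m_{q,\infty}$.

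\textbf{$L^2$ energy estimate.} Test \eqref{pertur-system-fully} against $(\zeta,\xi)$. The dissipative terms $\|\nabla\zeta,\nabla\xi\|_{L^2}^2$ have a good sign and are discarded. The transport terms vanish because $\bar u$ and $\zeta$ are divergence free, and the pressure term vanishes as well. What remains is
\[
\frac{d}{dt}\|\zeta,\xi\|_{L^2}\lesssim \|\xi\|_{L^2}+\|G,G_{\rm vis},R_{\rm vis}\|_{L^2}+\|\zeta\|_{L^2}\bigl(\|\nabla\bar u\|_{L^\infty}+\|\nabla\bar\rho\|_{L^\infty}\bigr).
\]
By \eqref{estimate-forcing-fully} with $s=0$, $q=2$, the forcing is bounded by $\lambda^{2+\frac2p-1-\beta}L^{C}$. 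The Gronwall factor is the same as in \eqref{exp-gronwall-bound-1-inviscid}, namely $\mathcal E_\lambda$. Multiplying by $t^*=\lambda^{-\frac2p-1+\beta}L$ gives $\|\zeta,\xi\|_{L^2}\lesssim\lambda^{0}\mathcal E_\lambda=\lambda^{1-\frac{2}{2}}\mathcal E_\lambda$. This is the inviscid bound multiplied by $\lambda^2$, which is exactly the gain the statement allows.

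\textbf{Higher derivatives and low integrability.} For higher derivatives, apply $\nabla^{k_0}$ and run the induction of Step 2 under the bootstrap \eqref{bootstrap-assumption-inviscid}. Everything is the same except that each forcing term picks up $\lambda^2$, which gives $\|\nabla^{k_0}(\zeta,\xi)\|_{L^2}\lesssim\lambda^{k_0}\mathcal E_\lambda$.

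The low-integrability estimate in $L^\eta$, $1<\eta<2$, also goes through. The heat semigroup is contractive on $L^\eta$, so the $-\Delta$ terms only help. In the pressure, $\operatorname{div}G_{\rm vis}=0$, so $G_{\rm vis}$ contributes nothing there. The term $\|\nabla\zeta\|_{L^2}\|\bar u\|_{L^{2\eta/(2-\eta)}}$ now gives $\lambda^{2+\frac2p-\frac2\eta-\beta}\mathcal E_\lambda$. After Gronwall this yields $\|\zeta,\xi\|_{L^\eta}\lesssim\lambda^{1-\frac2\eta}\mathcal E_\lambda$. The Gagliardo--Nirenberg interpolation of Step 4 then gives the $L^q$ bounds for $1<q<\infty$, and $q=\infty$ follows from the $L^4$ bounds as before.

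\textbf{Main obstacle: closing the bootstrap.} The delicate point is the bootstrap on $\|\nabla\zeta,\nabla\xi\|_{L^\infty}$. The new bounds only give $\|\nabla\zeta,\nabla\xi\|_{L^\infty}\lesssim\lambda^{2}\mathcal E_\lambda$, which is no longer $\lambda^{o(1)}$. So I would replace \eqref{bootstrap-assumption-inviscid} by $\|\nabla\zeta,\nabla\xi\|_{L^\infty}\leq\lambda^2\exp(C_0L^2)$. The commutator terms then contribute a Gronwall exponent of size
\[
t^*\lambda^2\exp(C_0L^2)=\lambda^{\beta-\frac2p+1+o(1)}L.
\]
This tends to $0$ precisely because $\beta-\frac2p<-1$, so this is where the stronger restriction of Theorem \ref{thm-1} enters. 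The exponential factor stays bounded, the improved bound $\lambda^2\mathcal E_\lambda$ closes the bootstrap for $C_0$ large, and continuation of smooth solutions extends everything to $[0,t^*]$.
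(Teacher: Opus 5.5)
Your proposal is correct and follows essentially the same route as the paper. Discard the diffusion in the $L^2$ and $L^\eta$ estimates, pick up an extra $\lambda^2$ from the viscous forcing at every derivative level, and interpolate as in Step 4 of Lemma \ref{lem-error-estimate-inviscid}; then close the modified bootstrap $\|\nabla\zeta,\nabla\xi\|_{L^\infty}\le\lambda^2\exp(C_0L^2)$ via $t^*\lambda^2\exp(C_0L^2)\to0$, which is exactly where $\beta-\frac{2}{p}<-1$ enters (as in the paper, that modified bootstrap should be imposed from the start of the higher-order estimates, not the inviscid one).
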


\begin{proof}
	Write $E=(\zeta,\xi)$. We run the estimates on a bootstrap interval on which
	\begin{align}\label{bootstrap-assumption-fully}
		\|\nabla E\|_{L^\infty}
		\leq
		\lambda^2\exp(C_0L^2).
	\end{align}
	The diffusion terms on the left-hand side of \eqref{pertur-system-fully} are
	nonnegative in the $L^2$ energy identities and may be discarded for upper bounds.
	Using \eqref{estimate-forcing-fully} with $s=0$ and $q=2$, the same product
	estimates as in the proof of Lemma \ref{lem-error-estimate-inviscid}, and
	\eqref{exp-gronwall-bound-1-inviscid}, we obtain
	\begin{align}\label{error-L2-fully}
		\|E(t)\|_{L^2}
		\lesssim
	\mathcal E_\lambda,
		\qquad 0\leq t\leq t^*.
	\end{align}
	Indeed, the largest source has size
	$\lambda^{1+\frac{2}{p}-\beta}$ up to powers of $L$, and its time integral is
	bounded by a power of $L$. For an integer $k\geq1$, differentiating the system
	$k$ times and using the standard commutator estimate gives inductively
	\begin{align}\label{error-high-L2-fully}
		\|\nabla^kE(t)\|_{L^2}
		\lesssim
		\lambda^k\mathcal E_\lambda.
	\end{align}
	Here the differentiated forcing is bounded by
	$\lambda^{k+1+\frac{2}{p}-\beta}$ up to powers of $L$; multiplication by $t^*$
	leaves $\lambda^k$ times a power of $L$. The terms involving the bootstrap norm do
	not enlarge the Gronwall factor, because
	\begin{align}\label{bootstrap-time-factor-fully}
		t^*\lambda^2\exp(C_0L^2)
		=
		\lambda^{\beta-\frac{2}{p}+1+o(1)}L
		\longrightarrow0
	\end{align}
	under the assumption $\beta-\frac{2}{p}<-1$.

For \(1<\eta<2\), we next derive the low-integrability estimate.
The Laplacian is dissipative in \(L^\eta\), and hence the diffusion terms
may again be discarded for an upper bound. Using the pressure formula \eqref{pressure-fully}, the
Calder\'on--Zygmund theorem, and H\"older's inequality as in Step~3 of
Lemma \ref{lem-error-estimate-inviscid}, we obtain
\begin{align*}
	\frac{d}{dt}\|E(t)\|_{L^\eta}
	&\lesssim
	\|G,G_{\mathrm{vis}},R_{\mathrm{vis}}\|_{L^\eta}
	+
	\|\nabla\zeta\|_{L^2}
	\|\bar u\|_{L^{\frac{2\eta}{2-\eta}}}
	\\
	&\quad+
	\|E(t)\|_{L^\eta}
	\left(
	1+\|\nabla\bar u\|_{L^\infty}
	+\|\nabla\bar\rho\|_{L^\infty}
	+\|\nabla E\|_{L^\infty}
	\right).
\end{align*}
Here \(G_{\mathrm{vis}}\) does not contribute to the pressure estimate because
\(\diver G_{\mathrm{vis}}=0\).

By \eqref{estimate-forcing-fully}, \eqref{error-high-L2-fully},
\eqref{bar-u-Lr-inviscid}, and the bootstrap assumption
\eqref{bootstrap-assumption-fully}, it follows that
\begin{align*}
	\frac{d}{dt}\|E(t)\|_{L^\eta}
	&\lesssim
	\lambda^{2+\frac{2}{p}-\frac{2}{\eta}-\beta}
	\mathcal E_\lambda
	\\
	&\quad+
	\|E(t)\|_{L^\eta}
	\Bigg[
	1+
	\lambda^{1+\frac{2}{p}-\beta}
	\left(
	L^{-\frac{|\beta|}{2+|\beta|}}
	+
	L^{\frac{2-\beta}{2+|\beta|}}
	\right)
	+
	\lambda^2\exp(C_0L^2)
	\Bigg].
\end{align*}
Indeed, since
\[
\|\nabla\zeta\|_{L^2}\lesssim\lambda\mathcal E_\lambda
\]
and
\[
\|\bar u\|_{L^{\frac{2\eta}{2-\eta}}}
\lesssim
\lambda^{1+\frac{2}{p}-\frac{2}{\eta}-\beta}
L^{-\frac{|\beta|}{2+|\beta|}},
\]
their product is bounded by the first term on the right-hand side.

Moreover, \eqref{exp-gronwall-bound-1-inviscid} and
\eqref{bootstrap-time-factor-fully} imply that
\[
t^*
\left[
1+
\lambda^{1+\frac{2}{p}-\beta}
\left(
L^{-\frac{|\beta|}{2+|\beta|}}
+
L^{\frac{2-\beta}{2+|\beta|}}
\right)
+
\lambda^2\exp(C_0L^2)
\right]
\lesssim L^2+o(1).
\]
Since
\[
t^*
\lambda^{2+\frac{2}{p}-\frac{2}{\eta}-\beta}
=
\lambda^{1-\frac{2}{\eta}}L,
\]
Gronwall's inequality gives
\begin{align}\label{error-low-fully}
	\|E(t)\|_{L^\eta}
	\lesssim
	\lambda^{1-\frac{2}{\eta}}
	\mathcal E_\lambda,
	\qquad
	0\leq t\leq t^*.
\end{align}
	Interpolating \eqref{error-low-fully}, \eqref{error-L2-fully}, and
	\eqref{error-high-L2-fully} exactly as in Step 4 of Lemma
	\ref{lem-error-estimate-inviscid}, we find, for $k\in\mathbb N_0$ and
	$1<q\leq\infty$,
	\begin{align}\label{error-any-k-q-fully}
		\|\nabla^kE(t)\|_{L^q}
		\lesssim
		\lambda^{k+1-\frac{2}{q}}\mathcal E_\lambda.
	\end{align}
	In particular, the $q=4$ estimates and the Gagliardo--Nirenberg inequality give
	\[
	\|\nabla E\|_{L^\infty}
	\lesssim
	\|\nabla E\|_{L^4}^{1/2}
	\|\nabla^2E\|_{L^4}^{1/2}
	\lesssim
	\lambda^2\mathcal E_\lambda.
	\]
	Choosing $C_0$ sufficiently large closes \eqref{bootstrap-assumption-fully} by
	continuity. Finally, interpolation between the estimates in
	\eqref{error-any-k-q-fully} gives
	\[
	\|E(t)\|_{\dot B^s_{q,1}}
	\lesssim
	\lambda^{s+1-\frac{2}{q}}\mathcal E_\lambda,
	\]
	which is \eqref{error-besov-fully}.
\end{proof}

\subsection{Proof of Theorem \ref{thm-1}}\label{sec:4.2}

We now conclude the proof of norm inflation in the fully dissipative case. The argument
is parallel to the inviscid case, except that the stability estimate loses two additional
derivatives. This loss is consistent with the parabolic scaling and is absorbed by the
condition \(\beta-\frac{2}{p}<-1\).

\begin{proof}[Proof of Theorem \ref{thm-1}]
	Let \(\varepsilon>0\). We choose \(\lambda\gg1\) sufficiently large. Since the exact
	and approximate solutions have the same initial data, \eqref{bar-u-beta-estimate-fully}
	and \eqref{rho-initial-small-besov-fully} give
	\[
	\|u_0,\rho_0\|_{\dot B^\beta_{p,1}}
	\lesssim
	(\log\log \lambda)^{-\frac{|\beta|}{2+|\beta|}}.
	\]
	Hence, for \(\lambda\) sufficiently large,
	\[
	\|u_0,\rho_0\|_{\dot B^\beta_{p,1}}
<\varepsilon.
	\]
	
	We next prove the lower bound at the inflation time. Since
	\[
	-2<\beta-\frac{2}{p}<-1,
	\]
		we may choose \(0<\delta<1\) so small that
		\[
		\sigma\define \beta-\frac{2}{p}+2-\delta>0,
		\qquad
		\frac{2}{2-\delta}\leq p.
	\]
	Set
	\[
	\tau\define \frac{2}{2-\delta}.
	\]
	Then
	\[
	\beta-\frac{2}{p}
	=
	\sigma-\frac{2}{\tau},
	\]
	and the Besov embedding gives
	\[
	\dot B^\sigma_{\tau,1}
	\hookrightarrow
	\dot B^\beta_{p,\infty}.
	\]
	By Corollary \ref{coro-error-besov-fully},
	\begin{align}\label{xi-error-final-fully}
		\|\zeta(t^*),\xi(t^*)\|_{\dot B^\beta_{p,\infty}}
		&\lesssim
		\|\zeta(t^*),\xi(t^*)\|_{\dot B^\sigma_{\tau,1}} \notag\\
		&\lesssim
		\lambda^{\sigma+1-\frac{2}{\tau}}
		\mathcal E_\lambda \notag\\
		&=
		\lambda^{\beta-\frac{2}{p}+1}
		\mathcal E_\lambda.
	\end{align}
	Since \(\beta-\frac{2}{p}<-1\), the last quantity tends to zero faster than any
	negative power of \(\log\log\lambda\). Thus, for \(\lambda\) sufficiently large,
	\[
	\|\zeta(t^*),\xi(t^*)\|_{\dot B^\beta_{p,\infty}}
	\leq
	\frac{1}{10}
	(\log\log\lambda)^{-\frac{|\beta|}{2+|\beta|}}.
	\]
	
	Using \eqref{rho-final-large-besov-fully}, we obtain
	\begin{align*}
		\|\rho(t^*)\|_{\dot B^\beta_{p,\infty}}
		&\geq
		\|\bar\rho(t^*)\|_{\dot B^\beta_{p,\infty}}
		-
		\|\xi(t^*)\|_{\dot B^\beta_{p,\infty}}\\
		&\gtrsim
		(\log\log\lambda)^{\frac{|\beta|}{2+|\beta|}}.
	\end{align*}
	After increasing \(\lambda\) if necessary, we get
	\[
	\|\rho(t^*)\|_{\dot B^\beta_{p,\infty}}
>
	\varepsilon^{-1}.
	\]
	
	We also record that the velocity component of the exact solution stays small in the
	same norm. Indeed, by \eqref{bar-u-beta-estimate-fully} and
	\eqref{xi-error-final-fully},
	\[
	\|u(t^*)\|_{\dot B^\beta_{p,\infty}}
	\leq
	\|\bar u(t^*)\|_{\dot B^\beta_{p,1}}
	+
	\|\zeta(t^*)\|_{\dot B^\beta_{p,\infty}}
	\lesssim
	(\log\log\lambda)^{-\frac{|\beta|}{2+|\beta|}}.
	\]
	Taking \(\lambda\) larger if necessary, we have
	\[
	\|u(t^*)\|_{\dot B^\beta_{p,\infty}}
< \varepsilon.
	\]
	Thus the norm inflation is produced by the density component.
	
	Finally,
	\[
	t^*
	=
	\lambda^{\beta-\frac{2}{p}-1}\log\log\lambda
	\to0
	\qquad \text{as } \lambda\to\infty,
	\]
		because \(\beta-\frac{2}{p}<-1\). Taking \(\lambda\) larger if necessary, we also have
	\(t^*<\varepsilon\). This proves norm inflation in arbitrarily short time and
	completes the proof.
\end{proof}


	\appendix

\section{Choice of the Radial Profiles}\label{app-choice-fg}
In this appendix, we prove Lemma \ref{lem-choice-fg-inviscid}, which constructs the
radial profiles used in the approximate solution.
\begin{proof}[Proof of Lemma \ref{lem-choice-fg-inviscid}]
	Choose nonnegative nonzero functions
	\[
	\psi_0\in C_c^\infty((1/2,1)),\qquad
	\psi_1\in C_c^\infty((4,5)),\qquad
	\psi_2\in C_c^\infty((6,7)).
	\]
	We set
	\[
	f(r)=\psi_0(r)+a_1\psi_1(r)+a_2\psi_2(r),
	\]
	where \(a_1,a_2\) are chosen so that
	\begin{align}\label{moment-condition-f}
		\int_0^\infty f(r)\,dr=0,
		\qquad
		\int_0^\infty r f(r)\,dr=0.
	\end{align}
	Let
	\[
	A_j=\int_0^\infty \psi_j(r)\,dr,
	\qquad
	B_j=\int_0^\infty r\psi_j(r)\,dr,
	\qquad j=1,2.
	\]
	Since \(\psi_1\) and \(\psi_2\) are supported in disjoint annuli separated from each
	other, the ratios \(\frac{B_1}{A_1}\) and \(\frac{B_2}{A_2}\) are different. Hence
	\[
	\det
	\begin{pmatrix}
		A_1 & A_2\\
		B_1 & B_2
	\end{pmatrix}
	\neq0,
	\]
	and the two linear conditions in \eqref{moment-condition-f} uniquely determine
	\(a_1,a_2\).
	
	We next check that \(f^{(-2)}\) is compactly supported. With the convention
	\[
	f^{(-1)}(r)=\int_{-\infty}^r f(s)\,ds,
	\qquad
	f^{(-2)}(r)=\int_{-\infty}^r f^{(-1)}(s)\,ds,
	\]
	the support condition on \(f\) implies that \(f^{(-1)}=f^{(-2)}=0\) for \(r\leq0\).
		On the other hand, for \(r\) sufficiently large, the first condition in
		\eqref{moment-condition-f} gives
	\[
	f^{(-1)}(r)=\int_0^\infty f(s)\,ds=0.
	\]
	Moreover, by Fubini's theorem, for such \(r\),
	\[
	f^{(-2)}(r)
	=
	\int_{-\infty}^r f^{(-1)}(s)\,ds
	=
	\int_0^r\int_0^s f(\tau)\,d\tau\,ds
	=
	\int_0^r (r-\tau)f(\tau)\,d\tau.
	\]
	Since \(r\) is larger than the support of \(f\), this becomes
	\[
	f^{(-2)}(r)
	=
	r\int_0^\infty f(\tau)\,d\tau
	-
	\int_0^\infty \tau f(\tau)\,d\tau
	=0
	\]
	by \eqref{moment-condition-f}. Thus \(f^{(-2)}\) vanishes both near \(0\) and for
	large \(r\). Since \(f\in C_c^\infty((0,\infty))\), we conclude that
	\[
	f^{(-2)}\in C_c^\infty((0,\infty)).
	\]
	
	Now choose
	\[
	g\in C_c^\infty((2,3)),
	\qquad
	g\not\equiv0.
	\]
	Then
	\[
	\operatorname{supp}f\cap\operatorname{supp}g=\emptyset .
	\]
	
It remains to verify the nondegeneracy of the angular shear on
\(\operatorname{supp}g\). Since \(f\) is radial, the velocity \(u[f]\) is purely
angular and \(u_\theta[f]\) depends only on \(r\). We compute it at the point with
polar coordinates \((r,0)\), namely \(x=(r,0)\), where
\(\mathbf e_\theta=(0,1)\). Writing \(y=\rho(\cos\alpha,\sin\alpha)\), the
Biot--Savart law gives
	\[
	u_\theta[f](r)
	=
	c\int_0^\infty\int_{-\pi}^{\pi}
	\frac{r-\rho\cos\alpha}
	{r^2+\rho^2-2r\rho\cos\alpha}
	f(\rho)\rho\,d\alpha\,d\rho ,
	\]
	where \(c\neq0\) is a dimensional constant.
	
	We now compute the angular integral. Since
	\[
	\frac{r-\rho\cos\alpha}
	{r^2+\rho^2-2r\rho\cos\alpha}
	=
	\frac12\partial_r
	\log\left(r^2+\rho^2-2r\rho\cos\alpha\right),
	\]
	it is enough to evaluate
	\[
	\int_{-\pi}^{\pi}
	\log\left(r^2+\rho^2-2r\rho\cos\alpha\right)\,d\alpha .
	\]
	Using
	\[
	r^2+\rho^2-2r\rho\cos\alpha
	=
	|r-\rho e^{i\alpha}|^2,
	\]
	we have
	\[
	\log\left(r^2+\rho^2-2r\rho\cos\alpha\right)
	=
	2\log|r-\rho e^{i\alpha}|.
	\]
	The classical mean value identity for the logarithmic potential gives
	\[
	\int_{-\pi}^{\pi}\log|r-\rho e^{i\alpha}|\,d\alpha
	=
	2\pi\log\max\{r,\rho\}.
	\]
	Hence
	\[
	\int_{-\pi}^{\pi}
	\log\left(r^2+\rho^2-2r\rho\cos\alpha\right)\,d\alpha
	=
	4\pi\log\max\{r,\rho\}.
	\]
	Differentiating in \(r\), we obtain
	\[
	\int_{-\pi}^{\pi}
	\frac{r-\rho\cos\alpha}
	{r^2+\rho^2-2r\rho\cos\alpha}
	\,d\alpha
	=
	\frac12\partial_r\left(4\pi\log\max\{r,\rho\}\right).
	\]
	If \(0<\rho<r\), then \(\max\{r,\rho\}=r\), and the last expression equals
	\[
	\frac12\partial_r(4\pi\log r)=\frac{2\pi}{r}.
	\]
	If \(\rho>r\), then \(\max\{r,\rho\}=\rho\), which is independent of \(r\), and the
	expression equals \(0\). Therefore
	\[
	\int_{-\pi}^{\pi}
	\frac{r-\rho\cos\alpha}
	{r^2+\rho^2-2r\rho\cos\alpha}
	\,d\alpha
	=
	\begin{cases}
		\dfrac{2\pi}{r}, & 0<\rho<r,\\[2mm]
		0, & \rho>r.
	\end{cases}
	\]
	Substituting this identity into the Biot--Savart formula yields
	\[
	u_\theta[f](r)
	=
	\frac{C}{r}\int_0^r \rho f(\rho)\,d\rho
	\]
	for some constant \(C\neq0\).
	
	We now restrict to \(2<r<3\). In this range, the support of \(\psi_0\) lies inside
	\((0,r)\), whereas the supports of \(\psi_1\) and \(\psi_2\) lie outside \(r\).
	Consequently,
	\[
	\int_0^r \rho f(\rho)\,d\rho
	=
	\int_0^\infty \rho\psi_0(\rho)\,d\rho
	\define M_0.
	\]
	Since \(\psi_0\geq0\) and \(\psi_0\not\equiv0\), we have \(M_0>0\). Thus, on
	\((2,3)\),
	\[
	h(r)=\frac{u_\theta[f](r)}{r}
	=
	\frac{CM_0}{r^2},
	\qquad
	h'(r)=-\frac{2CM_0}{r^3}.
	\]
	Since \(C\neq0\), \(M_0>0\), and \(\operatorname{supp}g\subset(2,3)\), there exists
	\(c_0>0\) such that
	\[
	|h'(r)|\geq c_0
	\qquad \text{on } \operatorname{supp}g .
	\]
	This completes the proof.
\end{proof}

				\section*{Acknowledgments}
			The authors thank Professor Changxing Miao for helpful discussions related to this work. Q. Chen was partially supported by National Natural Science Foundation of China (Grant No: 12471149).


\begin{thebibliography}{00}
\bibitem{b-BCD-b-bahouri-chemin-fourier-book-2011}
H. Bahouri, J. Chemin and R. Danchin, {\em Fourier analysis and nonlinear partial differential equations}, Grundlehren der mathematischen Wissenschaften, 343, Springer, Heidelberg, 2011.

\bibitem{b-BHL-strong-2d-infty-boussinesq-3dEuler}
R. Bianchini, L. E. Hientzsch and F. Iandoli, {\em Strong ill-posedness in \(L^\infty\) of the 2D Boussinesq equations in vorticity form and application to the 3D axisymmetric Euler equations}, SIAM J. Math. Anal. 56 (2024), 5915--5968.


\bibitem{b-BL-euler-2-illpose-bourgain-2015-IA}
J. Bourgain and D. Li, {\em  Strong ill-posedness of the incompressible Euler equation in borderline Sobolev spaces}, Invent. Math. 201 (2015), 97--157.				
				
\bibitem{b-BL-euler-4-illpose-bourgain-2021-IMRN}
J. Bourgain and D. Li, {\em  Strong ill-posedness of the 3D incompressible Euler equation in borderline spaces},  Int. Math. Res. Not. (2021), 12155--12264. 				

\bibitem{b-BP08-NS-B-1infty-illposed}
J. Bourgain and N. Pavlovi{\'c}, {\em Ill-posedness of the {Navier}-{Stokes} equations in a critical space in {3D}}, J. Funct. Anal. 255 (2008), 2233--2247.


\bibitem{c-chae-Euler-local-besov}
D. Chae, {\em Local existence and blowup criterion for the {Euler} equations in the {Besov} spaces}, Asymptotic Anal. 38 (2004), 339--358.

\bibitem{c-Global regularity-partial viscosity terms}
D. Chae, {\em Global regularity for the 2D Boussinesq equations with partial viscosity terms}, Adv. Math. 203 (2006), 497--513.

\bibitem{c-CN-local-inviscid}
D. Chae and H. Nam, {\em Local existence and blow-up criterion for the {Boussinesq} equations}, Proc. R. Soc. Edinb., Sect. A, Math. 127 (1997), 935--946.

\bibitem{c-CH-2021-boussinesq-half-plane}
J. Chen and T. Hou, {\em Finite time blowup of 2D Boussinesq and 3D Euler equations with \(C^{1, \alpha}\) velocity and boundary}, Commun. Math. Phys. 383 (2021), 1559--1667. 
		
\bibitem{c-CH-2022-boussinesq-arxiv-self-similar}
J. Chen and T. Hou, {\em Stable nearly self-similar blowup of the {2D} {Boussinesq} and {3D} {Euler} equations with smooth data {I}: {Analysis}}, Preprint, arXiv:2210.07191, 2022.

\bibitem{c-CMZ-SQG-Ck-Sobolev-nonexistence-22}
D. C\'{o}rdoba and L. Martínez-Zoroa, {\em Non existence and strong ill-posedness in {{\(C^k\)}} and {Sobolev} spaces for {SQG}}, Adv. Math. 407 (2022), Article ID 108570, 74 p.

\bibitem{c-CMZ-SQG-cmp-generalized-24}
D. C\'{o}rdoba and L. Martínez-Zoroa, {\em Non-existence and strong ill-posedness in {{\(C^{k, \beta}\)}} for the generalized surface quasi-geostrophic equation}, Commun. Math. Phys. 405 (2024), Paper No. 170, 53 p.

\bibitem{c-CMZ-SQG-annpde-fractional diffusion}
D. C\'{o}rdoba and L. Martínez-Zoroa, {\em Global unique solutions with instantaneous loss of regularity for {SQG} with fractional diffusion}, Ann. PDE 10 (2024), No. 2, Paper No. 21, 52 p.

\bibitem{c-CMO-euler-2d-Instantaneous gap loss of Sobolev regularity-24}
D. C\'{o}rdoba, L. Martínez-Zoroa and W. S. O{\.z}a{\'n}ski, {\em Instantaneous gap loss of Sobolev regularity for the 2D incompressible Euler equations}, Duke Math. J. 173 (2024), 1931--1971.

\bibitem{c-CLM-boussinesq-finitetime-singular-2d-Boussinesq}
D. C{\'o}rdoba, A. La{\'{\i}}n-Sanclemente and L. Mart{\'{\i}}nez-Zoroa, {\em Finite-time singularity via multi-layer degenerate pendula for the 2D Boussinesq equation with uniform \(C^{1, \sqrt{\frac{4}{3}} - 1 - \varepsilon} \cap L^2\) force}, Adv. Math. 480 (2025), Article ID 110480, 205 p.

\bibitem{e-EJ-euler-simper}
T. Elgindi and I. Jeong, {\em Ill-posedness for the incompressible {Euler} equations in critical {Sobolev} spaces}, Ann. PDE 3 (2017), Paper No. 7, 19 p. 

\bibitem{e-EJ-boussinesq-finitetime-jiao}
T. Elgindi and I. Jeong, {\em Finite-time singularity formation for strong solutions to the Boussinesq system}, Ann. PDE 6 (2020), Paper No. 5, 50 p. 

\bibitem{e-EM-elgindi2020infty}
T. Elgindi and N. Masmoudi, {\em  $L^\infty$ ill-posedness for a class of equations arising in hydrodynamics},  Arch. Ration. Mech. Anal. 235 (2020), 1979--2025.

\bibitem{g-GL-IMRN-SQG-critical}
D. Guo and X. Luo, {\em Norm inflation for the critical SQG equation}, Int. Math. Res. Not. IMRN (2026), no. 11, rnag108.

\bibitem{j-JMWZ-boussinesq-critical dissipation-14siam}
Q. Jiu, C. Miao, J. Wu and Z. Zhang, {\em The 2D incompressible Boussinesq equations with general critical dissipation}, SIAM J. Math. Anal. 46 (2014), 3426--3454.

\bibitem{k-KP-commutator-Euler}
T. Kato and G. Ponce, {\em Commutator estimates and the Euler and Navier-Stokes equations}, Comm. Pure Appl. Math. 41 (1988), 891--907.

\bibitem{k-KJ-Euler-simple illposed}
J. Kim and I. Jeong, {\em A simple ill-posedness proof for incompressible {Euler} equations in critical {Sobolev} spaces}, J. Funct. Anal. 283 (2022), Article ID 109673, 34 p.

\bibitem{l-LW-norm-boussinesq-B-1-inftyinfty}
Z. Li and W. Wang, {\em Norm inflation for the Boussinesq system}, Discrete Contin. Dyn. Syst., Ser. B 26 (2021), 5449--5463.

\bibitem{l-L-euler-10-illpose-lxyt-2024-arxiv}
X. Luo, {\em Ill-posedness of incompressible fluids in supercritical Sobolev spaces}, arXiv preprint, arXiv:2404.07813, 2024.

\bibitem{l-L-euler-101-illpose-lxyt-2025-arxiv}
X. Luo, {\em Sharp norm inflation for 3D Navier-Stokes equations in supercritical spaces}, arXiv preprint, arXiv:2504.08288, 2025.

\bibitem{m-phys-boussinesq-Atmosphere and Ocean}
A. J. Majda, {\em Introduction to PDEs and Waves for the Atmosphere and Ocean}, Courant Lect. Notes Math. 9, AMS, New York, 2003.

\bibitem{m-MB01-Vorticity and Incompressible Flow}
A. J. Majda and A. L. Bertozzi, {\em Vorticity and Incompressible Flow}, Cambridge University Press, Cambridge, 2002.

\bibitem{m-MY-euler-illposed}
G. Misiołek and T. Yoneda, {\em Local ill-posedness of the incompressible Euler equations in \(C^1\) and \(B^1_{\infty,1}\)}, Math. Ann. 364 (2016), 243--268.


\bibitem{s-SW-critical dissipation-2019-JAM}
A. Stefanov and J. Wu, {\em A global regularity result for the 2D Boussinesq equations with critical dissipation}, J. Anal. Math. 137 (2019), 269--290.

\bibitem{s-SWXY-fractional-bouss-2025-math ann}
A. Stefanov, J. Wu, X. Xu and Z. Ye, {\em Global regularity results of the 2D fractional Boussinesq equations}, Math. Ann. 391 (2025), 5965--6012.

\bibitem{v-phys-boussinesq-2017-large-scale}
G. K. Vallis, {\em Atmospheric and oceanic fluid dynamics. Fundamentals and large-scale circulation}, 2nd edition. Cambridge: Cambridge University Press, 2017.

\bibitem{w-Wang15-ill-NS-B-1}
B. Wang, {\em Ill-posedness for the Navier-Stokes equations in critical Besov spaces $\dot{B}^{-1}_{\infty,q}$}, Adv. Math. 268 (2015), 350--372.

\bibitem{y-Yoneda10-ill-NS-BMO-1}
T. Yoneda, {\em Ill-posedness of the 3D-Navier-Stokes equations in a generalized Besov space near $BMO^{-1}$}, J. Funct. Anal. 258 (2010), 3376--3387.

			\end{thebibliography}
			\end{document}